\newtheorem{theorem}{Theorem}[section]
\newtheorem{alphatheorem}{Theorem}[section]
\newtheorem{proposition}[theorem]{Proposition}
\newtheorem{claim}{Claim}
\newtheorem{lemma}[theorem]{Lemma}
\theoremstyle{definition}
\newtheorem{definition}[theorem]{Definition}
\newtheorem{remark}[theorem]{Remark}
\DeclareMathOperator{\an}{An}
\DeclareMathOperator{\Sing}{Sing}
\DeclareMathOperator{\Reg}{Reg}
\DeclareMathOperator{\Per}{Per}
\DeclareMathOperator{\Vol}{Vol}
\DeclareMathOperator{\dist}{dist}
\DeclareMathOperator{\spt}{spt}
\DeclareMathOperator{\Id}{Id}
\newcommand\res{\mathop{\hbox{\vrule height 7pt width .5pt depth 0pt
      \vrule height .5pt width 6pt depth 0pt}}\nolimits}
\numberwithin{equation}{section}
\title[Anisotropic minimal hypersurfaces]{Existence and regularity of min-max anisotropic minimal hypersurfaces}
\author{Guido De Philippis}
\address{Courant Institute of Mathematical Sciences, New York University, 251 Mercer St., New York, NY 10012, USA}
\email{guido@cims.nyu.edu}
\author{Antonio De Rosa}
\address{Department of Decision Sciences and BIDSA, Bocconi University, Milano, Italy}
\email{antonio.derosa@unibocconi.it}
\author{Yangyang Li}
\address{The University of Chicago, Department of Mathematics, Eckhart Hall,
5734 S University Ave, Chicago, IL 60637, USA}
\email{yangyangli@uchicago.edu}
\begin{document}

\begin{abstract}
  In any closed smooth Riemannian manifold of dimension at least three, we use the min-max construction to find anisotropic minimal hyper-surfaces with respect to elliptic integrands, with a singular set of codimension~$2$ vanishing Hausdorff measure. In particular, in a closed $3$-manifold, we obtain a smooth anisotropic minimal surface.

  The critical step is to obtain a uniform upper bound for density ratios in the anisotropic min-max construction. This confirms a conjecture by Allard [Invent. Math., 1983].

\end{abstract}

\maketitle

\section{Introduction}

\subsection*{Background and main results}

Minimal surfaces are critical points of the area functional. The existence of minimal surfaces in every Riemannian manifold has been a central research theme in Geometric Measure Theory over the past 50 years. When the topology of the ambient manifold is sufficiently rich, existence can be established via suitable minimization problems. However, if the topology is too simple, for example in a sphere,  minimization problems may only have trivial solutions, necessitating different techniques.

In~\cite{Almgren1962,Almgren1965} Almgren started a program to develop a geometric version of the Calculus of Variation in the large to show existence of (not necessarily minimizing) critical points for the area functional. In the case of codimension one surfaces, the program was completed by Pitts in~\cite{Pitts1981}, where he proved that every \(n+1\) Riemannian  manifold contains a non-trivial \(n\) dimensional minimal surface, as long as \(n+1\le 6\). Schoen and Simon~\cite{SchoenSimon} extended Pitts's technique to show that, in general, every \(n+1\) dimensional manifold admits a open \(n\)-dimensional minimal surface \(\Sigma\), whose singular set \(\overline{\Sigma}\setminus\Sigma\) is of  dimension at most \(n-7\), in particular matching the optimal regularity for solutions of the minimization problem. In recent years, Almgren-Pitts min-max techniques have received a renewed attention and have been a key tool in solving a series of long standing problems in Geometric Analysis~\cite{Marques-Neves2014,Marques-Neves2016, AMN2016, Song2023,Li2023}.

In this paper we are interested in anisotropic surface tensions of the form
\[
  \mathbf{\Phi}(\Sigma) = \int_\Sigma \Phi(x, \nu_\Sigma) d\mathcal{H}^{n}_g\,,
\]
where $\Phi$ is an even elliptic integrand defined on the unit tangent bundle of $M$,  $\nu_\Sigma$ is the unit normal vector of $\Sigma$ and \(\mathcal{H}^{n}_g\) is the \(n\)-dimensional volume measure associated with the Riemannian metric \(g\).  Extending the existence and regularity theory of minimal surfaces to critical points of anisotropic energies has been a central theme of research in Geometric Measure Theory, starting from their introduction in~\cite{Almgren68}. In spite of these effort, little progress has been done over the past 60 years. Indeed while, at least in codimension one, the existence and regularity  theory for minimizers of anisotropic energies  largely parallels the one of  the area functional~\cite{ottimaleASS}, a satisfactory theory is completely missing for what concerns critical points.

The main reason for this difference is the absence of any general way to deduce local area bounds for  critical points of anisotropic energies. These local bounds are indeed   a crucial ingredient in performing the local blow-up analysis which is at heart of the regularity theory for stationary varifolds,~\cite{Allard1972}. In the case of the area functional, these bounds are ensured by the validity of \emph{monotonicity formulas} which are a key tool in the study of minimal surfaces. Allard has however showed in~\cite{Allard1974} that the validity of a monotonicity type  formula essentially characterizes the area integrand. The analysis of critical points for anisotropic energies  thus proves to be extremely more challenging and requires the introduction of new ideas.

In~\cite{DePhilippis_DeRosa2024}, the first two named authors have been able to extend the Almgren-Pitts theory (as modified by Colding-De Lellis,~\cite{CDL2002}) to prove that in any \(3\)-dimensional manifold, one can always find a critical surface \(\Sigma\) for \(\Phi\) which is smooth, with the possible exception of one singular point. The dimensional restriction plays a crucial role in~\cite{DePhilippis_DeRosa2024}. Indeed, as mentioned above, in any regularity argument a key step is to show that critical points constructed via min-max enjoy local area bounds. To ensure these bounds, one uses that the min-max construction essentially ensures stability on the complement of the point, which in turn implies an \(L^2\) control on the second fundamental form. The \(L^2\) norm of the second fundamental form is a critical quantity for \(2\) dimensional surfaces, and one can use it to obtain a local control on the area.

In this paper we remove the dimensional restriction of~\cite{DePhilippis_DeRosa2024} and we prove the existence of critical points  for anisotropic energies in any dimension, in particular providing a positive answer to a conjecture posed by Allard in 1983,~\cite[Page 288]{Allard}:

\begin{alphatheorem}\label{thm:main1}
  Given any smooth closed Riemannian manifold $(M^{n+1}, g)$ with $n + 1\geq 3$ and any smooth elliptic integrand $\Phi$, there exists a smooth embedded anisotropic minimal hypersurface $\Sigma^n$ such that $\Sing(\Sigma) \equiv \overline{\Sigma}\setminus \Sigma$ has $\mathcal{H}^{n-2}(\Sing(\Sigma)) = 0$. In particular, when $n + 1 = 3$, there exists a smooth anisotropic minimal surface.
\end{alphatheorem}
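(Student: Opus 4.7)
The plan is to implement an Almgren--Pitts--Colding--De Lellis min-max scheme for the anisotropic functional $\mathbf{\Phi}$, building directly on the framework developed by the first two authors in~\cite{DePhilippis_DeRosa2024} but removing its reliance on two-dimensional conformal/Gauss--Bonnet-type estimates. Concretely, I would fix a saturated family of sweepouts in $(M,g)$, define the $\Phi$-min-max width $W(\Phi)>0$, and extract a critical minimizing sequence $\{\Sigma_k\}$ that is almost-minimizing in sufficiently small annuli in the sense of Pitts. The varifold limits of such sequences are stationary with respect to $\Phi$ and, by the one-sided minimization property, come with a built-in replacement/stability structure on the complement of finitely many bad points in each annulus. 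The ultimate output should be a stationary integral $\Phi$-varifold $V$ whose regular part is a smooth embedded $\Phi$-minimal hypersurface.

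The central difficulty, and the one flagged in the abstract, is to bypass the absence of a monotonicity formula and still obtain a uniform upper bound on the density ratios $\mathbf{\Phi}(\Sigma_k\cap B_r(x))/r^n$, independent of $k$, $x$, and $r$. My strategy would be to exploit the almost-minimizing property quantitatively: given $\Sigma_k$ inside a small ball, construct an explicit competitor by a radial contraction to $\partial B_r(x)$ composed with a graphical modification adapted to the local structure of $\Phi$, and then use the uniform ellipticity
\[
c_0\,\mathcal{H}^n_g(\Sigma)\ \le\ \mathbf{\Phi}(\Sigma)\ \le\ C_0\,\mathcal{H}^n_g(\Sigma)
\]
to convert the competitor bound into an $\mathcal{H}^n_g$ density bound on $\Sigma_k$ itself. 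The delicate point is that the competitor energy must be compared at the anisotropic level without benefiting from monotonicity, so one needs to iterate the comparison across dyadic scales while carefully tracking the annular almost-minimizing error; this iteration, rather than the existence scheme itself, is where I expect the main technical effort to go.

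With uniform density ratios in hand, standard compactness for $\Phi$-varifolds yields a stationary limit $V$ with locally finite mass and bounded density. At this point one can invoke the Allard-type $\varepsilon$-regularity for elliptic integrands (as developed by De Rosa and collaborators) together with the stability/almost-minimizing structure inherited from the min-max construction to produce an open set $\Sigma\subset\spt V$ on which $V$ is a smooth embedded $\Phi$-minimal hypersurface, and to guarantee that the regular set is nonempty by the width lower bound $W(\Phi)>0$.

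Finally, to bound $\Sing(\Sigma)=\overline{\Sigma}\setminus\Sigma$, I would run the standard Federer-style dimension reduction in the anisotropic setting: at each singular point, pass to a tangent cone, exploit its $\Phi_\infty$-stationarity and the density bound from the previous step to show that every tangent cone splits off at most $n-2$ Euclidean factors, and then upgrade the Hausdorff dimension bound $\dim_{\mathcal{H}}\Sing(\Sigma)\le n-2$ to the vanishing measure statement $\mathcal{H}^{n-2}(\Sing(\Sigma))=0$ via a quantitative stratification argument in the spirit of Naber--Valtorta. When $n=2$ this forces $\Sing(\Sigma)=\emptyset$, recovering the three-dimensional smoothness result of~\cite{DePhilippis_DeRosa2024} as a corollary.
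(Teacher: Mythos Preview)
Your proposal has a genuine gap at the central step. You plan to derive the uniform density-ratio bound a posteriori from the almost-minimizing property, by comparing $\Sigma_k\cap B_r(x)$ with a radial-contraction competitor and iterating over dyadic scales. This is precisely the approach that fails in the anisotropic setting: the almost-minimizing property only tells you that $\mathbf{\Phi}^c$ cannot be \emph{decreased} much along admissible paths, but to bound $\mathbf{\Phi}(\Sigma_k;B_r(x))$ from above you would need to connect $\Sigma_k$ to the cone competitor by a path along which the energy never rises by more than $\delta$, and there is no mechanism for this without monotonicity. The paper's solution is entirely different: the density bound is built into the sweepout itself, \emph{before} any min-max extraction, via the ONVP (optimal nested volume-parametrized) construction of Chambers--Liokumovich and Chodosh--Liokumovich--Spolaor. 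One modifies an optimal nested sweepout simplex-by-simplex on a sequence of refined triangulations, replacing ``bad'' slices in each simplex by one-sided minimizers, so that every slice $\Omega(t)$ satisfies $\mathbf{\Phi}(\Omega(t);B_r(p))\le C r^n$ uniformly in $t$ and $r$. The min-max varifold then inherits the bound as a limit of slices.

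Two further issues follow from this. First, the paper does \emph{not} run min-max directly for $\mathbf{\Phi}$; it works with $\mathbf{\Phi}^c$ for $c>0$, obtains an almost-embedded CMC hypersurface $\Sigma_c$ with multiplicity one (forced by the maximum principle when $c>0$), and only then lets $c\to 0$. Multiplicity one is essential for the replacement-gluing argument in Section~5, which is a barrier construction in the spirit of Hardt and De~Philippis--Maggi rather than Allard $\varepsilon$-regularity. Second, your proposed Federer dimension reduction and Naber--Valtorta stratification both rely on tangent objects being \emph{cones}, which requires a monotonicity formula; for anisotropic stationary varifolds, blow-ups exist (once you have the density bound) but are not known to be conical. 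The paper instead obtains $\mathcal{H}^{n-2}(\Sing\Sigma)=0$ from Allard's stable-regularity theorem (extended to allow small singular sets), applied to the replacements and then propagated through the compactness of Theorem~\ref{T:compact}; no stratification is used.
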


Note that  the regularity established for  $\Sigma$ in Theorem~\ref{thm:main1}  matches the one  for minimizers of the anisotropic Plateau problem for currents in codimension one ~\cite{ottimaleASS}, and, given the counterexample in~\cite{Morgan1990}, it is essentially optimal.

Theorem~\ref{thm:main1} is a consequence of  (a stronger version of) our  second main result, which concerns the existence of hypersurfaces with non-zero constant anisotropic mean curvature. We refer to the next section for the notion of almost embedded surface.

\begin{alphatheorem}\label{thm:main2}
  Given any smooth closed Riemannian manifold $(M^{n+1}, g)$ with $n + 1\geq 3$, any smooth elliptic integrand $\Phi$, and $c\in \mathbb{R}\setminus \{0\}$, there exists a smooth almost embedded hypersurface $\Sigma^n$ with constant anisotropic mean curvature $c$ with respect to $\Phi$, such that $\operatorname{Sing}(\Sigma) \equiv \overline{\Sigma}\setminus \Sigma$ has $\mathcal{H}^{n-2}(\operatorname{Sing}(\Sigma)) = 0$.
\end{alphatheorem}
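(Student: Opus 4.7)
The plan is to adapt the Zhou--Zhu constant mean curvature min-max scheme to the anisotropic setting, working with the energy
\[
\mathcal{A}^c(\Omega) := \mathbf{\Phi}(\partial^\ast \Omega) - c\,\Vol(\Omega)
\]
on Caccioppoli sets $\Omega \subset M$, whose critical points are precisely weak hypersurfaces of constant anisotropic mean curvature $c$. The first step is to run a one-parameter min-max in the space of Caccioppoli sets interpolating between $\emptyset$ and $M$, define the associated width $L_c$, and perform a pull-tight procedure to extract a critical varifold $V$ for $\mathcal{A}^c$ that is in addition almost minimizing on small annuli (with the Pitts-type condition adapted to the functional $\mathcal{A}^c$ rather than pure $\mathbf{\Phi}$-mass). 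Nontriviality of $V$ is ensured by $c \neq 0$ together with a standard isoperimetric comparison giving $L_c > \max\{0,\, c\,\Vol(M)\}$, so the min-max solution is not a constant sweepout collapsing to $\emptyset$ or $M$.

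The technical heart of the proof, and the dominant obstacle, is to establish a uniform upper bound on the density ratios
\[
\frac{\|V\|(B_r(x))}{r^n} \leq C, \qquad x \in M,\ 0 < r < r_0.
\]
Allard's rigidity result in~\cite{Allard1974} rules out a monotonicity formula for a general elliptic integrand $\Phi$, so the classical argument is unavailable. Instead, the strategy is to exploit almost minimality quantitatively: at a putative point where the density were very large, replace $V$ inside $B_r(x)$ by a solution of the anisotropic Plateau problem with the trace of $V$ on $\partial B_r(x)$ as boundary data. Ellipticity of $\Phi$ together with a $\Phi$-isoperimetric inequality control the competitor's anisotropic mass by $C r^n$, while the volume perturbation contributes only $O(r^{n+1})$ and is negligible at small scales; a sufficiently large density would then furnish an admissible deformation strictly cheaper than $V$, contradicting almost minimality. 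This is the step that confirms Allard's conjecture and I expect it to be the hardest part by a wide margin.

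Once the uniform density bound is in place, the remainder runs along largely standard lines. The almost minimizing property yields a one-sided stability for $\mathcal{A}^c$; combined with the density upper bound, an Allard-type $\varepsilon$-regularity theorem for critical points of elliptic integrands shows that $V$ is supported on a smooth almost embedded hypersurface $\Sigma$ away from a closed singular set, and the first variation of $\mathcal{A}^c$ identifies the anisotropic mean curvature on the regular part as the prescribed constant $c$. Federer-type dimension reduction, now applicable thanks to the density bound, together with the codimension-$7$ regularity for anisotropic Plateau minimizers from~\cite{ottimaleASS}, sharpens the size estimate to $\mathcal{H}^{n-2}(\Sing(\Sigma)) = 0$. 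Almost embeddedness near $\Sing(\Sigma)$ is then obtained from the anisotropic maximum principle applied to the one-sided minimizing structure inherited from the min-max construction.
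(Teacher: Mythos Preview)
Your overall framework (Zhou--Zhu style $\mathcal{A}^c$-min-max on Caccioppoli sets, almost minimizing on annuli, then regularity) matches the paper's, but the two places you identify as the core of the argument are precisely where your plan diverges from the paper and, in my reading, has genuine gaps.

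\textbf{The density bound.} Your proposed mechanism is circular. To replace $V$ in $B_r(x)$ by a Plateau filling with $\mathbf{\Phi}$-mass $\lesssim r^n$, you would need to control the ``boundary data'' $\partial^\ast\Omega \cap \partial B_r(x)$ (or the slice of $V$), and coning or any isoperimetric bound on the filling feeds on the $(n-1)$-mass of that slice---which you cannot bound without already knowing a density bound on $V$. The almost minimizing property only says you cannot \emph{decrease} $\mathcal{A}^c$ much along admissible paths; it gives you nothing to compare to unless you can exhibit a cheap competitor, and that is exactly what is unavailable here. The paper does \emph{not} extract the density bound from almost minimality of the limit. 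Instead it builds the bound into the sweepout \emph{before} taking the min-max limit: starting from an Optimal Nested Volume-Parametrized sweepout (Chambers--Liokumovich, Chodosh--Liokumovich--Spolaor), it inductively modifies the sweepout on a dyadic triangulation, replacing slices whose $\mathbf{\Phi}$-mass in a simplex is too large by one-sided minimizers, and then takes a diagonal limit. The nestedness is what makes these local replacements compatible with the sweepout structure. The mass ratio bound thus holds for every slice $\Omega(t)$ of the final sweepout, hence for any varifold in its critical set. This is the new idea and it is not visible in your outline.

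\textbf{Pull-tight and regularity.} Two further points. First, the paper explicitly avoids the standard pull-tight, because pull-tight could destroy the mass ratio bound just obtained; bounded first variation is instead deduced \emph{a posteriori} from almost minimality on annuli via a cut-off argument that itself uses the mass ratio bound (and $n\ge 2$). Second, your endgame via ``Allard-type $\varepsilon$-regularity plus Federer dimension reduction'' is not what the paper does and is problematic: Federer reduction needs tangent \emph{cones}, which in turn rely on a monotonicity-type formula that is unavailable for general $\Phi$. The paper instead runs the Pitts replacement machinery: it shows replacements exist with the same mass ratio bound, that in the interior they are smooth $c$-stable almost embedded hypersurfaces (via the anisotropic Schoen--Simon/Allard compactness for stable hypersurfaces with mass ratio bounds), and then glues successive replacements across annular boundaries using a barrier argument \`a la Hardt/De~Philippis--Maggi in place of the missing monotonicity. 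The bound $\mathcal{H}^{n-2}(\mathrm{Sing}\,\Sigma)=0$ comes out of that compactness theorem, not from dimension reduction.
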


We conclude the first part of this introduction by noticing that, as a corollary of the above results, we can prove that in every Finsler manifold there are minimal surfaces for the  Holmes–Thompson volume; see~\cite{AT04}.

\begin{alphatheorem}\label{t:existenceFinsler}
  Let $M$ be an $(n+1)$-dimensional smooth Finsler manifold  with $n + 1 \geq 3$, such that the norms on all tangent spaces $T_xM$ are uniformly convex. Then there is a nontrivial minimal hypersurface $\Sigma^n\subset M$ with respect to the Holmes–Thompson volume, without boundary, which is smooth away from a singular set $\Sing(\Sigma)$ with  $\mathcal{H}^{n-2}(\Sing(\Sigma)) = 0$. More generally, for every \(c>0\), there exists an almost embedded surface with mean curvature equal to \(c\).
\end{alphatheorem}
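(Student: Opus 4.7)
The plan is to deduce Theorem~\ref{t:existenceFinsler} directly from Theorems~\ref{thm:main1} and~\ref{thm:main2} by recognizing the Holmes--Thompson $n$-volume as the anisotropic energy associated with a smooth elliptic integrand relative to some auxiliary Riemannian metric. I would begin by fixing any smooth Riemannian metric $g$ on $M$, which exists by partition of unity. For a $C^1$ hypersurface $\Sigma\subset M$ with $g$-unit normal $\nu_\Sigma$, the Holmes--Thompson $n$-volume can then be rewritten as
\[
  \mathbf{\Phi}(\Sigma) \;=\; \int_\Sigma \Phi(x,\nu_\Sigma(x))\,d\mathcal{H}^n_g(x),
\]
where $\Phi(x,\nu)$ is defined, for $(x,\nu)$ in the $g$-unit tangent bundle, as the Holmes--Thompson area density of the hyperplane $\nu^\perp\subset T_xM$ with respect to the Finsler norm $F(x,\cdot)$, rescaled by the $g$-induced hyperplane area element. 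Smoothness and positivity of $\Phi$ follow from the corresponding properties of $F$, and evenness follows from the intrinsic symmetry of the Holmes--Thompson construction in $\nu$.

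The main step is to verify that $\Phi$ is elliptic in the sense adopted throughout the paper. Here I would invoke the dual-ball representation of the Holmes--Thompson area (cf.~\cite{AT04}): up to a universal constant, $\Phi(x,\nu)$ equals the Lebesgue $n$-volume of the projection of the dual Finsler unit ball $B^*_x\subset T^*_xM$ onto the hyperplane $\nu^\perp$, identified with $\mathbb{R}^n$ via $g$. The uniform convexity hypothesis on $F(x,\cdot)$ is equivalent to strict convexity together with $C^{1,1}$ regularity of $B^*_x$, which by a standard convex-geometry argument forces these hyperplane projection volumes to depend on $\nu$ as a smooth, $1$-homogeneous, strictly convex function---precisely the ellipticity condition required to invoke Theorems~\ref{thm:main1} and~\ref{thm:main2}.

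Once ellipticity is in hand, the conclusions are immediate. Theorem~\ref{thm:main1} yields a nontrivial smooth embedded hypersurface $\Sigma$ that is minimal for $\Phi$, and hence for the Holmes--Thompson volume, with $\mathcal{H}^{n-2}(\Sing(\Sigma))=0$; Theorem~\ref{thm:main2} furnishes, for each prescribed $c\neq 0$, an almost embedded hypersurface of constant anisotropic $\Phi$-mean curvature $c$ with the same regularity. Non-triviality in the minimal case is ensured by the strict positivity of the min-max value produced in the proof of Theorem~\ref{thm:main1}. I expect the ellipticity verification to be the only genuine obstacle; the remaining portion of the argument amounts to unwinding definitions and quoting the main theorems.
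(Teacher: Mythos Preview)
Your proposal is correct and matches the paper's approach: the paper does not write out a separate proof of Theorem~\ref{t:existenceFinsler} but presents it as an immediate corollary of Theorems~\ref{thm:main1} and~\ref{thm:main2}, citing~\cite{AT04} for the fact that the Holmes--Thompson volume is an elliptic anisotropic integrand. Your sketch of the ellipticity verification via the projection-body representation of the Holmes--Thompson density is exactly the content supplied by that reference, so you have in fact written out more detail than the paper itself provides.
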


\subsection*{Strategy of the proof and structure of the paper}

The general strategy for proving Theorem~\ref{thm:main1} and~\ref{thm:main2} is based on the Almgren-Pitts min-max construction which we now briefly summarize. Relying on the Almgren isomorphism~\cite{Almgren1962}, one shows that there always exist nontrivial \emph{one-parameter} sweepouts of \(M\) by a one parameter family of codimension one cycles. This sets up a mountain pass geometry for the area functional, from which, together with a simple pull tight procedure, one easily shows the existence of a stationary varifold which realizes the min-max value. However, stationary varifolds are known to be smooth only on the complement of a possibly big set. To prove the existence of a regular minimal surface, one has to further rely on the variational construction of the min-max critical point. Indeed, the construction roughly ensures that its Morse index is at most one, which essentially implies that it is stable in the neighborhood of every point in its support, with the possible exception of one. The key idea of Pitts was to upgrade this stability to an almost minimizing property,~\cite{Pitts1981}, and through various delicate estimates, ultimately prove its regularity.

For anisotropic energies, the same idea can be employed to prove the existence of a \(\Phi\)-stationary varifold that is \(\Phi\) almost  minimizing. However, in this general setting, no regularity theory for such surfaces is known. Once again, the obstruction is the absence of a local area control, which prevents the use of crucial blow-up arguments.

The new key idea here is to use the min-max construction to show that we can indeed produce a \(\Phi\)-almost  minimizing varifold \(V\) that also satisfies the desired area control. In order to obtain these local area bounds, we rely on the notion of \emph{nested} sweepouts introduced in~\cite{Chambers-Liokumovich2020} and~\cite{Chodosh-Liokumovich-Spolaor2022}. This nested property, roughly speaking, allows us to prove that the resulting critical point is \emph{one-sided} minimizing, which, by comparison with small balls, would ensure a local energy bound (though the actual argument is however more involved).

Once a local area bound is achieved, one can attempt to run Pitts' argument to show the regularity of the constructed varifold. The theory is based on the notion of \emph{replacement}. More precisely one can prove that for any small annuli \(A\), there exists a globally stationary varifold \(V^*\), which coincides with \(V\) outside \(A\) and that is a smooth and stable \(\Phi\)-minimal surface in \(A\).
The goal is to show that they globally coincide. To achieve this, one performs a local blow up at the (exterior) boundary of \(A\) (though again the actual argument is more involved) and  attempts to leverage the fact that the limiting surface is indeed globally stationary to show that the tangent planes of \(V\) and \(V^*\) are the same across this boundary. In the area case, one relies on the monotonicity formula, which implies that the blow up is indeed a cone; this, together with the global stationarity, leads to the desired conclusion. In our case, however, we must instead rely on a careful barrier construction, inspired by the one in~\cite{DPM1} (see also~\cite{Hardt77} for similar arguments). In order to carry out these arguments, we need to ensure that the constructed varifold has multiplicity one. This is why, as in~\cite{DePhilippis_DeRosa2024}, we first construct varifold with \(\Phi\) mean curvature equal to a positive constant \(c>0\). The maximum principle indeed ensures that for such surfaces, the multiplicity is \(1\), except on a set of dimension \((n-1)\). We then pass to the limit as \(c\to 0\) to complete the proof.

A final remark concerns the proof of the smoothness of the replacement \(V^*\) in \(A\). By construction, it is obtained as the limit of globally stable (in \(A\)) surfaces, which are  local minimizers at decreasing scales. These minimizers are smooth by the classical theory developed in~\cite{ottimaleASS}, but one has to prove global a priori estimates to pass to the limit. For the area functional, these estimates have been proved by Schoen-Simon in~\cite{SchoenSimon}, also allowing for a small singular set. For anisotropic energies, Allard has extended these estimates in~\cite{Allard}, \emph{provided one knows a priori a local area bound} (to be precise, Allard does not allow for a singular set, but this requires only minor changes). We conclude by noticing that a new simplified proof of~\cite{SchoenSimon} has been recently been given by Bellettini in~\cite{Bel2023}. In a forthcoming work, we plan to extend that result to the setting of anisotropic energies.

\medskip

The paper is structured as follows: in Section~\ref{sec:preliminaries} we collect few standard fact in GMT together with the main properties of anisotropic energies we will need in the sequel. In Section~\ref{sec:sweepouts} we construct a volume parameterized optimal sweepout with  bounded area, in Section~\ref{sec: Alomost minimizing varifolds and unifrom mass ratio} we prove the existence of an almost minimizing varifold with bounded mass ratio, while in Section~\ref{sec:Regularity of min-max minimal hypersurfaces} we prove its regularity. Eventually in Section~\ref{sec:mainproofs} we combine these results to prove the main theorems. Appendix~\ref{sec:compactness} contains the main compactness we exploit in the paper, while Appendix~\ref{sec:geometric} contains the proof of a simple geometric lemma.

\subsection*{Acknowledgments}
Antonio De Rosa was funded by the European Union: the European Research Council (ERC), through StG ``ANGEVA'', project number: 101076411. Guido De Philippis is supported by the NSF grant DMS 2055686 and by the Simons Foundation. Yangyang Li was partially supported by the AMS-Simons travel grant.

Views and opinions expressed are however those of the authors only and do not necessarily reflect those of the European Union or the European Research Council. Neither the European Union nor the granting authority can be held responsible for them.

\section{Preliminaries}\label{sec:preliminaries}

\subsection{Terminology}
Throughout the paper, we fix a smooth closed (i.e.\ compact and without boundary) \((n+1)\)-dimensional Riemannian manifold $(M^{n+1}, g)$ with \((n+1)\ge 3\). Here by ``smooth'', we mean that both the manifold and the metric are \(C^\infty\). While this is definitely not the most optimal assumption, a careful analysis of the arguments below will show that \(C^3\) regularity would suffice. However, since this is not a salient point, we will not pursue this refinement further. We also note that the fixed background Riemannian metric plays essentially no role in the following, since it can be absorbed into the integrand. It is however useful to fix a background volume form and to identify \(n\) dimensional planes with their normals.

We assume the reader to be familiar with the standard notions in Geometric Measure Theory,~\cite{Simon1984}, and we will adapt the following notations and conventions.
\begin{itemize}
\item $UT(M)$:  the unit tangent bundle of $M$, namely,
  \[
    UT(M) := \{(x, v) \in TM: \|v\|_g = 1\}\,;
  \]
\item $G_n(M)$:  the unoriented hyperplane bundle of $M^{n+1}$, namely,
  \[
    G_n(M) := \{(x, T): x \in M, T \text{ is an $n$-dimensional linear subspace of } T_x M\}\,.
  \]
 By means of the background metric, we can identify \(G_n(M)\) with \( UT(M)/\sim\), where we have set the equivalence relation \((x,v) \sim (x, -v)\).
\item $\mathrm{inj}(M)$: the injective radius of $(M, g)$.
\item $\mathcal{X}(M)$: the space of smooth vector fields on $M$.
\item $\mathcal{C}(M)$: the space of Caccioppoli sets (sets of finite perimeter) in $M$, endowed with the topology induced by the \(L^1\) distance of the characteristic functions. In particular, two sets in $\mathcal{C}(M)$ are considered identical if they differ by an $\mathcal{H}^{n+1}$ measure zero set. We denote the reduced boundary of a Caccioppoli set $\Omega\in \mathcal{C}(M)$ by $\partial^* \Omega$.
\item $\mathcal{Z}^0_n(M; \mathbb{Z}_2)$: the space of modulo two $n$-cycles on $M$ in the connected component containing $0$, endowed with the flat metric $\mathcal{F}$ topology;
\item $\mathcal{V}(M) = \mathcal{V}_n(M^{n+1})$: the space of $n$-varifolds on $M$, namely, the space of non-negative Radon measure on $G_n(M)$;
\item $\mathcal{RV}(M) = \mathcal{RV}_n(M^{n+1})$: the space of $n$-rectifiable varifolds on $M$;
\item $\mathcal{IV}(M) = \mathcal{IV}_n(M^{n+1})$: the space of integral $n$-rectifiable varifolds on $M$;
\item $|K|:=\mathcal{H}^n\res K \otimes \delta_{T_xK}$: the integral varifold associated to an $n$-rectifiable set $K$, or to a modulo two $n$-cycle $K$;
\item $\|V\|, \|T\|$: the associated Radon measure on $M$ of $V \in \mathcal{V}(M)$ and $T \in \mathcal{Z}^0_n(M; \mathbb{Z}_2)$;
\item  $T_x^r : z\in B_1(0)\to \exp_x(rz) \in B_r(x)$, where $\exp_x$ denotes the exponential map at the point $x$, for every $x\in M$ and $r<\mathrm{inj}(M)$. We set  $\eta_{x,r}(y):=(T_x^r)^{-1}(y)$;
\item $TV(x,V)$: the set of all the sub-sequential limits, as $r \to 0$, of $(\eta_{x,r})_\#V \in \mathcal V(\mathbb R^{n+1})$, where $x\in M$ and $V \in \mathcal V(M)$;
\item \(TV(x,\Omega)\) the set of all the sub-sequential limits, as  $r \to 0$, of $\eta_{x,r}(\Omega) \in \mathcal{C}(M)$, where $x\in M$ and $\Omega \in \mathcal{C}(M)$.
\end{itemize}

\subsection{Anisotropic energies}

In the sequel, an anisotropic integrand will refer to a smooth function $\Phi: G_n(M) \to \mathbb{R}^+$. Note that according to the identification above, we can consider it as a function \(\Phi: U(TM)\to \mathbb{R}^+\), which is an even function in the second variable. When no confusion arises, we will often switch between these points of view without explicitly  acknowledging it. It will also be useful to extend \(\Phi\) one homogeneously in the second variable as
\[
\Phi(x, v)=|v|\Phi\Big(x, \frac{v}{|v|}\Big).
\]
In the sequel, we will \emph{always assume} that this extension has been made whenever we consider derivatives of \(\Phi\).

We will say that the integrand is \emph{elliptic} if the map \(v \mapsto \Phi(x, v)\) is convex, and we will say the integrand is \emph{uniformly elliptic} if the same map is uniformly convex in all directions except the radial one. In order to quantify these properties, we fix a parameter \(\lambda\) and make the following assumptions:
\begin{equation}\label{limitato}\tag{$H_\lambda$}
      \frac{1}{\lambda}\leq \Phi \vert_{U(TM)} \leq \lambda \qquad   \|\Phi\big\|_{C^3(UT(M))} \leq \lambda
       \qquad
      D^2_v  \Phi\vert_{UT(M)}\geq \frac{\mathrm{Id}_{v^\perp}}{\lambda}.
\end{equation}
For a varifold $V \in \mathcal{V}(M)$, we define its $\Phi$-anisotropic energy as
\[
  \mathbf{\Phi}(V) =  \int_{G_n (M)} \Phi(x, T) dV(x, T)\,.
\]
Note that by the identification of the \(n\)-dimensional Grassmanian with the unit sphere, we can equally thing an \(n\)-dimensional varifold as a measure on \(UT(M)\). In that case we will write:
\[
  \mathbf{\Phi}(V) =  \int_{UT (M)} \Phi(x, \nu) dV(x, \nu)\,.
\]
For a modulo two $n$-cycle $S \in \mathcal{Z}^0_n(M; \mathbb{Z}_2)$ or an $n$-rectifiable set $S$, we will often use the lighter notation
\[
  \mathbf{\Phi}(S) := \mathbf{\Phi}(|S|) = \int_{S} \Phi(x, T_x S) d\mathcal{H}_{g}^n(x)\,.
\]
When \(S=\partial^{*}\Omega\), where \(\Omega\) is a set of finite perimeter, we will also make the following abuse of notation, when no confusion arises:
\[
  \mathbf{\Phi}(\Omega) := \mathbf{\Phi}(|\partial^{*}\Omega|) = \int_{\partial ^{*} \Omega} \Phi(x, \nu_\Omega(x)) d\mathcal{H}_{g}^n(x)\,.
\]
When we will need to localize the integration to a Borel subset \(U\subset M\), we will use the notation:
\begin{equation}\label{e:loc}
  \mathbf{\Phi}(V;U) :=  \int_{G_n (U)} \Phi(x, T) dV(x, T)\,,
\end{equation}
where \(G_n (U)\) is the restriction of the Grassmannian bundle to \(U\). Note that we are not making any assumption on \(U\), although most of the time, \(U\) will be either an open set or the closure of an open set. We use a similar notation to localize the $\Phi$-anisotropic energy of a cycle or of a set of finite perimeter.

\begin{remark}\label{rmk:Phi-norm}
  On the vector space $\mathcal{Z}^0_n(M; \mathbb{Z}_2)$, $\mathbf{\Phi}$ is in fact a norm, and any two integrands satisfying~\eqref{limitato} induce the same topology, which is finer than the flat topology,~\cite{Almgren1965,Pitts1981}.

\end{remark}

The \emph{first variation} of the $\Phi$-anisotropic energy is defined by
\[
  \delta_\mathbf{\Phi} V(X) := \frac{d}{dt}\Bigl |_{t = 0} \mathbf{\Phi}\left((\varphi_t)_\# V\right)
\]
where $V \in \mathcal{V}(M)$, $X \in \mathcal{X}(M)$ and $\varphi_t$ is defined by $\frac{d\varphi_t}{dt} = X$ and $\varphi_0 = \operatorname{Id}$.  Referring to~\cite{DPDRGrect} for the general expression, here we record that when \(M=\mathbb R^{n+1}\) and \(\Phi(x,\nu)=\Phi(\nu)\) does not depends on \(x\), we have the following formula:
\begin{equation}
\label{e:firstvariation}
 \delta_\mathbf{\Phi} V(X)=\int \Phi (\nu)\operatorname{div} X-\langle D\Phi(\nu), DX^T\nu \rangle dV(x,\nu)
\end{equation}
where \(DX^T\) is the transpose of \(DX\). In the general case where  \(\Phi\) depends on \(x\) as well, one need to add a term which depends on \(D_x\Phi\), \cite{DDH}. For a general varifold, the following estimates always holds true:
\begin{equation}
    \label{e:firstvaritationroughbound}
    |\delta_\mathbf{\Phi} V(X)|\le C(\lambda)\|V\|\bigl(\spt X)(\|DX\|_{\infty}+\|X\|_{\infty}\bigr)
\end{equation}
A varifold $V \in \mathcal{V}(M)$ is said to have locally bounded $\Phi$-anisotropic first variation if there exists $C > 0$ such that for all $X \in \mathcal{X}(M)$,
\[
  |\delta_{\mathbf{\Phi}} V(X)| \leq C \|X\|_\infty.
\]
Equivalently, an $n$-varifold $V \in \mathcal{V}(M)$ has locally bounded $\Phi$-anisotropic first variation if \(\delta_{\mathbf{\Phi}} V\) is a Radon measure. By the Radon-Nikodym theorem, there exists an $L^1(\|V\|)$ vector $H^V_\Phi$ and a vector-valued measure \(\eta_V\), \(\eta_V\perp \|V\|\), such that for all $X \in \mathcal{X}(M)$,
\[
  \delta_\mathbf{\Phi} V(X) = - \int_M X \cdot H^V_\Phi d\|V\|+\int_M X \cdot d\eta_V.
\]
In this case, $H^V_\Phi$ is called the \emph{generalized $\Phi$-anisotropic mean curvature} of $V$.  In case \(\eta_V=0\) and \( |H^V_\Phi|\le C\),  we say that $V$ has \emph{$\Phi$-anisotropic first variation bounded by} $C$.

We will say that  $V$ is \emph{$\Phi$-stationary} if $\delta_\mathbf{\Phi} V \equiv 0$. In the case that a \(\Phi\)-stationary varifold is associated with a smooth, properly embedded hypersurface \(\Sigma\), we will say that \(\Sigma\) is \(\Phi\)-minimal. We refer to~\cite{SurveyDeRosa} for a survey on the theory of anisotropic minimal surfaces.

\subsection{Anisotropic CMC hypersurfaces}\label{sec:cmc}
In the sequel, we will work with anisotropic constant mean curvature (CMC) hypersurfaces. We will use the tools from~\cite[Section 2.4]{DePhilippis_DeRosa2024}, which is the anisotropic counterpart of~\cite[Section 2]{Zhou-zhu2020}. For any non-negative constant $c \in [0, \infty)$, we define the following energy:
\begin{align*}
  \mathbf{\Phi}^c: \mathcal{C}(M) &\to \mathbb{R},
  \\
  \Omega &\mapsto \mathbf{\Phi}(\Omega) - c\operatorname{Vol}(\Omega).
\end{align*}
and we will use the notation~\eqref{e:loc} for its localization to an open set \(U\). In case where $\partial\Omega$ is a smooth surface, the first variation of $\mathbf{\Phi}^c$ is given by
\begin{equation}\label{1stvFc}
  \delta_{\mathbf{\Phi}^c}\Omega(X)=\int_{\partial\Omega}(h_\Phi^{\partial \Omega}-c)\langle X, \nu \rangle \, d\mathcal H^n, \qquad \forall X\in \mathcal X(M),
\end{equation}
where $\nu$ and $h_\Phi^{\partial \Omega}$ denote, respectively, the outward unit normal on $\partial \Omega$ and the $\Phi$-anisotropic mean curvature of $\partial \Omega$ with respect to $\nu$. Note that $h_\Phi^{\partial \Omega}=-\langle H_\Phi^{|\partial \Omega|},\nu \rangle$, with the notation introduced in the previous section.

From~\eqref{1stvFc} we observe that any critical point $\Omega$ of $\mathbf{\Phi}^c$ satisfies  $h_\Phi^{\partial \Omega}\equiv c$. We can also compute the second variation $\delta^2_{\mathbf{\Phi}^c}\Omega(X,X)$ at a critical point $\Omega$, see~\cite[Appendix A.1]{FigalliMaggi},~\cite[Section 1.5, Page 295]{Allard} or~\cite[Lemma A.5]{GuidoFrancesco} for the explicit computation. As it is well known, the second variation along critical points only depends on the normal component \(\varphi=\langle X, \nu \rangle\) and we will abuse our notation by writing $\delta^2_{\mathbf{\Phi}^c}\Omega(\varphi,\varphi)$. The formula obtained for $\delta^2_{\mathbf{\Phi}^c}\Omega(\varphi,\varphi)$ at a critical point $\Omega$ does not depend on $c$, and it can also be applied to oriented hypersurfaces $\Sigma$ that are not necessarily closed. In such cases, we will use the notation $S^\Sigma_\Phi(\varphi, \varphi)$.

\begin{definition}
Let $\Sigma$ be an immersed, smooth, two-sided hypersurface  with unit normal vector $\nu$, and let $U\subset M$ be an open set. We will say that $\Sigma$ is a \emph{$c$-stable hypersurface} in $U$ if the anisotropic mean curvature with respect to $\nu$ satisfies $-\langle H_\Phi^{|\partial \Omega|}, \nu \rangle=h_\Phi^{\Sigma}\equiv c$ in $U$, and $S^\Sigma_\Phi(\varphi, \varphi)\geq 0$  for all  $\varphi\in C^\infty(\Sigma)$ with $\operatorname{spt} (\varphi)\subset \Sigma\cap U$. If $c=0$, we simply say that $\Sigma$ is a  stable hypersurface.
\end{definition}

Although the precise  formula of the second variation is important in~\cite{Allard}, on which the compactness Theorem~\ref{T:compact} is based, in the rest of the proof we never need its explicit formula, but only the following consequence, cf.~\cite[Lemma A.5]{GuidoFrancesco}:  if \(\Sigma\) is \(c\)-stable in \(U\), there exists a constant \(C=C(M,g,\lambda)\) such that
 \begin{equation}
     \label{eq:stability}
     \int_\Sigma \varphi^2|A_\Sigma|^2\le C\int_\Sigma |\nabla \varphi|^2+\varphi^2 \qquad \text{for all \(\varphi\in C_c^1(\Sigma\cap U)\)},
 \end{equation}
 where \(A_\Sigma\) denotes the second fundamental form of \(\Sigma\).

We will also need the notion of \emph{almost embedded minimal surface}, as introduced in~\cite{ZhouZhu2019}.
\begin{definition}
  Consider an open subset $U\subset M$, and a smooth \(n\) dimensional (possibly open) manifold  $\Sigma$. A smooth immersion $\psi: \Sigma\rightarrow U$ is an \emph{almost embedding} if, for every $p\in\psi(\Sigma)$ where $\psi$ fails to be an embedding, there exists a neighborhood $B\subset U$ of $p$, such that
  \begin{itemize}
  \item $\Sigma\cap \psi^{-1}(B)=\cup_{i=1}^k \Sigma_i$, where $\Sigma_i$ are disjoint connected components;
  \item $\psi(\Sigma_i)$ is an embedding for every $i=1,\dots,k$;
  \item for each $i$, every $\psi(\Sigma_j)$, with $j\neq i$, lies on one side of $\psi(\Sigma_i)$ in $B$.
  \end{itemize}
  In this case, we identify $\psi(\Sigma)$ with $\Sigma$ and $\psi(\Sigma_i)$ with $\Sigma_i$, and we say that $\Sigma$ is almost embedded.
\end{definition}

For a smooth almost embedded $c$-stable hypersurface $\Sigma$, we define the \textit{touching set} $\mathcal{S}(\Sigma)$ of $\Sigma$ as the set of points of $\Sigma$ where it fails to be embedded. By the maximum principle argument in~\cite[Lemma 2.7]{ZhouZhu2019}, \(\mathcal{S}(\Sigma)\) is locally the union  of two tangential smooth hypersurfaces with constant anisotropic mean curvature. We further denote with $\mathcal{R}(\Sigma)$ the set of points of $\Sigma$ where $\Sigma$ is locally smooth and embedded.

We define the regular set $\operatorname{Reg}(\Sigma):=\mathcal{S}(\Sigma)\cup \mathcal{R}(\Sigma)$. From the proof of~\cite[Lemma 5.1]{DePhilippis_DeRosa2024}, we deduce that $\mathcal{S}(\Sigma)$ can be covered with a finite number of balls, in which $\mathcal{S}(\Sigma)$ is an $(n-1)$-dimensional $C^1$  graph in $\Sigma$. We denote the singular set by $\operatorname{Sing}(\Sigma):=\overline \Sigma \setminus \operatorname{Reg}(\Sigma)$.

An important class of sets we will be dealing with in the sequel are those that satisfy the following property:
\begin{definition}[mass ratio upper bound]
  A Caccioppoli set $\Omega \in \mathcal{C}(M)$ is said to have a \emph{mass ratio upper bound} $C \in (0, \infty)$, if, for any $p \in M$, $r \in (0, \mathrm{inj}(M)/2)$, the following holds:
  \begin{equation*}
    \mathbf{\Phi}(\Omega;B_r(p)) \leq C r^n\,.
  \end{equation*}
\end{definition}
Note that, in view of~\eqref{limitato}, this is equivalent to \(\Per(\Omega;B_{r}(p))\lesssim r^{n}\).

A key role will be played by the following compactness theorem, which is an easy consequence of Theorem~\ref{compactness} below.

\begin{theorem}\label{T:compact}
  Given an open set \(U\) and a sequence of almost embedded, $c_k$-stable hypersurfaces $\Sigma_k\subset U$ having a mass ratio upper bound $C$, and such that
  \begin{itemize}
  \item[-] $\sup_{k} \mathcal{H}^n(\Sigma_k) < \infty$,
  \item[-] $\mathcal{H}^{n-2}(\operatorname{Sing}(\Sigma_k)) =0$,
  \item[-]  $\sup_k c_k <\infty$.
  \end{itemize}
  Then the following hold:
  \begin{itemize}
  \item[(i)] if $\inf c_k>0$, then $\{\Sigma_k\}$ converges locally smoothly to an almost embedded $c$-stable hypersurface $\Sigma$ in $U$ (for some $c>0$) with $\mathcal H^{n-2}(\Sing(\Sigma))=0$, after possibly passing to a subsequence; moreover, if $\{\Sigma_k\}$ are all boundaries, then the density of $\Sigma$ is $1$ on $\mathcal{R}(\Sigma)$ and $2$ on $\mathcal{S}(\Sigma)$, and $\Sigma$ is a boundary as well;
  \item[(ii)] if $c_k\to 0$, then $\{\Sigma_k\}$ converges locally smoothly with integer multiplicity to an embedded stable hypersurface $\Sigma$ in $U$  with $\mathcal H^{n-2}(\Sing(\Sigma))=0$, after possibly passing to a subsequence.
  \end{itemize}
\end{theorem}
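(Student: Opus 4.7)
I would derive Theorem~\ref{T:compact} from the general anisotropic stable-compactness result stated in Appendix~\ref{sec:compactness} (Theorem~\ref{compactness}), combined with an orientation/maximum-principle argument to pin down the limiting multiplicity. First, I set up varifold convergence: the stability inequality~\eqref{eq:stability}, the uniform area bound, and the mass-ratio upper bound together give precompactness of $\{|\Sigma_k|\}$ in $\mathcal{V}(U)$; along a subsequence $|\Sigma_k| \rightharpoonup V$. Since each $\Sigma_k$ has constant $\Phi$-anisotropic mean curvature $c_k$ with $\sup_k c_k < \infty$, the limit $V$ has $L^\infty$-bounded $\Phi$-anisotropic mean curvature (equal to $c = \lim c_k$ after passing to a further subsequence), and the stability inequality passes to the limit against any $\varphi \in C^1_c(U)$.

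Next, I would apply Theorem~\ref{compactness} to conclude that the support of $V$ is an almost embedded $c$-stable hypersurface $\Sigma \subset U$ with $\mathcal{H}^{n-2}(\operatorname{Sing}(\Sigma)) = 0$, and that the convergence $\Sigma_k \to \Sigma$ is locally smooth on $\operatorname{Reg}(\Sigma)$ with some integer multiplicity. This is precisely where the anisotropic Schoen--Simon--Allard machinery enters, needing both the stability estimate and the a priori local area bound furnished by the mass-ratio hypothesis.

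For part (i), when $\{\Sigma_k\}$ are all boundaries, say $\Sigma_k = \partial^* \Omega_k$, BV-compactness yields $\Omega_k \to \Omega$ in $L^1$ along a subsequence. Near any point $p \in \Sigma$, smooth convergence means $\Sigma_k$ locally consists of finitely many graphs over a tangent leaf. Two such graphs with a common outward normal (both pointing to the same side of the leaf) cannot coexist: $\Omega_k$ would have to be simultaneously on both sides of the lower sheet, a contradiction. Hence at most two sheets accumulate, one from each side, and with opposite orientations. This fixes the density at $1$ on $\mathcal{R}(\Sigma)$ and $2$ on $\mathcal{S}(\Sigma)$, and identifies $\partial^* \Omega$ with $\mathcal{R}(\Sigma)$, with the tangentially-touching sheets being interior to $\Omega$; so $\Sigma$ is itself a boundary. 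When $\Sigma_k$ are not assumed to be boundaries, the almost-embedded $c$-stable conclusion already follows directly from Theorem~\ref{compactness}.

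For part (ii) ($c_k \to 0$) the orientation obstruction above becomes vacuous in the limit: even when the $\Sigma_k$ are boundaries, two sheets of $\Sigma_k$ of opposite orientation can collapse together onto an embedded limit leaf of multiplicity $2$, since the mean curvatures $\pm c_k$ both vanish. Thus one can only conclude that the limit is a stable embedded hypersurface with integer multiplicity, as claimed. The main technical obstacle of the whole argument is the invocation of Theorem~\ref{compactness}, which must secure both the $\mathcal{H}^{n-2}$-smallness of $\operatorname{Sing}(\Sigma)$ and local smooth convergence on $\operatorname{Reg}(\Sigma)$; this is the anisotropic Schoen--Simon--Allard theory developed in~\cite{Allard}, in which the mass-ratio upper bound plays exactly the role of the a priori local area bound indispensable in that setting.
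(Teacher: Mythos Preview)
Your proposal is correct and follows essentially the same approach as the paper: the paper's proof simply says ``same as \cite[Theorem~2.11]{Zhou-zhu2020}, replacing \cite[Theorem~2.6]{Zhou-zhu2020} with Theorem~\ref{compactness}'', and what you have written is precisely a sketch of that Zhou--Zhu argument with the anisotropic compactness theorem substituted in. Your identification of Theorem~\ref{compactness} as the key input, and of the alternating-orientation/maximum-principle argument as the mechanism fixing the multiplicities in case~(i), matches the cited proof.
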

\begin{proof}
  The proof is the same of~\cite[Theorem 2.11]{Zhou-zhu2020}, replacing the use of~\cite[Theorem 2.6]{Zhou-zhu2020} with Theorem~\ref{compactness}.
\end{proof}

We conclude by recalling the following:
\begin{definition}[local minimizing property]
  Given $c \in [0, \infty)$ and an open subset $U \subset M$, a Caccioppoli set $\Omega \in \mathcal{C}(M)$ is said to be locally $\mathbf{\Phi}^c$-minimizing in $U$ if, for any $p \in U$, there exists a geodesic ball $B_r(p) \subset U$ such that for any $\tilde \Omega  \in \mathcal{C}(M)$ with $\widetilde{\Omega} \Delta \Omega \subset B_r(p)$, it holds
  \[
    \mathbf{\Phi}^c(\Omega) \leq \mathbf{\Phi}^c(\widetilde{\Omega}).
  \]
\end{definition}

\section{Sweepouts}\label{sec:sweepouts}

In this section we introduced the key notion of sweepout. Throughout, we will assume that  $\Phi$ satisfies~\eqref{limitato}, and that $c$ is a constant in $[0, \infty)$.  We start by noticing that the reduced boundary map $\partial^*: \mathcal{C}(M) \to \mathcal{Z}^0_n(M; \mathbb{Z}_2)$ induces a double cover. We then give the following definition:

\begin{definition}[sweepout]
  A \emph{sweepout} on $M$ is a continuous map $\Gamma: [0, 1] \to \mathcal{Z}^0_n(M; \mathbb{Z}_2)$  satisfying  the following conditions:
  \begin{enumerate}[\normalfont(1)]
  \item There exists a continuous map $\Omega: [0,1] \to \mathcal{C}(M)$ such that $\Omega(0) = 0$, $\Omega(1) = M$, and $\Gamma(t) = \partial^* \Omega(t)$ for all $t \in [0,1]$.
  \item There is no concentration of mass,  meaning that
    \begin{equation}\label{eqn:no_mass_conc}
      \lim_{r \to 0} \sup\bigl\{\mathbf{\Phi}(\Gamma(t); B_r(p)) \mid t \in [0, 1], p \in M\bigr\} = 0\,.
    \end{equation}
  \end{enumerate}
  We will often use the shorthand notation $\{\Gamma_t = \partial^* \Omega_t\}_{t \in [0,1]}$, where $\Gamma_t = \Gamma(t)$ and
$\Omega_t = \Omega(t)$. The collection of all sweepouts of $M$ is denoted by $\Pi(M)$ or $\Pi$.
\end{definition}

We can now define the \(\Phi^{c}\)-width of \(M\).
\begin{definition}[width]
  For every $c \in [0, \infty)$ we define  $W^c_\Phi(M, g)$ as
  \[
    W^c_\Phi(M, g) \equiv \inf_{\Gamma \in \Pi(M)}\sup_{t \in [0,1]} \mathbf{\Phi}^c(\Omega(t))\,.
  \]
  We will often  write $W^c_\Phi$ without mentioning $(M, g)$.
\end{definition}

The following is an easy consequence of the isoperimetric inequality; see~\cite[Proposition~3.1-Remark 3.2]{DePhilippis_DeRosa2024} for a proof.
\begin{lemma}\label{lemma:utiledopo}
  For any $c \in [0, \infty)$, there exists a positive constant $C = C(M, \Phi, c)$, such that
   \[
    W^c_\Phi \geq C(M, \Phi, c) > 0\,.
  \]
  Moreover,
  \[
    W^c_\Phi(M, g)\leq \inf_{\Gamma  \in \Pi(M)}\sup_{t \in [0,1]} \mathbf{\Phi}(\Gamma(t))=W^0_\Phi(M, g)<\infty \qquad \forall c \geq 0\,.
  \]
\end{lemma}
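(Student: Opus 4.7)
The upper bound $W^c_\Phi\le W^0_\Phi$ is tautological. Since $\Vol(\Omega)\ge 0$ and $\mathbf{\Phi}(\Gamma(t))=\mathbf{\Phi}(\partial^*\Omega(t))$, for every sweepout $\{\Gamma_t=\partial^*\Omega_t\}_{t\in[0,1]}$ and every $c\ge 0$ we have $\mathbf{\Phi}^c(\Omega(t))\le \mathbf{\Phi}(\Gamma(t))$ pointwise in $t$; taking the sup in $t$ and the inf over sweepouts preserves the inequality. Finiteness of $W^0_\Phi$ is obtained by exhibiting a single sweepout with uniformly bounded $\mathbf{\Phi}$-mass, for instance the standard construction from the level sets of a Morse function $f:M\to[0,1]$, with $\Omega_t=\{f<t\}$; a mild regularization near the critical levels takes care of the no-concentration condition~\eqref{eqn:no_mass_conc}, and the uniform mass bound follows from~\eqref{limitato} together with the smoothness and Morse property of $f$.

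For the lower bound, the main tool is the isoperimetric inequality on the closed manifold $M$: there exists $C_{\mathrm{iso}}=C_{\mathrm{iso}}(M,g)>0$ with
\[
  \Per(\Omega)\ge C_{\mathrm{iso}}\,\min\bigl\{\Vol(\Omega),\,\Vol(M)-\Vol(\Omega)\bigr\}^{n/(n+1)}
\]
for every $\Omega\in\mathcal{C}(M)$. Combined with $\mathbf{\Phi}\ge \lambda^{-1}\Per$ from~\eqref{limitato}, this bounds $\mathbf{\Phi}(\partial^*\Omega)$ from below in terms of $\Vol(\Omega)$. Given an arbitrary sweepout $\{\Gamma_t=\partial^*\Omega_t\}$, the $L^1$-continuity of $t\mapsto\Omega_t$ together with $\Omega_0=0$ and $\Omega_1=M$ makes $t\mapsto\Vol(\Omega_t)$ continuous with boundary values $0$ and $\Vol(M)$; by the intermediate value theorem, for every prescribed $v\in(0,\Vol(M))$ there exists $t^*=t^*(\Gamma,v)\in[0,1]$ with $\Vol(\Omega_{t^*})=v$. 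At such a $t^*$,
\[
  \mathbf{\Phi}^c(\Omega_{t^*})\ge \frac{C_{\mathrm{iso}}}{\lambda}\,\min\{v,\Vol(M)-v\}^{n/(n+1)} - c\,v.
\]

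It remains to choose $v$ so that the right-hand side is strictly positive uniformly in $\Gamma$. For $c=0$ we set $v=\Vol(M)/2$, which immediately yields a lower bound depending only on $M,g,\lambda$. For $c>0$ we instead pick $v=v(M,\Phi,c)$ small enough that the volume penalty $c v$ is dominated by the isoperimetric term, e.g.\ $v=\min\bigl\{\Vol(M)/2,\,(C_{\mathrm{iso}}/(2c\lambda))^{n+1}\bigr\}$, which gives $\mathbf{\Phi}^c(\Omega_{t^*})\ge \tfrac{C_{\mathrm{iso}}}{2\lambda}\,v^{n/(n+1)}>0$. The only real subtlety is this balance between a boundary contribution of order $v^{n/(n+1)}$ and a volume penalty of order $cv$; since $n/(n+1)<1$, choosing $v$ sufficiently small (depending on $c$) always wins, which also explains why the constant in the statement must be allowed to depend on $c$.
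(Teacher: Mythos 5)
Your proof is correct and follows the same route the paper indicates: it cites~\cite{DePhilippis_DeRosa2024} (Proposition~3.1 and Remark~3.2) and explicitly describes the lower bound as ``an easy consequence of the isoperimetric inequality.'' Your unfolding of that argument—selecting via the intermediate value theorem a slice of prescribed volume $v$, bounding $\mathbf{\Phi}$ from below by $\lambda^{-1}\Per$ using~\eqref{limitato}, balancing the isoperimetric term $v^{n/(n+1)}$ against the penalty $cv$ by taking $v$ small when $c>0$, and handling the upper bound by the trivial comparison $\mathbf{\Phi}^c\le\mathbf{\Phi}$ plus a Morse-function sweepout—is exactly the expected argument.
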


\subsection{ONVP sweepouts \`a la Chodosh-Liokumovich-Spolaor}
 A key role will be played by the notion of \emph{Optimal Nested Volume-Parameterized} (\emph{ONVP}) sweepout, which was was first introduced in~\cite{Chodosh-Liokumovich-Spolaor2022} to study singular behaviors of $1$-width min-max minimal hypersurfaces in an $8$-dimensional closed Riemannian manifold. Here, we adapt this notion to our setting.
\begin{definition}[anisotropic ONVP sweepout]
  A sweepout $\{\Gamma_t = \partial^* \Omega_t\}_{t \in [0,1]}$ is an anisotropic \emph{optimal nested volume parameterized (ONVP) sweepout} for $(\Phi, c)$ if it satisfies the following conditions:
  \begin{description}
  \item [Optimal] $\sup_{t \in [0, 1]} \mathbf{\Phi}^c(\Omega_t) = W^c_\Phi(M, g)$;
  \item [Nested] $\Omega_{t_1} \subset \Omega_{t_2}$ for all $0 \leq t_1 \leq t_2 \leq 1$;
  \item [Volume-Parameterized] $\mathrm{Vol}(\Omega_t) = t\ \mathrm{Vol}(M, g)$ for all $t \in [0, 1]$.
  \end{description}
\end{definition}

The following theorem ensures the existence of ONVP sweepouts.
\begin{theorem}[existence of anisotropic ONVP sweepouts] \label{thm:exist_ONVP}
  Let $(M, g)$ be a closed Riemannian manifold, $c \ge 0$, and $\Phi$ be an elliptic integrand. Then there exists an anisotropic ONVP sweepout for $(\Phi, c)$.
\end{theorem}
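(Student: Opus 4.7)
The plan is to follow the standard three-step minimizing-sequence strategy, adapted from the isotropic constructions of Chambers--Liokumovich and Chodosh--Liokumovich--Spolaor to the anisotropic energy $\mathbf{\Phi}^c$.

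First, I would take a minimizing sequence of sweepouts $\{\Gamma^k = \partial^* \Omega^k\}_{k \geq 1}$ with $\sup_{t} \mathbf{\Phi}^c(\Omega^k_t) \leq W^c_\Phi + 1/k$, which exists by the definition of the width and by Lemma~\ref{lemma:utiledopo} (so that $W^c_\Phi$ is finite).

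Second, and most critically, I would modify each $\Gamma^k$ into a nested, volume-parameterized sweepout $\widetilde{\Gamma}^k = \partial^* \widetilde{\Omega}^k$ with an arbitrarily small increase in the maximum energy. I plan to approach this via a discretization-and-replacement scheme: subdivide $[0,1]$ into $N_k \to \infty$ pieces and, at each time $t_j = j/N_k$, replace $\Omega^k_{t_j}$ by a solution
\[
  \widetilde{\Omega}^k_{t_j} \in \arg\min\bigl\{ \mathbf{\Phi}^c(\Omega) : \widetilde{\Omega}^k_{t_{j-1}} \subset \Omega,\ \mathrm{Vol}(\Omega) = t_j\, \mathrm{Vol}(M, g) \bigr\},
\]
which by construction is nested with respect to the previous step and exactly calibrated in volume. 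Then I would interpolate continuously between consecutive discrete times, again by a constrained minimization preserving nesting and monotonicity of volume. The comparison $\mathbf{\Phi}^c(\widetilde{\Omega}^k_{t_j}) \leq \mathbf{\Phi}^c(\Omega^k_{t_j} \cup \widetilde{\Omega}^k_{t_{j-1}})$, combined with BV estimates on unions of nearby sets, should control the cost: by induction $\widetilde{\Omega}^k_{t_{j-1}}$ is close in measure to $\Omega^k_{t_{j-1}}$, which is flat-close to $\Omega^k_{t_j}$ for large $N_k$ thanks to the no-concentration-of-mass condition on the original sweepout.

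Third, I would pass to the limit as $k \to \infty$. For each rational $t$, the sequence $\widetilde{\Omega}^k_t$ has uniformly bounded anisotropic perimeter (hence bounded BV norm by~\eqref{limitato}), so by BV compactness it converges along a subsequence to some $\Omega^\infty_t \in \mathcal{C}(M)$; a diagonal argument over a countable dense set of times yields a family $\{\Omega^\infty_t\}$ on rationals, which I would then extend to all $t$ using the nested structure by setting $\Omega^\infty_t = \bigcup_{s<t,\ s\in\mathbb{Q}} \Omega^\infty_s$. This limit is automatically nested and volume-parameterized; optimality follows from the lower semicontinuity of $\mathbf{\Phi}^c$ under $L^1$ convergence of Caccioppoli sets together with $\sup_t \mathbf{\Phi}^c(\widetilde{\Omega}^k_t) \to W^c_\Phi$.

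The main obstacle is the second step: executing the replacement so that both continuous nesting and exact volume-parameterization hold simultaneously without inflating the maximum energy. The delicate point is that the natural competitor $\Omega^k_{t_j} \cup \widetilde{\Omega}^k_{t_{j-1}}$ typically has volume strictly larger than $t_j \, \mathrm{Vol}(M, g)$, so a careful volume-adjustment step compatible with nesting is required (for instance, removing a thin layer whose anisotropic perimeter is controlled by the volume gap via an isoperimetric estimate). In addition, the no-concentration-of-mass property for the limit has to be verified, which I expect to inherit from the uniform no-concentration of the modified $\widetilde{\Gamma}^k$, since only local modifications at comparable scales are used.
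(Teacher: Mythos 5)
Your overall architecture (minimizing sequence, nested modification, volume reparameterization, limit) is the right shape, but you are attempting to do strictly more than the paper's proof in the hardest step, and the extra ambition creates a genuine gap. The paper \emph{decouples} nesting from volume-parameterization: it first applies the nested approximation Lemma~\ref{lem:nested_approx} (a variant of Chambers--Liokumovich, used as a black box, which produces a nested continuous $\tilde\Omega$ with $\mathrm{Vol}(\tilde\Omega(t))$ merely strictly increasing and $\sup_t\mathbf{\Phi}^c(\tilde\Omega(t))$ within $\varepsilon$ of the original), and then observes that for a nested family the volume function is monotone, hence invertible, so the composition $\hat\Gamma(t)=\tilde\Gamma\circ(\varphi)^{-1}(t\,\mathrm{Vol}(M))$ is volume-parameterized for free. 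You instead try to enforce $\mathrm{Vol}(\widetilde\Omega^k_{t_j})=t_j\,\mathrm{Vol}(M)$ directly as a constraint in the minimization, which makes the comparison argument break.

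The concrete gap is in your claimed energy bound. Your competitor is $\Omega^k_{t_j}\cup\widetilde\Omega^k_{t_{j-1}}$. In general you only have
\[
\mathbf{\Phi}\bigl(\Omega^k_{t_j}\cup\widetilde\Omega^k_{t_{j-1}}\bigr)\le\mathbf{\Phi}\bigl(\Omega^k_{t_j}\bigr)+\mathbf{\Phi}\bigl(\widetilde\Omega^k_{t_{j-1}}\bigr),
\]
which is of order $2W^c_\Phi$, not $W^c_\Phi+\varepsilon$. Your attempt to rescue this is the remark that ``by induction $\widetilde\Omega^k_{t_{j-1}}$ is close in measure to $\Omega^k_{t_{j-1}}$,'' but that is not part of any induction hypothesis you have set up (the constrained minimizer containing a prescribed set with prescribed volume need not resemble $\Omega^k_{t_{j-1}}$ at all), and even if it were, $L^1$-closeness between $\widetilde\Omega^k_{t_{j-1}}$ and $\Omega^k_{t_j}$ controls the measure of their symmetric difference but not the perimeter of $\Omega^k_{t_j}\setminus\widetilde\Omega^k_{t_{j-1}}$, which is what governs $\mathbf{\Phi}(\Omega^k_{t_j}\cup\widetilde\Omega^k_{t_{j-1}})-\mathbf{\Phi}(\Omega^k_{t_j})$. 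The volume-adjustment step (carving out a ``thin layer'' via isoperimetry) has a similar problem: the layer must be removed from $\Omega^k_{t_j}\setminus\widetilde\Omega^k_{t_{j-1}}$ to preserve nesting, and nothing controls the geometry of that region, nor is $\mathrm{Vol}(\Omega^k_{t_j})$ itself anywhere near $t_j\,\mathrm{Vol}(M)$ since the original sweepout carries no volume normalization. Finally, your continuous interpolation between discrete times is also asserted rather than constructed. None of these issues arise if you first build a nested sweepout with small energy increase (as in the cited lemma, whose proof works by pairwise intersections and unions exploiting $\mathrm{Per}(A\cup B)+\mathrm{Per}(A\cap B)\le\mathrm{Per}(A)+\mathrm{Per}(B)$, not by volume-constrained minimization), and only afterward reparameterize in $t$.
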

\begin{proof}
  The proof is essentially the same as that of~\cite[Proposition~1.4]{Chodosh-Liokumovich-Spolaor2022}, with the only difference being that we replace~\cite[Theorem 1.4]{Chambers-Liokumovich2020} with the nested approximation Lemma~\ref{lem:nested_approx} below.

  Let $\{\Gamma^i \}^\infty_{i=1}$ be a sequence of sweepouts such that
  \[
    \lim_{i \to \infty} \sup_{t \in [0, 1]} \mathbf{\Phi}^c(\Omega^i_t) = W^c_\Phi(M, g)\,.
  \]
  For each $\Gamma^i$, we apply the nested approximation Lemma~\ref{lem:nested_approx} with $\varepsilon = \frac{1}{i}$ to obtain a nested continuous map $\tilde \Gamma_{t}^i = \partial \tilde \Omega_{t}^i$. Since $\tilde \Omega_0 \subset \Omega_0 = \emptyset$ and $M = \Omega_1 \subset \tilde \Omega_1$, we have $\tilde \Omega_0 = \emptyset$ and $\tilde \Omega_1 = M$ and thus, $\tilde \Gamma^i$ is also a sweepout. Let $\varphi^i(t) \coloneqq \mathrm{Vol}(\tilde \Omega_t)$, which is strictly increasing, so it has a continuous inverse $(\varphi^i)^{-1}$. We define a new sweepout $\hat \Gamma^i$ by
  \[
    \hat \Gamma^i(t) \coloneqq \tilde \Gamma^i \circ (\varphi^i)^{-1}(t\ \mathrm{Vol}(M, g))\,.
  \]

  Note that $\{\hat \Gamma^i\}^\infty_{i = 1}$ is a sequence of Nested Volume-Parameterized sweepouts with
  \[
    \lim_{i \to \infty} \sup_{t \in [0,1]}\mathbf{\Phi}^c({\hat \Omega}^i_t) = W^c_\Phi(M,g)\,.
  \]
  Moreover, since the sweepout is nested and volume parametrized, for \(s<t\),
  \[
  \Vol(\hat \Omega^i_t\Delta \hat \Omega^i_s)=  \Vol(\hat \Omega^i_t\setminus \hat \Omega^i_s)=(t-s) \Vol(M, g).
  \]
  By the Arzel\`a-Ascoli theorem, there exists a subsequence that converges to a sweepout $\Gamma'$, which is an anisotropic ONVP sweepout for $\Phi$.
\end{proof}

\begin{lemma}\label{lem:nested_approx}
  Suppose that $\Gamma = \partial^* \Omega$ is a continuous map on $[0, 1]$ in the sense that $\Gamma: [0, 1] \to \mathcal{Z}^0_n(M; \mathbb{Z}_2)$ and $\Omega: [0,1] \to \mathcal{C}(M)$ are continuous maps. Assume that $\sup_{t \in [0,1]} \mathbf{\Phi}^c(\Omega(t)) < \infty$. Then  for any $\varepsilon > 0$, there exists a continuous map $\tilde \Gamma = \partial^* \tilde \Omega$ defined on $[0, 1]$, such that
  \begin{enumerate}[\normalfont(1)]
  \item $\tilde \Omega(s) \subset \tilde \Omega(t)$ for all $0 \leq s \leq t \leq 1$ (i.e.\ $\tilde \Omega$ is nested);
  \item $\Vol(\Omega(t))$ is a strictly increasing function in $t$;
  \item $\tilde\Omega(0) \subset \Omega(0), \Omega(1) \subset \tilde \Omega(1)$;
  \item $\sup_{t \in [0, 1]}\mathbf{\Phi}^c(\tilde{\Omega}(t)) \leq \sup_{t \in [0, 1]} \mathbf{\Phi}^c(\Omega(t)) + \varepsilon$.
  \end{enumerate}
\end{lemma}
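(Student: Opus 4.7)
The plan is to adapt the nested monotonization of Chambers-Liokumovich~\cite{Chambers-Liokumovich2020} (used in~\cite{Chodosh-Liokumovich-Spolaor2022} for the area functional) to the anisotropic energy $\mathbf{\Phi}^c$. The adaptation is almost automatic thanks to two structural facts: by~\eqref{limitato} the anisotropic perimeter $\mathbf{\Phi}$ is comparable to the standard perimeter with constants depending only on $\lambda$, so the quantitative co-area/pigeonhole estimates on which~\cite{Chambers-Liokumovich2020} relies transfer directly; and the volume term $-c\Vol(\Omega)$ is $L^1$-continuous, so it changes by a controlled amount under any $L^1$-small modification.

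Concretely, using the flat (equivalently $L^1$) continuity of $\Omega$ together with the uniform bound $\sup_{t}\mathbf{\Phi}^c(\Omega(t)) < \infty$, I would partition $[0,1]$ into $N$ intervals $[t_{i-1}, t_i]$ so that $\Vol(\Omega(s)\Delta\Omega(t)) < \delta$ for all $s,t$ in the same piece, with $\delta > 0$ chosen small enough that $cN\delta < \varepsilon/4$. On each piece I would then apply the anisotropic analogue of the Chambers-Liokumovich local replacement to produce a nested interpolation starting at $\tilde\Omega(t_{i-1})$ and landing in a set containing $\Omega(t_i)$, with
\[
\sup_{t\in[t_{i-1},t_i]} \mathbf{\Phi}(\tilde \Omega(t)) \le \sup_{s \in[t_{i-1},t_i]} \mathbf{\Phi}(\Omega(s)) + \frac{\varepsilon}{2N}.
\]
The core step is a co-area pigeonhole: since the symmetric difference $\Omega(t)\Delta\Omega(t_{i-1})$ is $L^1$-small, one finds a level set of the signed distance to $\Omega(t_{i-1})$ of small $\mathcal{H}^n$-measure (hence small $\mathbf{\Phi}$-measure by~\eqref{limitato}) along which one ``seals'' the monotone interpolant. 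Initializing with $\tilde\Omega(0) = \Omega(0)$ and concatenating yields a globally nested family with $\tilde\Omega(1) \supset \Omega(1)$ and $\sup_t\mathbf{\Phi}^c(\tilde\Omega(t)) \le \sup_t\mathbf{\Phi}^c(\Omega(t)) + 3\varepsilon/4$.

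Finally, to enforce condition~(2) (strict volume growth), I would fix a point $p_0 \in M$ and replace $\tilde\Omega(t)$ by $\tilde\Omega(t)\cup B_{r(t)}(p_0)$, where $r: [0,1] \to [0, r_0]$ is continuous and strictly increasing with $r(0) = 0$ and $r_0$ small enough that the extra $\mathbf{\Phi}^c$-cost is at most $\varepsilon/4$. This union is still nested, still satisfies condition~(3), and has strictly increasing volume. The principal obstacle is verifying the anisotropic version of the Chambers-Liokumovich sealing/interpolation procedure, since the original argument is stated for the area functional; however, by~\eqref{limitato}, small $\mathcal{H}^n$-measure is equivalent to small $\mathbf{\Phi}$-measure up to a multiplicative factor $\lambda^2$, and this factor can be absorbed into the $\varepsilon$-budget by making the partition fine enough and the sealing layer thin enough.
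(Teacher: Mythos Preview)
Your proposal is correct and follows essentially the same approach as the paper: the paper's proof is a one-line sketch that simply refers to~\cite[Proposition 6.1]{Chambers-Liokumovich2020} and notes that, since $\mathbf{\Phi}$ and the area functional are mutually absolutely continuous, one only needs to replace $\mathcal{H}^n$ by $\mathbf{\Phi}$ throughout. Your write-up is in fact more detailed than the paper's, spelling out the co-area/pigeonhole mechanism and the ball-union trick for strict volume growth, but the underlying strategy is identical.
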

\begin{proof}[Sketch of Proof]
  The proof is essentially the same as that of~\cite[Proposition 6.1]{Chambers-Liokumovich2020}. Since $\mathbf{\Phi}$ and the area functional are absolutely continuous with respect to each other, we only need to replace $\mathcal{H}^n$ by $\mathbf{\Phi}$ loc.sit., and all the arguments follow.
\end{proof}

\begin{definition}[min-max sequence]
  Given an optimal  sweepout $\Gamma = \partial^* \Omega$ for $(\Phi, c)$, a \emph{min-max sequence} is a converging sequence $t_i \to t$ such that the following limit exists
  \[
    V = \lim_{i \to \infty} |\Gamma_{t_i}|\,,
  \]
  and
  \[
    W^c_\Phi(M, g) = \lim_{i \to \infty} \mathbf{\Phi}^c(\Omega(t_i))\,.
  \]
\end{definition}

\begin{definition}[critical domain and critical set]
  The \emph{critical domain} of an optimal  sweepout $\Gamma$ for $(\Phi, c)$ is defined as
  \[
    \mathbf{m}(\Gamma) \coloneqq \{t\in [0, 1]: \exists \text{ a min-max sequence } t_i \to t\}\,.
  \]
  The \emph{critical set} of an optimal  sweepout $\Gamma$ for $(\Phi, c)$ is defined as
  \[
    \mathbf{C}(\Gamma) \coloneqq \{V = \lim_{i \to \infty} |\Gamma_{t_i}|\in \mathcal{V}(M): \text{for some min-max sequence } t_i \to t\in [0,1]\}\,.
  \]
\end{definition}

\subsection{ONVP sweepouts with uniform mass ratio upper bounds}
In this subsection, our goal is to construct an ONVP sweepout that has a uniform mass ratio upper bound. This condition is crucial for our subsequent blowup analysis and curvature estimates.

\begin{theorem}[existence of anisotropic ONVP sweepouts with uniform mass ratio upper bound]\label{thm:exist_ONVP_bound}
  Fix $\Lambda > 0$. Let $(M^{n+1}, g)$ be a closed Riemannian manifold, let $c \in [0, \Lambda]$, and let $\Phi$ be an elliptic integrand satisfying~\eqref{limitato}. There exists an anisotropic ONVP sweepout $\Gamma = \partial^* \Omega$ for $(\Phi, c)$ and a constant $C = C(M, g, \lambda, \Lambda) > 0$, such that for any $t \in [0, 1]$, $\Omega(t)$ has a uniform mass ratio upper bound $C$:
  \begin{equation}\label{eqn:mass_ratio_bound}
    \mathbf{\Phi}(\Gamma(t); B_r(p)) \leq C r^n\,,
  \end{equation}
  for any $p \in M$, $r \in (0, \mathrm{inj}(M) / 2)$.  In the following, we will refer to $\Gamma$ as an anisotropic ONVP sweepout \emph{with uniform mass ratio upper bound}.
\end{theorem}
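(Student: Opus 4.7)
The plan is to combine the ONVP sweepout construction of Theorem~\ref{thm:exist_ONVP} with a slice-by-slice one-sided minimization step that automatically enforces the mass ratio bound. The key technical observation is that any $\mathbf{\Phi}^c$-minimizer in an appropriate one-sided class satisfies a uniform mass ratio estimate by comparison with spherical competitors. More precisely, if $\Omega$ minimizes $\mathbf{\Phi}^c(\hat\Omega)$ among $\hat\Omega$ containing some fixed $\Omega_-$ (together with, depending on the variant, an upper containment and/or a prescribed volume), then testing against $\Omega \cup B_r(p)$ or $\Omega \setminus B_r(p)$---suitably modified by a small volume rebalancing far from $p$ to respect the constraints---yields
\[
\mathbf{\Phi}(\partial^*\Omega; B_r(p)) \leq C r^n, \qquad C = C(M, g, \lambda, \Lambda),
\]
for every $p \in M$ and $r < \mathrm{inj}(M)/2$. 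Here the bound $c \leq \Lambda$ controls the contribution of the volume term in $\mathbf{\Phi}^c$, while the ellipticity assumption~\eqref{limitato} controls the surface term.

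Concretely, I take a minimizing sequence of ONVP sweepouts $\{\Omega^i_t\}$ with $\sup_t \mathbf{\Phi}^c(\Omega^i_t) \leq W^c_\Phi + 1/i$, and discretize $[0,1]$ by a partition $0 = t_0 < t_1 < \cdots < t_{N_i} = 1$ of vanishing mesh. I then define $\tilde\Omega^i_k$ inductively by one-sided minimization: $\tilde\Omega^i_0 = \emptyset$, and $\tilde\Omega^i_k$ is a minimizer of $\mathbf{\Phi}^c$ in the class
\[
\{\hat\Omega \in \mathcal{C}(M) : \tilde\Omega^i_{k-1} \subset \hat\Omega \subset \Omega^i_{t_k + \delta_i}, \ \Vol(\hat\Omega) = t_k\, \Vol(M, g)\},
\]
for a small buffer $\delta_i > 0$. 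Using the nesting and volume parameterization of the original sweepout, the slice $\Omega^i_{t_k}$ is itself an admissible competitor (the inductive hypothesis $\tilde\Omega^i_{k-1} \subset \Omega^i_{t_{k-1} + \delta_i} \subset \Omega^i_{t_k + \delta_i}$ closes up), so $\mathbf{\Phi}^c(\tilde\Omega^i_k) \leq \mathbf{\Phi}^c(\Omega^i_{t_k})$. The key observation then supplies a uniform mass ratio bound on $\partial^* \tilde\Omega^i_k$, with constant independent of $i$ and $k$.

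To assemble a continuous ONVP sweepout, I interpolate between consecutive $\tilde\Omega^i_{k-1}$ and $\tilde\Omega^i_k$ by a further nested volume-parameterized family supported inside the annulus $\tilde\Omega^i_k \setminus \tilde\Omega^i_{k-1}$, itself built via one-sided minimization at intermediate prescribed volumes, so that every intermediate slice retains the mass ratio bound; Lemma~\ref{lem:nested_approx} then glues everything into a continuous sweepout, losing at most $1/i$ in sup energy. Sending $i \to \infty$ via Arzel\`a-Ascoli, and exploiting that the mass ratio bound is preserved under weak varifold convergence, yields the required sweepout. The main obstacle is precisely this interpolation step: one must show that the intermediate slices can be defined through minimizers in the same one-sided class (rather than via an arbitrary homotopy), so that the ball-comparison argument applies uniformly across $t$; coordinating the buffer $\delta_i$, the discretization mesh $1/N_i$, and the inductive admissibility so that the final constant $C$ is independent of all these parameters is the central technical issue, with ellipticity~\eqref{limitato} and the upper bound $c \leq \Lambda$ ensuring uniformity throughout.
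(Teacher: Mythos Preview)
Your approach is genuinely different from the paper's, and the central claim---that a $\mathbf{\Phi}^c$-minimizer in the class $\{\tilde\Omega^i_{k-1}\subset\hat\Omega\subset\Omega^i_{t_k+\delta_i},\ \Vol(\hat\Omega)=t_k\Vol(M)\}$ satisfies a uniform mass ratio bound via comparison with balls---has a real gap. The competitors $\Omega\cup B_r(p)$ and $\Omega\setminus B_r(p)$ generally violate the \emph{set-inclusion} constraints, not just the volume constraint, and no ``rebalancing far from $p$'' repairs this. To make $\Omega\cup B_r(p)$ admissible you must intersect it with the upper obstacle $\Omega^i_{t_k+\delta_i}$; the resulting comparison then gives
\[
\mathbf{\Phi}(\partial^*\Omega;B_r(p))\le \mathbf{\Phi}(\partial B_r(p))+\mathbf{\Phi}(\partial^*\Omega^i_{t_k+\delta_i};B_r(p))+O(r^{n+1}),
\]
and the middle term is exactly the quantity you have no control over, since $\Omega^i_{t_k+\delta_i}$ is a raw slice of the original ONVP sweepout. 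The same issue arises on the other side with the lower obstacle $\tilde\Omega^i_{k-1}$; even if that one is controlled inductively, the constant would depend on $k$. Dropping the upper constraint does not help either: the volume rebalancing then forces you to remove mass from $\Omega\setminus\tilde\Omega^i_{k-1}$ somewhere, and the perimeter cost of doing so is only bounded in terms of $\Vol(\Omega\setminus\tilde\Omega^i_{k-1})=\Vol(M)/N_i\to 0$, so the estimate degenerates as the mesh refines.

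The paper circumvents this entirely: rather than minimize each global slice against obstacles, it fixes a sequence of triangulations $T_k$ at dyadic scales and, for each simplex $\sigma\in T_k$ on which a slice carries too much $\mathbf{\Phi}$-mass, replaces the sweepout \emph{inside $\sigma$ only}. The competitor in the relevant one-sided minimization is then $E'=\Omega\setminus\sigma$ (or $\Omega\cup\sigma$), whose boundary inside $\sigma$ is $\partial\sigma$, an object with explicitly known $\mathcal H^n$-measure $\lesssim(2^{-k}\mu)^n$. This is what produces a mass-ratio constant depending only on $(M,g,\lambda,\Lambda)$ and the combinatorics of the triangulation (Lemma~\ref{lem:trig_number}), with no reference to the uncontrolled original slices. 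The iteration over $k$ and the Arzel\`a--Ascoli limit then propagate the bound to all scales.
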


To prove the theorem, we will inductively modify the sweepout to ensure that the required uniform mass ratio upper bound holds for progressively smaller radii $r$, while preserving the ONVP property. By taking a subsequential limit of these modified sweepouts, we obtain an ONVP sweepout with uniform mass ratio upper bound at all scales.

\subsubsection{Triangulation}
It is a well-known fact that for any $\varepsilon > 0$, there exists \(\mu>0\) such that the closed Riemannian manifold $M^{n+1}$ can be triangulated in such a way that, if each simplex is equipped with the standard Euclidean metric with the same edge-length $\mu > 0$, then the resulting metric on $M$ is $(1 + \varepsilon)$-bilipschitz to $g$. Let us fix an $\varepsilon \in (0, 1)$ with $(1 + \varepsilon)^{n + 1} < 2$, and choose such a triangulation on $M$.

We begin the inductive construction of $T_k$ with $T_0$, which is defined as the collection of the (closed) underlying sets of all simplexes of dimension $(n+1)$ in the triangulation constructed above. To define $T_{k+1}$ for $k \in \mathbb{N}$, we subdivide each simplex in $T_k$ edgewise, as in~\cite{Edelsbrunner-Grayson2000}, and denote the set of the (closed) underlying sets of all new simplexes of dimension $(n+1)$ by $T_{k+1}$.

We start with the following geometric lemma whose proof is postponed to Appendix \ref{sec:geometric}.
\begin{lemma}\label{lem:trig_number}
  Let $(M^{n+1}, g)$ be a closed Riemannian manifold and $\{T_k\}^\infty_{k = 0}$ be a sequence of triangulations as described above with constants $\varepsilon \in (0, 1)$ and $\mu > 0$. Then there exists a positive integer $C = C(M, g)$, such that for any $r\in (0,\mu]$ and any $p \in M$, $B_r(p)$ intersects with at most $C$ simplices in $T_k$ where $k$ satisfies $2^{-k} \mu \leq r < 2^{-k+1} \mu$.
\end{lemma}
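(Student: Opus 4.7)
The plan is a straightforward volume-comparison argument exploiting the fact that simplices in $T_k$ have diameter comparable to $2^{-k}\mu$, which under the hypothesis $2^{-k}\mu \le r < 2^{-k+1}\mu$ is comparable to $r$.

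First, I would record the geometric fact that, by the definition of edgewise subdivision used in~\cite{Edelsbrunner-Grayson2000}, every simplex $\sigma \in T_k$ is a Euclidean $(n{+}1)$-simplex whose edge length is exactly $2^{-k}\mu$. In particular, viewed intrinsically with the Euclidean metric on $\sigma$, its diameter equals $2^{-k}\mu$ and its $(n{+}1)$-dimensional Euclidean volume is $c_{n+1} (2^{-k}\mu)^{n+1}$ for an explicit dimensional constant $c_{n+1}>0$. Since the piecewise-flat metric obtained by gluing the simplices is $(1+\varepsilon)$-bilipschitz to $g$ with $(1+\varepsilon)^{n+1}<2$, these quantities are preserved up to a factor of $2$ when measured with respect to $g$. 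Hence $\operatorname{diam}_g(\sigma) \le 2\cdot 2^{-k}\mu \le 2r$ and $\operatorname{Vol}_g(\sigma) \ge \tfrac{1}{2}c_{n+1}(2^{-k}\mu)^{n+1} \ge c'_{n+1} r^{n+1}$ for some dimensional constant $c'_{n+1}>0$.

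Next, suppose $\sigma \in T_k$ satisfies $\sigma \cap B_r(p) \ne \emptyset$. Pick any $q\in\sigma\cap B_r(p)$; then every point of $\sigma$ lies within $g$-distance $\operatorname{diam}_g(\sigma) \le 2r$ of $q$, so $\sigma \subset B_{3r}(p)$. Since distinct simplices of $T_k$ have disjoint interiors, summing their $g$-volumes gives
\[
  \#\bigl\{\sigma \in T_k : \sigma\cap B_r(p)\ne\emptyset\bigr\}\cdot c'_{n+1} r^{n+1} \le \operatorname{Vol}_g(B_{3r}(p)).
\]
For $r\le \mu$ (so $3r$ is below a fixed scale depending only on $M$), the Bishop--Gromov volume comparison applied on the compact manifold $(M,g)$ yields $\operatorname{Vol}_g(B_{3r}(p)) \le C_1(M,g)\,r^{n+1}$. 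Dividing the two bounds produces the desired $C=C(M,g)$, independent of $p$, $k$, and $r$.

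The only place where anything could go wrong is the claim about edgewise subdivision; if one preferred to avoid relying on the specific subdivision scheme, the same argument goes through as long as the simplices in $T_k$ have diameter bounded above by $C_2 \cdot 2^{-k}\mu$ and volume bounded below by $c_2 \cdot (2^{-k}\mu)^{n+1}$ uniformly in $k$ (which is guaranteed by any iterative subdivision scheme producing only finitely many congruence classes of simplices, as is the case here).
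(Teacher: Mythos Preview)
Your argument is correct and follows the same volume-comparison strategy as the paper's proof (the paper works in Euclidean coordinates within each top-level simplex of $T_0$ and bounds against the volume of a Euclidean ball rather than invoking Bishop--Gromov, but the idea is identical). Your final caveat is well placed: for $n+1\ge 3$ edgewise subdivision does not produce simplices all of edge length exactly $2^{-k}\mu$, but the uniform diameter and volume bounds you fall back on are precisely what the argument needs, and the paper's proof relies on them implicitly as well.
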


Let $\sigma$ be an \emph{open} regular simplex in $\mathbb{R}^{n+1}$ centered at $0$ with edge-length $\mu > 0$. Without loss of generality, we can assume that one of its faces is parallel to the hyperplane
\[
\{x_{n + 1} = 0\} = \{(x_1, x_2, \ldots, x_{n+1}) \in \mathbb{R}^{n+1} \mid x_{n+1} = 0\}
\]
and that for some $a < 0 < b$, this face is given by $\overline{\sigma} \cap \{x_{n + 1} = a\}$ and the vertex opposite to this face is given by $\bar \sigma \cap \{x_{n+1} = b\}$. We define a continuous map
\begin{equation}\label{eqn:tri_cts_map}
  \tilde \sigma: [0, 1] \to \mathcal{C}(\bar \sigma), \quad \tilde \sigma(t) =\bar \sigma \cap \{x_{n+1} \leq a + (b - a)t\}\,.
\end{equation}
Note that the intersection has a bound on its Euclidean volume
\begin{equation}\label{eqn:tri_vol}
  \mathcal{H}^n (\partial \tilde \sigma(t) \cap \bar \sigma) \leq c_n \mu^n\quad \forall t \in [0, 1]\,,
\end{equation}
where $c_n$ is the Euclidean volume of a regular $n$-simplex with edge length $1$.

\subsubsection{Proof of Theorem~\ref{thm:exist_ONVP_bound}}

Let $\Gamma = \partial^* \Omega$ be an anisotropic ONVP sweepout as in Theorem~\ref{thm:exist_ONVP}, and let $\{T_k\}^\infty_{k = 0}$ be a sequence of triangulations with $\varepsilon$ and $\mu$ as in Lemma~\ref{lem:trig_number}. Furthermore, we can choose $\mu$ small enough such that
\begin{equation}\label{eqn:mu_bound}
  c_n\mu^n > 10\Lambda c_{n+1}\mu^{n+1}.
\end{equation}
where $c_n$ is the Euclidean volume of a regular $n$-simplex with edge length $1$.

We write $\Gamma_{-1} = \Gamma$. We shall inductively construct a sequence of anisotropic ONVP sweepouts $\{\Gamma_k = \partial^* \Omega_k\}^\infty_{k = 0}$ such that for each $\Gamma_k$, the mass ratio upper bound
\begin{equation}
  \mathbf{\Phi}(\Gamma_k(t); B_r(p)) \leq C_1 r^n
\end{equation} holds for $r \in [2^{-k}\mu, \mu]$. Here, the constant $C_1 = C \cdot (20 (n+2)c_n \lambda)$, where $C$ is the constant in Lemma~\ref{lem:trig_number}.

For each $k \in \mathbb{N}$, suppose that $\Gamma_{k-1}$ has been constructed, such that for all $k' \in \{0, 1, 2, \cdots, k - 1\}$, $\sigma' \in T_{k'}$, and $t \in [0, 1]$,
\begin{equation}\label{eqn:mass_ratio_bound_2}
  \mathbf{\Phi}(\Omega_{k-1}(t); \sigma') - c\Vol(\Omega_{k-1}(t) \cap \sigma') \leq C_2 \left(2^{-k'}\mu\right)^n\,,
\end{equation}
where
\begin{equation}\label{e:C2}
C_2 := 12(n+2) c_n \lambda.
\end{equation}

We fix a $\sigma \in T_k$, and it follows from the lower semi-continuity of $\mathbf{\Phi}$ that the set of ``bad'' slices
\begin{equation}\label{e:bsigma}
  \mathcal{B}_\sigma \coloneqq \{t \in [0,1] \mid  \mathbf{\Phi}(\Gamma(t); \sigma) > C_2 \left(2^{-k}\mu\right)^n\}\,,
\end{equation}
is an open subset of $[0, 1]$.
Hence, $\mathcal{B}_\sigma$ is an (at most) countable union of disjoint open intervals
\[
  \mathcal{B}_\sigma = \coprod^\infty_{i = 1} (a_i, b_i)\,.
\]

If $\mathcal{B}_\sigma = \emptyset$, then we are done and set $\Gamma_k = \Gamma_{k - 1}$. Otherwise, we modify $\Gamma_{k - 1}$ in $\sigma$ and inductively construct a sequence $\{\Gamma^{j}_{k - 1}\}^\infty_{j=1}$ with
\[
    \Gamma^j_{k - 1} \cap (M \setminus \bar\sigma) = \Gamma_{k - 1} \cap (M \setminus \bar\sigma)\,.
\]
We let $\tilde \Gamma^j_{k - 1}$ be volume-parametrized $\Gamma^j_{k - 1}$. In the following, for $j \in \mathbb{N}^+$, we use the notations:
\[
  \mathcal{B}^j_\sigma \coloneqq \{t \in [0,1] \mid \mathbf{\Phi}(\Gamma_{k-1}^{j}(t); \sigma)  > C_2 \left(2^{-k}\mu\right)^n\}\,,
\]
and
\[
  \tilde{\mathcal{B}}^j_\sigma \coloneqq \{t \in [0,1] \mid \mathbf{\Phi}(\tilde\Gamma_{k-1}^{j}(t); \sigma)  > C_2 \left(2^{-k}\mu\right)^n\}\,.
\]

\paragraph*{\textbf{Step} $1$}
We shall construct $\Gamma^1_{k-1}$ from $\Gamma_{k-1}$. For technical reasons, we need to consider an exhaustion \(\{\sigma_{m}\}_{m \in \mathbb{N}^+}\) of \(\sigma\) such that
\[
\overline \sigma_{m}\subset \sigma\qquad  \text{and}\qquad  \sigma=\bigcup_{m}\sigma_{m}.
\]
Note that these can be constructed so that
\begin{equation}\label{sigma}
\Vol(\sigma_{m})\le \Vol(\sigma) \qquad \ \mathcal H^{n}(\partial \sigma_{m}) \le 1.01 \cdot \mathcal H^{n}(\partial \sigma).
\end{equation}

Fix  \(m\in \mathbb N^+\) and let  $E^{m}_{a_1}$ be a Caccioppoli set such that
\begin{enumerate}
\item $E^{m}_{a_1} \Delta \Omega_{k - 1}(a_1) \subset \sigma_{m} \cap \Omega_{k - 1}(a_1)$;
\item For all $E'$ with $E' \Delta \Omega_{k - 1}(a_1) \subset \sigma_{m} \cap \Omega_{k - 1}(a_1)$
\[
\mathbf{\Phi}^c(E^{m}_{a_1}) \leq \mathbf{\Phi}^c(E').
\]
\end{enumerate}
The existence of $E^{m}_{a_1}$ immediately follows from a straightforward applications of the direct method in the calculus of variations. Indeed, one can take \(E^{m}_{a_{1}}\) as the largest (with respect to inclusion) of the solutions to the  minimization problem
\[
\min \Bigl\{ \mathbf{\Phi}^c(E)\mid   E\subset \Omega_{k-1}(a_1),\quad E\setminus \sigma_{m} =  \Omega_{k - 1}(a_1)\setminus \sigma_{m}\Bigr\}.
\]
By taking $E' = \Omega_{k-1}(a_1) \setminus \sigma_{m}$ in condition (2),  and using~\eqref{sigma} along with the fact that $\mathcal{H}^n(\partial \sigma) \leq 2(n+2)c_n (2^{-k} \mu)^n$, we obtain
\begin{equation}\label{eqn:E_a1_vol}
\begin{split}
  \mathbf{\Phi}(E^{m}_{a_1};\overline{\sigma_{m}}) & \leq 1.01 \cdot 2(n+2)c_n \lambda (2^{-k}\mu)^n + \Lambda c_{n+1}(2^{-k}\mu)^{n+1}
  \\
  &\leq 3(n+2)c_n \lambda (2^{-k}\mu)^n< \mathbf{\Phi}(\Omega_{k-1}^{t};\sigma)\,.
\end{split}
\end{equation}
where we have also  used~\eqref{eqn:mu_bound}.

Similarly, we let $E^{m}_{b_1}$ be the smallest Caccioppoli set (with respect to inclusion) such that
\begin{enumerate}
\item $E^{m}_{b_1} \Delta \Omega_{k - 1}(b_1) \subset \sigma_{m} \setminus \Omega_{k - 1}(b_1)$;
\item For all $E'$ with $E' \Delta \Omega_{k - 1}(b_1) \subset \sigma_{m} \setminus \Omega_{k - 1}(b_1)$
\[
\mathbf{\Phi}^c(E^{m}_{b_1}) \leq \mathbf{\Phi}^c(E').
\]
\end{enumerate}
By taking $E' = \Omega_{k-1}(b_1) \cup \sigma_{m}$ in condition (2) and arguing as in~\eqref{eqn:E_a1_vol}, we obtain that
\begin{equation}\label{eqn:E_b1_vol}
  \mathbf{\Phi}(E_{b_1}; \overline{\sigma_{m}}) \leq 3(n+2)c_n \lambda(2^{-k}\mu)^n\,.
\end{equation}

We now  define a new sweepout $\Gamma^{1,m}_{k - 1} (t)= \partial^* \Omega^1_{k - 1}(t)$ by
\[
  \Omega^{1,m}_{k - 1}(t) = \begin{cases}
    \Omega_{k-1}(t) \cap E^{m}_{a_1}, & t \leq a_1\\
    (\Omega_{k-1}(t) \setminus \sigma) \cup E^{m}_{a_1} \cup (E^{m}_{b_1} \cap \tilde \sigma(\frac{t - a_1}{b_1 - a_1})), & a_1 < t < b_1\\
    \Omega_{k-1}(t) \cup E^{m}_{b_1}, & t \geq b_1
  \end{cases}
\]
where $\tilde \sigma(t)$ is defined in~\eqref{eqn:tri_cts_map}.

\begin{figure}[!ht]
    \centering
    \includegraphics[width=\linewidth]{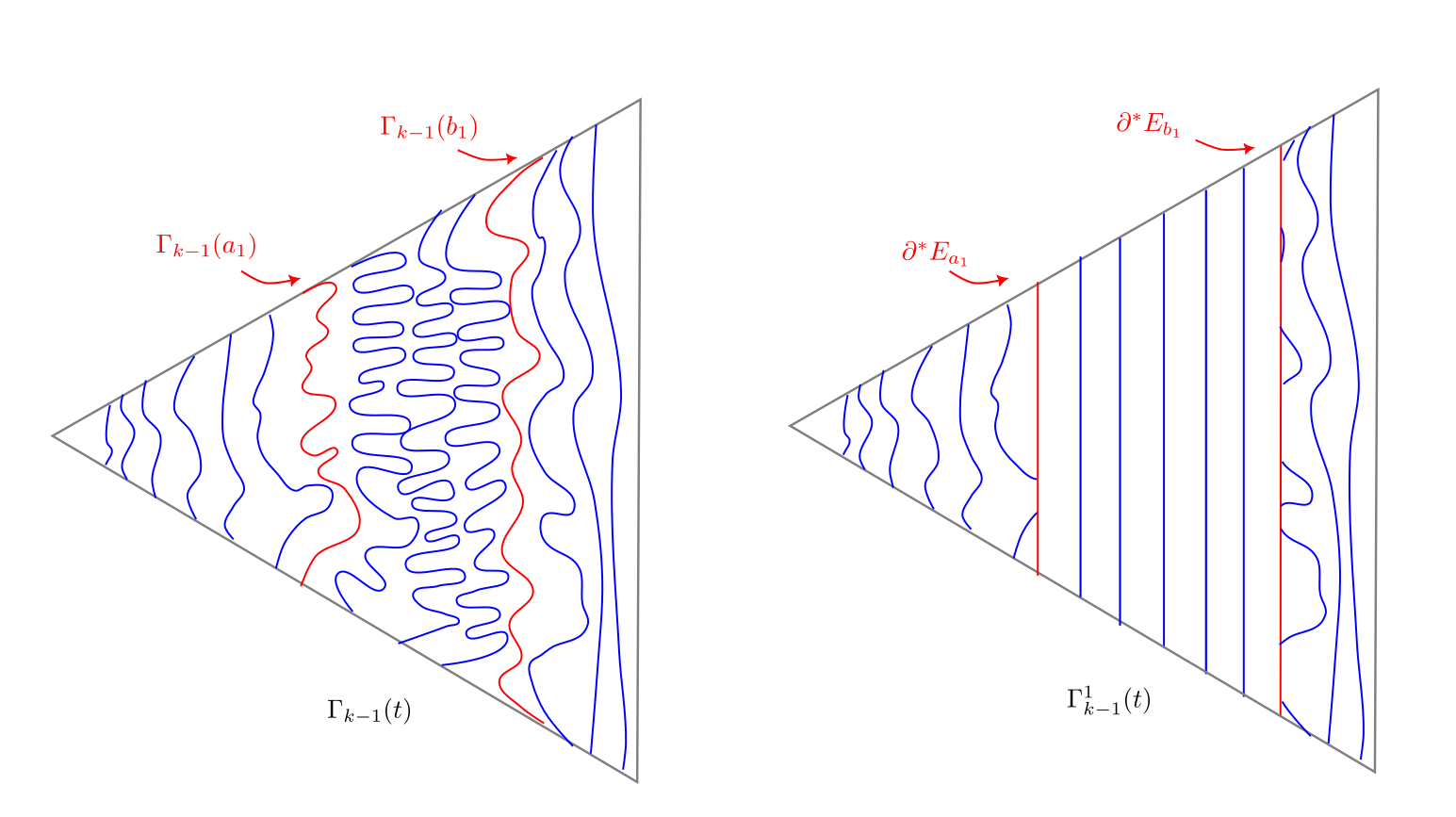}
     \caption{Construction of $\Gamma^1_{k - 1}(t)$}
   \label{fig:enter-label}
\end{figure}

From~\eqref{eqn:tri_vol},~\eqref{eqn:mu_bound},~\eqref{eqn:E_a1_vol} and~\eqref{eqn:E_b1_vol}, we obtain that for all $t \in (a_1, b_1)$,
\begin{equation}\label{eqn:est_good_slices_1_al}
  \mathbf{\Phi}(\Omega^{1,m}_{k-1}(t);  \overline{\sigma_{m}}) \leq 10(n+2)c_n \lambda (2^{-k}\mu)^n\,,
\end{equation}
since
\[
\Gamma^{1,m}_{k-1}(t) \cap \overline{\sigma_{m}} \subset \partial \sigma_{m} \cup (\partial^* E_{a_1}\cap \sigma_{m}) \cup (\partial^* E_{b_1} \cap \sigma_{m}) \cup \Bigl(\partial \tilde \sigma\Bigl(\frac{t-a_1}{b_1 - a_1}\Bigr) \cap \sigma\Bigr).
\]
In particular, by the definition of \(\mathcal B_{\sigma}^{1}\) and recalling~\eqref{e:C2}, we find that for  \(t\in (a_{1},b_{1})\)
\begin{equation}\label{eqn:est_slices_al}
\begin{aligned}
  &\mathbf{\Phi}^c(\Omega^{1,m}_{k-1}(t);\overline {\sigma_{m}})
  \\
  \le& \mathbf{\Phi}(\Omega^{1,m}_{k-1}(t);\overline {\sigma_{m}})
   \le  10(n + 2)c_{n}\lambda(2^{-k}\mu)^{n} &\text{by~\eqref{eqn:est_good_slices_1_al}}&
  \\
  \le& 10(n + 2)c_{n}\lambda(2^{-k}\mu)^{n}+\Lambda c_{n+1}(2^{-k}\mu)^{n+1}-c \Vol(\Omega_{k-1}(t)\cap \sigma)
  \\
  \le& 11(n + 2)c_{n}\lambda(2^{-k}\mu)^{n}-c \Vol(\Omega_{k-1}(t)\cap \sigma) &\text{by~\eqref{eqn:mu_bound}}&
  \\
  \le&  \mathbf{\Phi}(\Omega_{k-1}(t);\sigma)-c \Vol(\Omega_{k-1}(t)\cap \sigma)= \mathbf{\Phi}^c(\Omega_{k-1}(t); \sigma) &\text{by~\eqref{e:bsigma} and~\eqref{e:C2}}&.
\end{aligned}
\end{equation}
By (2)  in the construction  of $E^{m}_{a_1}$ and the inclusion  \(\Omega_{k-1}(t)\subset \Omega_{k-1}(a_{1})\) for $t \leq a_1$, we also obtain
\begin{equation}\label{eqn:est_slices_al<_a_1}
\begin{split}
  \mathbf{\Phi}(\Omega^{1,m}_{k-1}(t); \overline{\sigma_{m}}) - c \Vol(\Omega^1_{k-1}(t) \cap \sigma_{m})& \leq \mathbf{\Phi}(\Omega_{k-1}(t); \overline {\sigma_{m}}) - c \Vol(\Omega_{k-1}(t) \cap \sigma_{m})
  \\
  &\le \mathbf{\Phi}^{c}(\Omega_{k-1}(t); \sigma)+c\Vol(\sigma\setminus \sigma_{m}).
\end{split}
\end{equation}
Similarly,  for $t \geq b_1$,
\begin{equation}\label{eqn:est_slices_al>_b_1}
  \mathbf{\Phi}(\Omega^{1,m}_{k-1}(t); \overline {\sigma_m}) - c \Vol(\Omega^1_{k-1}(t) \cap \sigma_{m}) \leq \mathbf{\Phi}^{c}(\Omega_{k-1}(t); \sigma)+c\Vol(\sigma\setminus \sigma_{m}).
\end{equation}
We now note that  for \(\sigma' \in T_{k'}\), \(k'\in\{0,\dots,k-1\}\), and \(t\in [0,1]\),
\begin{equation}
\label{e:quasi}
\begin{split}
  \mathbf{\Phi}^c(\Omega^{1,m}_{k-1}(t); \sigma')& \le   \mathbf{\Phi}^c(\Omega_{k-1}(t);  \sigma')+c\Vol(\sigma\setminus \sigma_m)+\ \mathbf{\Phi}(\Omega^{1,m}_{k-1}(t); \sigma\setminus \overline{\sigma_m})
  \\
  &=  \mathbf{\Phi}^c(\Omega_{k-1}(t);  \sigma')+c\Vol(\sigma\setminus \sigma_m)+\ \mathbf{\Phi}(\Omega_{k-1}(t); \sigma\setminus \overline{\sigma_m}).
  \end{split}
\end{equation}
Indeed, since \(\Omega^{1,m}_k(t)\) coincides with \(\Omega_k(t)\) outside \(\sigma_m\Subset \sigma'\), the inequality above is trivial if  \(\sigma'\cap \sigma_m=\emptyset\). If  \(\sigma_m\Subset \sigma'\), the inequality follows from~\eqref{eqn:est_slices_al},~\eqref{eqn:est_slices_al<_a_1}, and~\eqref{eqn:est_slices_al>_b_1} by  decomposing \(\sigma'\) into \(\tilde \sigma \in T_k\) and exploiting the additivity  of \(\mathbf \Phi^c \) (note that one of the simplexes in the decomposition must coincide  with \(\sigma\)). The equality follows from the fact that \(\Omega_{k-1}^{1,m}\) equals \(\Omega_{k-1}\) one the open set \(\sigma\setminus \overline{\sigma_m}\). We remark  that, since \(\sigma_m \Subset \sigma\) for \(\sigma \in T_k\), these are the only two possibilities (recall that we are dealing with open simplexes). This is  the reason why we have introduced the addition parameter \(m\).

We now aim to pass to the limit as \(m\to \infty\). To this end,  note that,  by construction:
\[
\Vol(\Omega^{1,m}_{k-1}(t)\Delta \Omega^{1,m}_{k-1}(s))\le  \Vol(\Omega_{k-1}(t)\Delta \Omega_{k-1}(s))\le \Vol(M)|t-s|
\]
for \(t,s\) either in \([0,a_1]\) or in \([b_1,1]\), and there exists \(C\), independent of \(m\), such that
\[
\Vol(\Omega^{1,m}_{k-1}(t)\Delta \Omega^{1,m}_{k-1}(s))\le C|t-s|
\]
for \(t,s\) in \([a_1,b_1]\). Whence,  by the Arzel\`a-Ascoli theorem,  \(\Omega^{1,m}_{k-1}\) converges (up to a subsequence) to a sweepout \(\Omega^{1}_{k-1}\). By passing to the limit in~\eqref{e:quasi} and noticing that
\[
\Vol(\sigma\setminus \sigma_m)+\ \mathbf{\Phi}(\Omega_{k-1}(t); \sigma\setminus \overline{\sigma_m}) \to 0
\]
since \( \sigma\setminus \overline{\sigma_m}\downarrow \emptyset\), we  get that
\begin{equation}\label{e:ineq}
  \mathbf{\Phi}^c(\Omega^1_{k-1}(t); \sigma') \le   \mathbf{\Phi}^c(\Omega_{k-1}(t);  \sigma')
\end{equation}
for all \(\sigma'\in T_{k'}\), \(k'\in \{0,\dots, k-1\}\). In particular \(\Omega^{1}_{k-1}\) satisfies~\eqref{eqn:mass_ratio_bound_2} as well. Furthermore, since \(\sigma_m\uparrow\sigma\),
\[
\begin{split}
\mathbf\Phi^{c}(\Omega_{k - 1}^{1}(t);\sigma)&=\limsup_{m \to \infty} \mathbf\Phi^c(\Omega^{1, m}_{k - 1}(t); \sigma)\\
&\le \limsup_{m \to \infty} \mathbf\Phi^c(\Omega^{1, m}_{k - 1}(t); \overline{\sigma_m}) + \limsup_{m \to \infty} \mathbf\Phi(\Omega^{1,m}_{k - 1}(t); \sigma \setminus \overline{\sigma_m})\\
&= \limsup_{m \to \infty} \mathbf\Phi^c(\Omega^{1, m}_{k - 1}(t); \overline{\sigma_m}) + \limsup_{m \to \infty} \mathbf\Phi(\Omega_{k - 1}(t); \sigma \setminus \overline{\sigma_m})\\
&= \limsup_{m \to \infty} \mathbf\Phi^c(\Omega^{1, m}_{k - 1}(t); \overline{\sigma_m})\,.
\end{split}
\]
Hence,~\eqref{eqn:est_good_slices_1_al} implies that for $t \in (a_1, b_1)$,
\begin{equation}\label{eqn:est_good_slices_1}
  \mathbf{\Phi}(\Omega^{1}_{k-1}(t);  \sigma) \leq 10(n+2)c_n \lambda (2^{-k}\mu)^n\,,
\end{equation}
while the same argument that proved~\eqref{e:ineq} also shows that for all \(t\in [0,1]\),
\begin{equation}\label{e:ineq2}
\mathbf{\Phi}^c(\Omega^1_{k-1}(t))\le \mathbf{\Phi}^c(\Omega_{k-1}(t)).
\end{equation}
Inequality~\eqref{eqn:est_good_slices_1} implies  $\mathcal{B}^1_\sigma \subset
\mathcal{B}_\sigma \setminus (b_1, a_1)$, and therefore,
\[
  \mathcal{L}^1(\mathcal{B}^1_\sigma) \leq \mathcal{L}^1(\mathcal{B}_\sigma) - |b_1 - a_1|\,.
\]
Moreover, for $t_1, t_2 \leq a_1$,
\[
  \Vol(\Omega^1_{k - 1}(t_1) \Delta \Omega^1_{k - 1}(t_2)) \leq \Vol(\Omega_{k - 1}(t_1) \Delta \Omega_{k - 1}(t_2))\,,
\]
and for $t_1, t_2 \geq b_1$,
\[
  \Vol(\Omega^1_{k - 1}(t_1) \Delta \Omega^1_{k - 1}(t_2)) \leq \Vol(\Omega_{k - 1}(t_1) \Delta \Omega_{k - 1}(t_2))\,.
\]
Hence, the volume-parametrized sweepout $\tilde \Gamma^1_{k - 1}$ satisfies
\[
  \mathcal{L}^1(\tilde{\mathcal{B}}^1_\sigma) \leq \mathcal{L}^1(\mathcal{B}^1_\sigma)\,.
\]
Finally,~\eqref{e:ineq2} implies that $\tilde \Gamma^1_{k - 1}$ is an anisotropic ONVP sweepout.

\paragraph*{\textbf{Step} $j + 1$} Assuming that we have constructed $\Gamma^j_{k - 1} = \partial^* \Omega^j_{k - 1}$  satisfying~\eqref{eqn:mass_ratio_bound_2} and such that $\mathcal{B}^j_\sigma \subset \mathcal{B}_\sigma \setminus \bigcup^j_{i = 1}(b_i, a_i)$, we can repeat the previous construction to get a new sweepout $\Gamma^{j+1}_{k-1}$ still satisfying~\eqref{eqn:mass_ratio_bound_2}  and such that  $\mathcal{B}^{j + 1}_\sigma \subset \mathcal{B}_\sigma \setminus \bigcup^{j + 1}_{i = 1}(b_i, a_i)$ and
\[
  \mathcal{L}^1(\tilde{\mathcal{B}}^{j + 1}_\sigma) \leq \mathcal{L}^1(\mathcal{B}^{j + 1}_\sigma) \leq \mathcal{L}^1(\mathcal{B}_\sigma) - \sum^{j + 1}_{i=1} |b_i - a_i|\,.
\]
By the Arzel\`a-Ascoli theorem, we can pass to a subsequential limit as \(j \to \infty\), yileding ONVP sweepouts \(\tilde \Gamma_{k-1}^{j}\) that converge
 to an ONVP sweepout $\Gamma^\infty_{k - 1}$. By the lower semi-continuity of $\mathbf{\Phi}$, the ``bad'' slice set
\[
  \mathcal{B}^\infty_\sigma \coloneqq \{t \in [0,1] \mid \mathbf{\Phi}(\Gamma^\infty_{k - 1}(t); \sigma) > C_2 \left(2^{-k}\mu\right)^n\}\,
\]
satisfies
\[
  \mathcal{L}^1(\mathcal{B}^\infty_\sigma) \leq \limsup_{j \to \infty} \mathcal{L}^1(\tilde{\mathcal{B}}^j_\sigma) \leq \limsup_{j \to \infty} \mathcal{L}^1(\mathcal{B}^j_\sigma) = 0\,.
\]
Consequently, $\mathcal{B}^\infty_\sigma = \emptyset$. Hence, for all $t \in [0, 1]$,
\[
  \mathbf{\Phi}(\Gamma^\infty_{k - 1}(t);\sigma) \leq C_2 \left(2^{-k}\mu\right)^n\,,
\]
and $\Gamma^\infty_{k - 1} \setminus \bar \sigma$ coincides with $\Gamma_{k - 1} \setminus \bar \sigma$, up to reparameterization. By~\eqref{e:ineq} and the lower semi-continuity of $\mathbf{\Phi}$ again,  $\Gamma^\infty_{k - 1}$ also satisfies~\eqref{eqn:mass_ratio_bound_2}.

Since there are only finitely many $\sigma \in T_k$, we can inductively perform the replacement construction described above for each $\sigma \in T_k$ to finally obtain an anisotropic ONVP sweepout, denoted by $\Gamma_k = \partial^* \Omega_k$. This sweepout satisfies, for all $\sigma \in T_k$,
\[
  \mathbf{\Phi}(\Gamma_{k}(t);\sigma) \leq C_2 \left(2^{-k}\mu\right)^n\,,
\]
Together with the estimates~\eqref{eqn:mass_ratio_bound_2}, we conclude that for all $k' \in \{0, 1, \cdots, k\}$, $\sigma' \in T_{k'}$, and $t \in [0, 1]$,
\[
  \mathbf{\Phi}^c(\Gamma_{k}(t); \sigma') \leq C_2 \left(2^{-k'}\mu\right)^n\,,
\]
and thus, since $\mathcal{H}^n(\partial \sigma') \leq 2(n + 2)(2^{-k'}\mu)^n$, we have
\begin{equation}\label{eqn:mass_ratio_bound_3}
  \mathbf{\Phi}^c(\Gamma_{k}(t);\overline {\sigma'}) \leq (C_2 + 2(n+2)c_n\lambda) \left(2^{-k'}\mu\right)^n \leq (15(n+2)c_n\lambda) \left(2^{-k'}\mu\right)^n\,.
\end{equation}

By Lemma~\ref{lem:trig_number}, for any $p \in M$ and any $r \in [2^{-k}\mu, \mu]$, let $k' \in \{0, 1 \ldots, k\}$ such that $2^{-k'}\mu \leq r < 2^{-k' + 1}\mu$, there are at most $C$ many $\sigma \in T_{k'}$ that intersect with $B_r(p)$, and consequently, for any $t \in [0, 1]$,
\[
\begin{aligned}
  & \mathbf{\Phi}(\Gamma_k(t); B_r(p)) 
  \\
  \le& \sum_{\sigma \in T_{k'}, \sigma \cap B_r(p) \neq \emptyset} \mathbf{\Phi}(\Gamma_k(t);\overline \sigma)
  \\
  \le& \sum_{\sigma \in T_{k'}, \sigma \cap B_r(p) \neq \emptyset} \mathbf{\Phi}^c(\Gamma_k(t);\overline \sigma) + c\sum_{\sigma \in T_{k'}, \sigma \cap B_r(p) \neq \emptyset} \operatorname{Vol}(\sigma)
  \\
  \le& C \cdot (15(n+2)c_n\lambda) \left(2^{-k'}\mu\right)^n + 2\Lambda \cdot C \cdot c_{n + 1}\left(2^{-k'}\mu\right)^{n + 1} &\text{by~\eqref{eqn:mass_ratio_bound_3}}&
  \\
  \le& C_1 r^n &\text{by~\eqref{eqn:mu_bound}}&\,.
\end{aligned}
\]

Finally, letting \(k\to \infty\), the sequence $\{\Gamma_k \}^\infty_{k = 0}$ converges subsequenially to an ONVP sweepout, denoted by $\Gamma'$. It follows from the lower semi-continuity of $\mathbf{\Phi}$ again that for any $p \in M$, $r \in (0, \mu]$, and $t \in [0, 1]$,
\[
  \mathbf{\Phi}(\Gamma'(t);B_r(p)) \leq C_1 r^n
\]
Since the above estimate is clearly satisfied when \(r\ge \mu\) with a possibly larger constant, the proof is concluded.

\section{Almost minimizing varifolds  with  uniform mass ratio upper bound}
\label{sec: Alomost minimizing varifolds and unifrom mass ratio}
In this section, we fix a closed Riemannian manifold $(M^{n+1}, g)$ and an elliptic integrand $\Phi$ satisfying~\eqref{limitato}. Throughout the following, we will use the notation $\an(p,r,s) := B_s(p) \setminus \overline{B_r(p)}$.

We start with the following two (by now standard) definitions.

\begin{definition}[$c$-almost minimizing varifolds]\label{def:am_varifolds}
  For any given $\varepsilon, \delta > 0$ and any open subset $U \subset M$, we define $\mathfrak{a}^{\Phi,c}(U; \varepsilon, \delta)$ to be the set of all $\Omega \in \mathcal{C}(M)$ such that if $\{\Omega_i\}^m_{i = 0} \subset \mathcal{C}(M)$ is a sequence such that
  \begin{enumerate}[\normalfont(1)]
  \item $\Omega_0 = \Omega$;
  \item $\Omega_i \Delta \Omega \subset U$;
  \item \(\Vol(\Omega_i \Delta \Omega_{i - 1}) \leq \delta\);
  \item $\mathbf{\Phi}^c(\Omega_i) \leq \mathbf{\Phi}^c(\Omega) + \delta$, for all $i = 1, 2, \ldots, m$;
  \end{enumerate}
  then $\mathbf{\Phi}^c(\Omega_m) \geq \mathbf{\Phi}^c(\Omega) - \varepsilon$.

  We say that a varifold $V \in \mathcal{V}_n(M)$ is \emph{$c$-almost minimizing for $\Phi$ in $U$} if there exist sequences $\varepsilon_i \to 0$, $\delta_i \to 0$, and $\Omega_i \in \mathfrak{a}^{\Phi, c}(U;\varepsilon_i, \delta_i)$, such that $\mathbf{F}(|\partial^* \Omega_i|, V) \leq \varepsilon_i$ where  \(\mathbf{F}\) is the
 the canonical distance on varifolds.  When we need to  specify the sequence $\Omega_i$, we will say that $V$ is $c$-almost minimizing for $\Phi$ in $U$ by means of $\Omega_i$. If we further want to specify also the sequences $\varepsilon_i, \delta_i$, we will say that $V$ is $c$-almost minimizing for $\Phi$ in $U$ by means of $\Omega_i$, $\varepsilon_i, \delta_i$.
\end{definition}

The following concepts were first introduced in \cite{Li2023} to prove a compactness result of Almgren-Pitts width realizations. Here, for convenience, we adapt the notations from \cite{WZ2023}.

\begin{definition}\label{admissible_annuli}
  Given an $L \in \mathbb{N}^+$ and $p \in M$, a collection of annuli centered at $p$
  \[
    \mathscr{C} := \{\an(p, s_1, r_1), \an(p, s_2, r_2), \cdots, \an(p, s_L, r_L)\}
  \]
  is called \emph{$L$-admissible} if $2r_{j + 1} < s_j$ for all $j = 1, 2, \cdots, L - 1$.
\end{definition}

\begin{definition}\label{def:am_admissible_annuli}
  Given an $L$-admissible collection of annuli $\mathscr{C}$, a varifold $V \in \mathcal{V}_n(M)$ is said to be \emph{$c$-almost minimizing for $\Phi$ in $\mathscr{C}$} if there exists an annulus $A \in \mathscr{C}$ such that $V$ is $c$-almost minimizing for $\Phi$ in $A$.

  Similar, we will say that $V$ is $c$-almost minimizing for $\Phi$ in $\mathscr{C}$ by means of $\Omega_i$, if there exists an annulus $A \in \mathscr{C}$ such that $V$ is $c$-almost minimizing for $\Phi$ in $A$ by means of $\Omega_{i}$.
\end{definition}

\begin{definition}\label{def:smallannuli}
  A varifold $V \in \mathcal{V}_n(M)$ is said to be \emph{$c$-almost minimizing for $\Phi$ in small annuli} (by means of $\Omega_i$) if for every $p \in M$,  there exists $r_\text{am}(p) > 0$, such that $V$ is $c$-almost minimizing for $\Phi$ in $\an(p,s,r) \cap M$ for all $0 < s < r \leq r_\text{am}(p)$ (by means of $\Omega_i$).
\end{definition}

\begin{lemma}\label{lem:am_annuli}
  For any $L \in \mathbb{N}^+$, if $V$ is a $c$-almost minimizing varifold for $\Phi$ in every $L$-admissible collection of annuli, then $V$ is $c$-almost minimizing for $\Phi$ in small annuli.
\end{lemma}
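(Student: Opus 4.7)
The plan is to argue by contradiction, using the fact that the $L$-admissibility condition $2 r_{j+1} < s_j$ can always be achieved by passing to well-separated scales when we have an unbounded supply of annuli with radii tending to $0$.

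Suppose the conclusion fails. Then there exists a point $p \in M$ such that for every $r_0 > 0$, one can find $0 < s < r \leq r_0$ for which $V$ is \emph{not} $c$-almost minimizing for $\Phi$ in $\an(p, s, r)$. Taking $r_0 = 1/k$ produces a sequence of pairs $(s_k, r_k)$ with $r_k \to 0$ and such that $V$ fails to be $c$-almost minimizing in every $\an(p, s_k, r_k)$.

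Now I would extract, by induction on $j \in \{1, \dots, L\}$, a subsequence $k_1 < k_2 < \dots < k_L$ satisfying the $L$-admissibility separation. Start by choosing $k_1 = 1$ and set $(s^{(1)}, r^{(1)}) := (s_1, r_1)$. Given $(s^{(j)}, r^{(j)}) = (s_{k_j}, r_{k_j})$, use $r_k \to 0$ to pick $k_{j+1} > k_j$ with $r_{k_{j+1}} < s_{k_j}/2$, and set $(s^{(j+1)}, r^{(j+1)}) := (s_{k_{j+1}}, r_{k_{j+1}})$. The resulting collection
\[
  \mathscr{C} := \{\an(p, s^{(1)}, r^{(1)}), \, \an(p, s^{(2)}, r^{(2)}), \, \dots, \, \an(p, s^{(L)}, r^{(L)})\}
\]
is $L$-admissible by construction. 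By hypothesis, $V$ is $c$-almost minimizing for $\Phi$ in some annulus belonging to $\mathscr{C}$, which directly contradicts the fact that $V$ is not $c$-almost minimizing in any $\an(p, s_k, r_k)$. This contradiction forces the existence of $r_{\text{am}}(p) > 0$ as in Definition~\ref{def:smallannuli}.

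There is no real obstacle here: the argument is purely combinatorial, relying only on the openness of the scale parameter range and the fact that an infinite reservoir of radii shrinking to $0$ lets us enforce the geometric gap $2 r_{j+1} < s_j$ by a greedy subsequence extraction. The only mild subtlety is writing down the negation of ``$c$-almost minimizing in small annuli'' correctly, since the quantifier structure involves both a threshold $r_{\text{am}}(p)$ and the pair $(s,r)$ below it.
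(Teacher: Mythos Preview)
Your proof is correct and follows essentially the same contradiction argument as the paper: negate the ``small annuli'' property at some point $p$, harvest bad annuli at arbitrarily small scales, and inductively enforce the gap condition $2r_{j+1} < s_j$ to assemble an $L$-admissible collection in which $V$ fails to be $c$-almost minimizing in every member. The only cosmetic difference is that you first produce an infinite sequence and then extract, whereas the paper builds the $L$ annuli directly one at a time; the content is identical.
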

\begin{proof}
  Suppose for the sake of contradiction that there exists $p \in M$ such that for every $r \in (0, \operatorname{inj}(M))$, there exists $s = s(r) > 0$ such that $V$ is not $c$-almost minimizing in $A(p, s, r)$.

  Therefore, we can inductively choose 
  
  \[
    \begin{aligned}
      r_1 &\in (0, \operatorname{inj}(M)), & s_1 &:= s(r_1), \\
      r_2 &\in (0, s_1/2), & s_2 &:= s(r_2),\\
       &\cdots & &\cdots \\
      r_{L} &\in (0, s_{L - 1}/2), & s_{L} &:= s(r_{L}).
    \end{aligned}
  \]
  By the construction above, $V$ is not $c$-almost minimizing in any annulus in the $L$-admissible collection of annuli
  \[
    \mathscr{C} := \{\an(p, s_1, r_1), \an(p, s_2, r_2), \cdots, \an(p, s_{L}, r_{L}\}\,.
  \]
  This yields a contradiction.
\end{proof}

In the next theorem we show that it is possible to construct a varifold which is \emph{critical}  (i.e.\ it realizes the width), satisfies a mass ratio upper bound,  is \(c\)-almost minimizing on annuli and it has first variation bounded by \(c\). Existence of a \(c\)-almost minimizing varifold can be obtained by verbatim following the arguments in~\cite[Theorem 4.10]{Pitts1981} as adapted in~\cite[Theorem 5.6]{ZhouZhu2019} to the case of Caccioppoli sets, which is relevant for us. 
The fact that the obtained varifold has also \(c\)-bounded first variation is usually obtained via a pull tight procedure. We can not perform this construction, since it is critical for us that the varifold is limit of (boundaries of) sets with uniform mass ratio bound and this property might not be  preserved by the pull tight. Instead, we will prove this via a simple cut of trick which however relies on the mass ratio bound. Note that in general an almost minimizing varifold in small annuli does not need to be stationary, as the example of a Dirac measure at a point of the Grassmannian bundle shows. Note also that it is crucial that \(n\ge 2\) since two half lines meeting at a non-planar angle define an almost minimizing varifold in small annuli with an area ratio bound, but it is not stationary.

We also need to exploit the construction in~\cite[Proposition 4.1]{DePhilippis_DeRosa2024} to obtain good bounds on the radii function in \(r_{\text{am}}(p)\). These bounds would play a key role in passing to the limit for \(c\to 0\) in the final argument of the proof of the main results.

\begin{theorem}[existence of $c$-almost minimizing varifold] \label{thm: existence of almost minimizing varifold}
  Let \(n+1\ge 3\), $(M^{n+1}, g)$ be a closed Riemannian manifold, let $\Phi$ be an elliptic integrand satisfying~\eqref{limitato}, let $\Lambda>0$ and let $c \in [0,\Lambda]$. There exists an ONVP sweepout $\Gamma $ with uniform mass ratio upper bound  $C$,  and an $n$-varifold $V \in \mathbf{C}(\Gamma)\cap \mathcal{IV}(M)$ with the following properties:
  \begin{enumerate}[\normalfont(1)]
  \item \(V\) has anisotropic first variation bounded by $c$;
  \item  \(V\) is $c$-almost minimizing for $\Phi$ in every $\bar L$-admissible collection of annuli, by means of a min-max sequence $\Omega_i$ extracted from the sweepout $\Gamma$. Here \(\bar L\) is at most \(27\) (so in particular independent of \(V\)).
 \end{enumerate}
\end{theorem}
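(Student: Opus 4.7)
The plan is to combine the Almgren--Pitts combinatorial pull of~\cite{Pitts1981} with the ONVP sweepout of Theorem~\ref{thm:exist_ONVP_bound}, thereby producing an almost minimizing varifold that automatically inherits the uniform mass ratio bound. Precisely because the uniform mass ratio bound cannot survive the usual pull-tight procedure, I would abandon pull-tight and instead bound the first variation by $c$ via a direct cut-off argument that exploits the hypothesis $n\ge 2$.

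\textbf{Step 1: construction of the $c$-almost minimizing varifold.} I would start from an ONVP sweepout $\Gamma=\partial^{*}\Omega$ with uniform mass ratio upper bound $C$ provided by Theorem~\ref{thm:exist_ONVP_bound}, and run on it the discretization/combinatorial pull of~\cite[Theorem~4.10]{Pitts1981}, in the form already adapted to Caccioppoli sets in~\cite[Theorem~5.6]{ZhouZhu2019} and to the $\mathbf{\Phi}^c$ setting in~\cite[Proposition~4.1]{DePhilippis_DeRosa2024}. This produces a min-max sequence $\{\Omega_i=\Omega(t_i)\}$ of slices of $\Gamma$ whose boundaries converge to an integral varifold $V\in \mathbf{C}(\Gamma)\cap\mathcal{IV}(M)$ that is $c$-almost minimizing in every $\bar L$-admissible collection of annuli; the covering/pigeonhole argument of~\cite[Proposition~4.1]{DePhilippis_DeRosa2024} yields the explicit bound $\bar L\le 27$, independent of $V$. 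Crucially, since each $\Omega_i$ is a slice of $\Gamma$, the bound $\mathbf{\Phi}(|\partial^{*}\Omega_i|;B_r(p))\le C r^{n}$ of Theorem~\ref{thm:exist_ONVP_bound} passes to $V$ in the varifold limit.

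\textbf{Step 2: first variation bounded by $c$.} By Lemma~\ref{lem:am_annuli}, $V$ is $c$-almost minimizing in small annuli. Fix $X\in\mathcal{X}(M)$; by a partition of unity it suffices to bound $\delta_{\mathbf{\Phi}}V(X)$ when $\spt(X)\subset B_r(p)$ with $r<r_{\mathrm{am}}(p)$. For $\rho\in(0,r/2)$ pick a cutoff $\eta_\rho\in C^{\infty}(M)$ with $\eta_\rho\equiv 0$ on $B_\rho(p)$, $\eta_\rho\equiv 1$ outside $B_{2\rho}(p)$, $0\le\eta_\rho\le 1$ and $\|D\eta_\rho\|_\infty\le C/\rho$. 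Since $X\eta_\rho$ is supported in $\an(p,\rho,r)$, where $V$ is $c$-almost minimizing by means of the $\Omega_i$, applying Definition~\ref{def:am_varifolds} to the discretized flows $(\varphi^{\pm tX\eta_\rho})_{\#}\Omega_i$ and passing to the limits $i\to\infty$, $t\to 0$ gives
\[
\delta_{\mathbf{\Phi}}V(X\eta_\rho)=c\int_{\Omega}\operatorname{div}(X\eta_\rho)\,d\Vol,\qquad \Omega=\lim_i\Omega_i,
\]
and the divergence theorem, together with the standard inequality $\|\partial^{*}\Omega\|\le\|V\|$ (which follows from $|\partial^{*}\Omega_i|\to V$ as varifolds and $\Omega_i\to\Omega$ in $L^1$), yields
\[
|\delta_{\mathbf{\Phi}}V(X\eta_\rho)|\le c\int |X\eta_\rho|\,d\|V\|.
\]
Writing $X=X\eta_\rho+X(1-\eta_\rho)$, the remaining piece is controlled by the rough bound~\eqref{e:firstvaritationroughbound} and the mass ratio bound $\|V\|(B_{2\rho}(p))\le C(2\rho)^{n}$:
\[
|\delta_{\mathbf{\Phi}}V(X(1-\eta_\rho))|\lesssim \rho^{n}\bigl(\|X\|_\infty/\rho+\|X\|_\infty\bigr)\lesssim \rho^{\,n-1}\|X\|_\infty,
\]
which vanishes as $\rho\to 0$ precisely because $n\ge 2$. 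Sending $\rho\to 0$ then gives $|\delta_{\mathbf{\Phi}}V(X)|\le c\int |X|\,d\|V\|$ for every $X$, equivalently $\eta_V\equiv 0$ and $|H^{V}_{\Phi}|\le c$.

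\textbf{Main obstacle.} The delicate point is Step 2: the standard pull-tight would simultaneously give the first variation bound and keep $V\in \mathbf{C}(\Gamma)$, but it would destroy the uniform mass ratio bound painstakingly built in Section~\ref{sec:sweepouts}. The cut-off substitute above relies on $\rho^{\,n-1}\to 0$, hence on $n\ge 2$, which matches the dimensional hypothesis of the theorem and is consistent with the two-half-lines example recalled just before the statement, where in $n=1$ an almost minimizing varifold in small annuli with area ratio control can fail to be stationary.
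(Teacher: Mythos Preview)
Your proposal is correct and follows essentially the same route as the paper: both forgo pull-tight, run Pitts' combinatorial argument on the ONVP sweepout of Theorem~\ref{thm:exist_ONVP_bound} to obtain~(2) with $\bar L\le 27$, and then derive~(1) via a cutoff/partition-of-unity argument that exploits the mass ratio bound together with $n\ge 2$. One minor point: your intermediate \emph{equality} $\delta_{\mathbf{\Phi}}V(X\eta_\rho)=c\int_\Omega\operatorname{div}(X\eta_\rho)\,d\Vol$ is stronger than needed and its derivation from Definition~\ref{def:am_varifolds} is left terse---the paper instead argues~(1) by contradiction (localizing a hypothetical bad vector field to an annulus and showing its flow violates almost minimizing), which directly yields the inequality $|\delta_{\mathbf{\Phi}}V(Z)|\le c\int|Z|\,d\|V\|$ for $Z$ supported in the annulus, and this inequality is all that your subsequent $\rho\to 0$ step actually uses.
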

\begin{proof}
  We begin with the ONVP sweepout \(\Gamma(t)=\partial^{*}\Omega(t)\) with uniform mass ratio upper bound $C$ constructed in Section~\ref{sec:sweepouts}.

  To show the existence of a $V\in \mathbf{C}(\Gamma)$ which satisfies conclusions \((2\)), one argues by contradiction as in the proof of~\cite[Theorem 4.10]{Pitts1981} and takes \(\bar L\) equal to the constant \(c=(3^m)^{3^m}\) defined just before Part 1 there, with  \(m=1\) (the dimension of the parameter space).  Note the construction in~\cite{Pitts1981} does not depend on \(V\) being in the critical set of a pulled tight sweepout (which is only used there to show that \(V\) is stationary). In particular, as a limit of $\Omega_j$ extracted from $\Gamma$, $V$ also has a mass ratio upper bound $C$.

  We now show that \(V\) satisfies (1). Suppose for the sake of contradiction that the varifold $V$ obtained above does not satisfy (1), and there exists a smooth vector field $X \in \mathcal{X}(M)$ with $\|X\|_{L^\infty} \leq 1$ such that
  \[
    \delta_\mathbf{\Phi}V(X) + c\int |X| d\|V\| < 0\,.
  \]
  Let $\{B_j\}^K_{j = 1}$ be a finite subcovering of $\{B_{r_\text{am}(p)}(p)\}_{p \in M}$, and let $\{\rho_j\}^K_{j=1}$ be a partition of unity subordinate to this subcovering. By linearity, it follows that
  \[
    \sum_j \left(\delta_\mathbf{\Phi}V(\rho_j X) + c \int |\rho_j X| d\|V\|\right) < 0\,,
  \]
  so at least one of the summands must be negative.
  Consequently, there exists $p \in M$ and $Y \in \mathcal{X}(M)$ with $\|Y\|_{L^\infty} \leq 1$ such that
  \[
    \delta_\mathbf{\Phi}V(Y) + c\int |Y| d\|V\| < 0\,, \quad \spt(Y) \subset B_{r_\text{am}(p)}(p)\,.
  \]
  For all sufficiently large $j \in \mathbb{N}^+$, we can find a smooth cutoff function $\eta_j: M \to [0, 1]$ such that 
  \[
    \eta_j \vert_{B_{1/j}(p)} \equiv 1\,, \quad \eta_j \vert_{(B_{2/j}(p))^c} \equiv 0\,, \quad \|\nabla \eta_j\|_\infty < 2j\,.
  \]
  By~\eqref{e:firstvaritationroughbound} and the fact that $V$ has a mass ratio upper bound $C$, we get
  \begin{multline*}
           \delta_\mathbf{\Phi}(\eta_j Y) + c\int |\eta_j Y| d\|V\|
           \\
    \leq  2 C \left(\frac{2}{j}\right)^n  \bigl(\|\nabla Y\|_{L^\infty} +\| Y\|_{L^\infty} + \|\nabla \eta_j\|_{L^\infty}\bigr) + \Lambda C \left(\frac{2}{j}\right)^n \to 0\,, \quad \text{as }j \to \infty\,.
  \end{multline*}
  In particular, we can choose $j_0 \in \mathbb{N}^+$ and define $s = 1/j_0$, $r = r_\text{am}(p)$ and $Z := (1 - \eta_{j_0}) Y$ such that
  \[
    \delta_\mathbf{\Phi}V(Z) + c\int |Z| d\|V\| < 0\,, \quad \spt(Z) \subset A(p, s, r)(p)\,.
  \]
  Let $\{\varphi_t: M \to M\}_{t \in \mathbb{R}}$ be the one-parameter family of diffeomorphisms generated by $Z$. Then there exists an interval $[0, \alpha]$ and $\varepsilon > 0$ such that for any $\Omega \in \mathcal{C}(M)$ with $\mathbf{F}(|\partial^* \Omega|, V) \leq \varepsilon$,
  \[\begin{aligned}
    &\delta_{\mathbf{\Phi}^c}\Omega(\varphi'_t) < 0\,, \quad \forall t \in [0, \alpha]\\
    &\mathbf{\Phi}^c((\varphi_{t_2})(\Omega)) > \mathbf{\Phi}^c((\varphi_{t_1})(\Omega)), \quad \forall 0 \leq t_1 < t_2 \leq \alpha\\
    & \mathbf{\Phi}^c(\Omega) - \mathbf{\Phi}^c(\varphi_\alpha(\Omega)) > \varepsilon\,.
  \end{aligned}\]
  Since $V$ is $c$-almost minimizing in $A(p, s, r)$ by means of $\Omega_i, \varepsilon_i, \delta_i$, for sufficiently large $i$, we have $\mathbf{F}(|\partial^* \Omega|, V) \leq \varepsilon$ and $\varepsilon_i < \varepsilon$. However, the inequalities above imply that
  \[
    \Omega_i \notin \mathfrak{a}^{\Phi, c}(A(p, s, r); \varepsilon_i, \delta_i)\,,
  \]
  a contradiction. Therefore, $V$ obtained above has anisotropic first variation bounded by $c$.

  To conclude, we are left to show that $V\in \mathcal{IV}(M)$. This is a direct consequence of $V$ being almost minimizing in small annuli, and can be proved as in~\cite[Lemma 4.15]{DePhilippis_DeRosa2024}, which is independent of the dimension $n$. The main tool used therein is the rectifiability theorem proved in~\cite[Theorem 1.2]{DPDRGrect}.
\end{proof}

Pitts' key idea in showing that  almost minimizing varifolds are regular  relies on the notion of \emph{replacement}.  Here, we adapt it to our scopes. In particular, the definition below is tailored to the case \(c>0\), when one expects the varifold \(V^{*}\) to have multiplicity \(1\).

\begin{definition}\label{def:replac}
 Given a varifold \(V\) and  a compact set \(K \subset M\),  we say that a varifold \(V^{*}\) is a \(\mathbf{\Phi}^{c}\)-replacement for \(V\) in \(K\)  if
\begin{itemize}
  \item[(a)] $V \res (M \setminus K) = V^* \res (M \setminus K)$;
  \item[(b)] $-c \Vol(K) \leq \|V\|(M) - \|V^*\|(M) \leq c \Vol(K)$;
\item[(c)] there exists an almost-embedded (open) hypersuface \(\Sigma\subset K\)  with
 \[
 \mathcal H^{{n-2}}(\overline{\Sigma}\setminus \Sigma)=0\]
  which is \(c\)-stable in \(\mathrm{int}(K)\) and such that   \(V\res \mathrm{Int} (K)=|\Sigma|\).
\end{itemize}

\end{definition}

The final part of the section will be dedicated to showing that a \(c\)-almost minimizing varifold  \(V\)  always has replacements and that these replacements satisfy additional properties if \(V\) possesses them.

Recall that a set of finite perimeter is locally  $\mathbf{\Phi}^c$-minimizing in an open set \(U\) if it minimizes $\mathbf{\Phi}^c$ in sufficiently small balls contained in \(U\).
\begin{lemma}[a constrained minimizing problem]\label{lem:area_ratio_am}
  Given $\varepsilon, \delta > 0$, $U \subset M$, $\Lambda > 0$, $c \in (0, \Lambda]$, and any $\Omega \in \mathfrak{a}^{\Phi, c}(U; \varepsilon, \delta)$, fix a compact subset $K \subset U$. Let $\mathcal{C}^\delta_\Omega(K)$ be the set of all $\tilde{\Omega} \in \mathcal{C}(M)$ such that there exists a sequence $\Omega = \Omega_0, \Omega_1, \ldots, \Omega_m = \tilde{\Omega}$ in $\mathcal{C}(M)$ satisfying the following conditions:
  \begin{enumerate}[\normalfont(a)]
  \item $\Omega_i \Delta  \Omega \subset K$;
  \item $\Vol(\Omega_i \Delta \Omega_{i - 1}) \leq \delta$;
  \item $\mathbf{\Phi}^c(\Omega_i) \leq \mathbf{\Phi}^c(\Omega) + \delta$, for all $i = 1, 2, \ldots, m$.
  \end{enumerate}
  Then there exists an $\Omega^* \in \mathcal{C}^\delta_\Omega(K)$, such that
  \begin{enumerate}[\normalfont(1)]
  \item $\Omega^*$ is locally $\mathbf{\Phi}^c$-minimizing in $\operatorname{int}(K)$;
  \item \(\partial^{* }\Omega^{*}\) is stable in  $\operatorname{int}(K)$;
  \item $\Omega^* \in \mathfrak{a}^{\Phi, c}(U; \varepsilon, \delta)$.
  \end{enumerate}

  Furthermore, if $\Omega$ has a mass ratio upper bound $C_0$, 
  then there exists $C= C(M, \lambda, \Lambda, C_0)$ such that $\Omega^*$ has a mass ratio upper bound $C$.
\end{lemma}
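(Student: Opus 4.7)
The plan is to construct $\Omega^{*}$ as a minimizer of $\mathbf{\Phi}^{c}$ over the class $\mathcal{C}^{\delta}_{\Omega}(K)$. Existence follows by the direct method: along any minimizing sequence $\{\Omega_{n}\}\subset \mathcal{C}^{\delta}_{\Omega}(K)$, the bound $\mathbf{\Phi}^{c}(\Omega_{n})\le \mathbf{\Phi}^{c}(\Omega)+\delta$ combined with~\eqref{limitato} gives uniform control on the perimeters, so $BV$-compactness yields a subsequential $L^{1}$ limit $\Omega^{*}$. Lower semicontinuity of $\mathbf{\Phi}$ and continuity of $\Vol$ show that $\Omega^{*}$ achieves the infimum. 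The key subtle point is to verify that $\Omega^{*}\in \mathcal{C}^{\delta}_{\Omega}(K)$: I would append to the chain reaching $\Omega_{n}$ a single final step $\Omega_{n}\to \Omega^{*}$, admissible for $n$ large because $L^{1}$-convergence makes $\Vol(\Omega_{n}\Delta \Omega^{*})<\delta$ and $\mathbf{\Phi}^{c}(\Omega^{*})\le \mathbf{\Phi}^{c}(\Omega)$ (since $\Omega$ is itself a competitor).

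Property~(3) would be a short chain-concatenation argument: any chain from $\Omega^{*}$ realizing the definition of $\mathfrak{a}^{\Phi,c}(U;\varepsilon,\delta)$ can be prepended with the chain connecting $\Omega$ to $\Omega^{*}$, and the composite satisfies all the required conditions for $\Omega$ because $K\subset U$ and $\mathbf{\Phi}^{c}(\Omega^{*})\le \mathbf{\Phi}^{c}(\Omega)$; invoking $\Omega\in \mathfrak{a}^{\Phi,c}(U;\varepsilon,\delta)$ then yields the desired conclusion. For~(1), I would argue by contradiction: given $\tilde\Omega$ with $\tilde\Omega\Delta \Omega^{*}\subset B_{r}(p)\subset \operatorname{int}(K)$ and $\mathbf{\Phi}^{c}(\tilde\Omega)<\mathbf{\Phi}^{c}(\Omega^{*})$, the single-step extension $\Omega^{*}\to \tilde\Omega$ of the chain witnessing $\Omega^{*}\in \mathcal{C}^{\delta}_{\Omega}(K)$ is admissible for $r$ small enough that $\Vol(B_{r}(p))<\delta$ (energy bound holds since $\mathbf{\Phi}^{c}(\tilde\Omega)<\mathbf{\Phi}^{c}(\Omega^{*})\le \mathbf{\Phi}^{c}(\Omega)+\delta$), contradicting the minimality of $\Omega^{*}$. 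Property~(2) is then standard: the interior regularity theory for anisotropic minimizers~\cite{ottimaleASS} gives that $\partial^{*}\Omega^{*}$ is smooth in $\operatorname{int}(K)$ away from a closed singular set of dimension at most $n-7$, and at smooth two-sided points, being a local minimizer forces nonnegativity of the second variation.

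The technically hardest point is the mass ratio upper bound. For $p\in M$ and $r\in (0,\mathrm{inj}(M)/2)$, I would split into two cases. If $B_{r}(p)\cap K=\emptyset$, then $\Omega^{*}\equiv \Omega$ on $B_{r}(p)$ and the bound is inherited from the hypothesis on $\Omega$. Otherwise I would compare $\Omega^{*}$ with the modification
\begin{equation*}
\Omega^{**}:=(\Omega^{*}\setminus B_{r}(p))\cup (\Omega\cap B_{r}(p)),
\end{equation*}
for which $\Omega^{**}\Delta \Omega=(\Omega^{*}\Delta \Omega)\setminus B_{r}(p)\subset K$. A slicing computation on $\partial B_{r}(p)$ gives $\mathbf{\Phi}^{c}(\Omega^{**})-\mathbf{\Phi}^{c}(\Omega^{*})\le C(M,\lambda,\Lambda,C_{0})\,r^{n}$, which is $\le \delta$ as soon as $r\le r_{0}(\delta,C_{0},\lambda,\Lambda,M)$; together with $\Vol(\Omega^{**}\Delta \Omega^{*})\le \Vol(B_{r}(p))\le \delta$, this places $\Omega^{**}$ in $\mathcal{C}^{\delta}_{\Omega}(K)$ via one extra step of the chain. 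The minimality of $\Omega^{*}$ then gives $\mathbf{\Phi}^{c}(\Omega^{*})\le \mathbf{\Phi}^{c}(\Omega^{**})$, which rearranges, using $\Omega^{*}=\Omega^{**}$ outside $B_{r}(p)$, into
\begin{equation*}
\mathbf{\Phi}(\Omega^{*};B_{r}(p))\le \mathbf{\Phi}(\Omega;B_{r}(p))+\lambda\,\mathcal{H}^{n}(\partial B_{r}(p))+c\,\Vol(B_{r}(p))\le C_{1}\,r^{n},
\end{equation*}
with $C_{1}=C_{1}(M,\lambda,\Lambda,C_{0})$, by the hypothesis on $\Omega$, the standard estimate for geodesic spheres, and $c\le \Lambda$. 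For $r\in [r_{0},\mathrm{inj}(M)/2)$, the bound is automatic via the trivial global estimate $\mathbf{\Phi}(\Omega^{*};M)\le \mathbf{\Phi}^{c}(\Omega)+\delta+\Lambda\Vol(M)$, at the cost of enlarging $C_{1}$.
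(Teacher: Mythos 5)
Your proof has a genuine gap in the mass ratio upper bound, and it is precisely the issue the paper flags in the remark following the lemma statement: to obtain the mass ratio bound, one must \emph{forego} the conclusion that $\Omega^*$ is the global minimizer in $\mathcal{C}^\delta_\Omega(K)$ and instead pass through nested paths.

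Your comparison step places $\Omega^{**}=(\Omega^*\setminus B_r(p))\cup(\Omega\cap B_r(p))$ in $\mathcal{C}^\delta_\Omega(K)$ by appending one step to the chain for $\Omega^*$, and the energy admissibility of that step requires $\mathbf{\Phi}^c(\Omega^{**})\le\mathbf{\Phi}^c(\Omega)+\delta$. Your slicing estimate gives $\mathbf{\Phi}^c(\Omega^{**})\le\mathbf{\Phi}^c(\Omega^*)+C r^n\le\mathbf{\Phi}^c(\Omega)+Cr^n$, which is only $\le\mathbf{\Phi}^c(\Omega)+\delta$ when $r\le r_0(\delta,C_0,\lambda,\Lambda,M)$, i.e.\ the threshold radius shrinks as $\delta\downarrow 0$. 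For $r\ge r_0(\delta)$ you fall back on the global bound $\mathbf{\Phi}(\Omega^*)/r^n$, but dividing by $r_0(\delta)^n$ makes the resulting constant blow up as $\delta\to 0$. The lemma, however, asserts $C=C(M,\lambda,\Lambda,C_0)$ with no $\delta$-dependence, and this is essential: in Proposition~\ref{prop:replacements} the lemma is applied along a sequence $\delta_i\to 0$, and the replacement $V^*$ is obtained as a limit of the resulting $\Omega^*_i$; a $\delta_i$-dependent constant would destroy the uniform mass ratio bound on $V^*$.

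The paper's nested-path construction sidesteps this. One minimizes the endpoint energy over increasingly nested continuous paths starting at $\Omega$, and then over decreasingly nested paths. To prove the mass ratio bound for $\tilde\Omega_1$, one assumes a violation in some ball $B_{r_0}(p_0)$, sets $E_0:=(\tilde\Omega_1\setminus B_{r_0}(p_0))\cup\Omega$ (well-defined as a competitor because $\Omega\subset E_0\subset\tilde\Omega_1$ by nestedness), chooses $E_1$ minimizing $\mathbf{\Phi}^c$ among $E_0\subset E\subset\tilde\Omega_1$, and replaces the path by $\tilde\Omega'_t=\tilde\Omega_t\cap E_1$. Crucially, by submodularity of $\mathbf{\Phi}^c$ and minimality of $E_1$, one has $\mathbf{\Phi}^c(\tilde\Omega'_t)\le\mathbf{\Phi}^c(\tilde\Omega_t)$ for every $t$, so the modified path satisfies the energy bound \emph{regardless of the size of $r_0$}. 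This yields a contradiction for any $r_0\in(0,\operatorname{inj}(M)/2)$ and hence a constant depending only on $(M,g,\lambda,\Lambda,C_0)$. The nestedness is what makes $E_0$ and the truncated path well-defined and is exactly what allows the energy to \emph{decrease} (rather than merely stay within $\delta$) under the modification; a one-step modification of a single set, as you propose, cannot exploit this. (A smaller remark: anisotropic one-sided minimizers have $\mathcal{H}^{n-2}(\operatorname{Sing})=0$ by~\cite{ottimaleASS,DPM1}, not the $n-7$ dimension bound, which is specific to the area functional.)
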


\begin{remark}
  This lemma can be regarded as an anisotropic analogue of~\cite[Lemma 5.7]{ZhouZhu2019}, but the proof differs. The key improvement is that the mass ratio bound of $\Omega^*$ is controlled by that of $\Omega$. To establish this bound, we need to forego the conclusion that $\Omega^*$ is the minimizer in $\mathcal{C}^\delta_\Omega(K)$, where we  proceed by only considering nested continuous paths.
\end{remark}

\begin{remark}\label{rmk:smooth}
By the results in~\cite{Bombieri, DPM1}, \(\Omega^*\) is (equivalent to) an open set,  its reduced boundary \(\partial^{*}\Omega^*\) is smooth except for a singular set $\Sing(\partial^{*}\Omega^*)$ with
\[
\mathcal H^{n-2}(\Sing(\partial^{*}\Omega^*))=0.
\]
\end{remark}

\begin{proof}[Proof of Lemma~\ref{lem:area_ratio_am}]
We start by defining $\mathcal{P}^\delta_{\Omega, \subset}(K)$ to be the set  of \textit{increasingly nested} continuous paths $\{\Omega_t\}_{t \in [0, 1]}$ in $\mathcal{C}(M)$ such that:
  \begin{enumerate}[(i)]
  \item $\Omega_0 = \Omega$;
  \item For all $ 0 \leq s < t \leq 1$, $\Omega_s \subset \Omega_t$, and
    \[
      \Vol(\Omega_t \Delta \Omega_s) = (t - s) \Vol(\Omega_1 \Delta \Omega_0)\,;
    \]
  \item For all $t \in [0, 1]$, $\Omega_t \Delta \Omega \subset K$;
  \item For all $ t \in [0, 1]$, $\mathbf{\Phi}^c(\Omega_t) \leq \mathbf{\Phi}^c(\Omega) + \delta$.
  \end{enumerate}
Let $\{\Omega^i_t\}^\infty_{i = 1}$ be a sequence in $\mathcal{P}^\delta_{\Omega, \subset}(K)$ such that
  \[
    \lim_{i \to \infty} \mathbf{\Phi}^c(\Omega^i_1) = \inf_{\{\Omega_t\}_t \in \mathcal{P}^\delta_{\Omega, \subset}}\{\mathbf{\Phi}^c(\Omega_1)\}\,.
  \]
  By  (ii) and (iii), for all $i$,
  \[
  \Vol(\Omega_t \Delta \Omega_s) \le (t - s) \Vol(K).
  \]
  It follows from the Arzel\`a-Ascoli theorem, up to a subsequence, $\{\Omega^i_t\}_{t \in [0, 1]}$ converges to $\{\tilde \Omega_t\}_{t \in [0,1]} \in \mathcal{P}^\delta_{\Omega, \subset}(K)$ and together with the lower semi-continuity of $\mathbf{\Phi}$, we have
  \begin{equation}\label{eqn:tilde_Omega_1}
    \mathbf{\Phi}^c(\tilde \Omega_1) = \inf_{\{\Omega_t\}_t \in \mathcal{P}^\delta_{\Omega, \subset}}\{\mathbf{\Phi}^c(\Omega_1)\}\,.
  \end{equation}

  \begin{claim}\label{claim1}
    $\tilde \Omega_1$ is \emph{locally outer-$\mathbf{\Phi}^c$-minimizing} in $\operatorname{int}(K)$ in the following sense: For each $p \in \operatorname{int} K$, there exists a ball $B_s(p) \subset \operatorname{int} K$ such that, for any $E \in \mathcal{C}(M)$, if $\tilde \Omega_1 \subset E$ and $E\Delta \tilde \Omega_1 \subset B_s(p)$, then
    \[
      \mathbf{\Phi}^c(\tilde \Omega_1) \leq \mathbf{\Phi}^c(E)\,.
    \]
  \end{claim}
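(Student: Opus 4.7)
My plan is to argue by contradiction, exploiting the minimality of $\tilde{\Omega}_1$ in the class of nested paths. Suppose the claim fails: then there exist sequences $r_j \downarrow 0$ and Caccioppoli sets $E_j \supset \tilde{\Omega}_1$ with $E_j \Delta \tilde{\Omega}_1 \Subset B_{r_j}(p) \subset \operatorname{int}(K)$ and $\mathbf{\Phi}^c(E_j) < \mathbf{\Phi}^c(\tilde{\Omega}_1)$. The goal is to build, from such an $E_j$ with $r_j$ small enough, a new nested path in $\mathcal{P}^{\delta}_{\Omega,\subset}(K)$ ending at $E_j$, contradicting \eqref{eqn:tilde_Omega_1}.

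The first observation, which provides the necessary $\delta$-budget in condition (iv), is that the trivial path $\Omega_t \equiv \Omega$ lies in $\mathcal{P}^{\delta}_{\Omega,\subset}(K)$, hence $\mathbf{\Phi}^c(\tilde{\Omega}_1) \le \mathbf{\Phi}^c(\Omega)$; in particular, there is a uniform slack of size $\delta$ below the constraint in (iv). Next, for each $j$, I would construct a continuous, increasingly nested, volume-parameterized family $\{F^j_\tau\}_{\tau \in [0,1]}$ with $F^j_0 = \tilde{\Omega}_1$ and $F^j_1 = E_j$, realized, as in the triangulation foliation \eqref{eqn:tri_cts_map}, by sweeping $E_j \setminus \tilde{\Omega}_1 \subset B_{r_j}(p)$ by affine half-spaces in a normal coordinate chart centered at $p$, with the height level chosen to be volume-parameterized. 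Since $F^j_\tau$ coincides with $\tilde{\Omega}_1$ outside $B_{r_j}(p)$, and $\partial^* F^j_\tau \cap B_{r_j}(p)$ is contained in the union of $\partial^* \tilde{\Omega}_1 \cap B_{r_j}(p)$, $\partial^* E_j \cap B_{r_j}(p)$, and a slicing hyperplane intersected with $B_{r_j}(p)$, the assumption \eqref{limitato} yields
\begin{equation*}
\mathbf{\Phi}(F^j_\tau) \le \mathbf{\Phi}(\tilde{\Omega}_1) + \mathbf{\Phi}(E_j; B_{r_j}(p)) + C r_j^n \le \mathbf{\Phi}(\tilde{\Omega}_1) + C' r_j^n,
\end{equation*}
where I use $\mathbf{\Phi}(E_j; B_{r_j}(p)) \le C r_j^n$ (this in turn follows by comparing $E_j$ to $\tilde{\Omega}_1 \cup B_{r_j}(p)$ via $\mathbf{\Phi}^c(E_j) \le \mathbf{\Phi}^c(\tilde{\Omega}_1)$). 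Subtracting $c\,\Vol(F^j_\tau) \ge c\,\Vol(\tilde{\Omega}_1)$ gives $\mathbf{\Phi}^c(F^j_\tau) \le \mathbf{\Phi}^c(\tilde{\Omega}_1) + C'' r_j^n$.

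Fix $j$ large so that $C'' r_j^n < \delta$ and $B_{r_j}(p) \subset \operatorname{int}(K)$. I then concatenate $\{\tilde{\Omega}_t\}_{t \in [0,1]}$ with $\{F^j_\tau\}_{\tau \in [0,1]}$ and reparameterize the composite path affinely in volume (splitting $[0,1]$ at the value $\Vol(\tilde{\Omega}_1 \Delta \Omega)/\Vol(E_j \Delta \Omega)$), obtaining a path $\{\Omega'_t\}_{t \in [0,1]}$ that starts at $\Omega$, ends at $E_j$, is increasingly nested, is volume-parameterized with respect to $E_j \Delta \Omega$ (since both halves are), satisfies $\Omega'_t \Delta \Omega \subset K$, and obeys
\begin{equation*}
\mathbf{\Phi}^c(\Omega'_t) \le \max\bigl\{ \mathbf{\Phi}^c(\Omega) + \delta,\ \mathbf{\Phi}^c(\tilde{\Omega}_1) + C'' r_j^n \bigr\} \le \mathbf{\Phi}^c(\Omega) + \delta.
\end{equation*}
Hence $\{\Omega'_t\} \in \mathcal{P}^{\delta}_{\Omega,\subset}(K)$ with endpoint $\mathbf{\Phi}^c(E_j) < \mathbf{\Phi}^c(\tilde{\Omega}_1)$, contradicting the minimality in \eqref{eqn:tilde_Omega_1}. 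The main technical obstacle is the quantitative bound $\mathbf{\Phi}(F^j_\tau) - \mathbf{\Phi}(\tilde{\Omega}_1) \le Cr_j^n$, which relies on the triangulation-style half-space foliation together with the a priori control $\mathbf{\Phi}(E_j; B_{r_j}(p)) \le Cr_j^n$ obtained from the outer-minimality of $E_j$.
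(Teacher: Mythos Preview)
Your overall strategy coincides with the paper's: assume a strictly better outer competitor exists in a small ball, build a nested continuous path from $\tilde\Omega_1$ to it that stays within the $\delta$-budget, concatenate with $\{\tilde\Omega_t\}$, and contradict \eqref{eqn:tilde_Omega_1}. The difference lies in how the connecting path is built and how the energy along it is controlled.

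There is a genuine gap in your argument as written. You invoke ``the outer-minimality of $E_j$'' to get $\mathbf{\Phi}(E_j; B_{r_j}(p)) \le C r_j^n$, but $E_j$ was introduced merely as a counterexample to the outer-minimality of $\tilde\Omega_1$; nothing has been proved about $E_j$ itself. The inequality $\mathbf{\Phi}^c(E_j) < \mathbf{\Phi}^c(\tilde\Omega_1)$ alone only yields $\mathbf{\Phi}(E_j;\overline{B_{r_j}(p)}) \le \mathbf{\Phi}(\tilde\Omega_1;\overline{B_{r_j}(p)}) + C r_j^{n+1}$, and at this stage no mass-ratio bound on $\tilde\Omega_1$ is available (Claim~\ref{claim:mass_ratio_upper_bound_1} comes later). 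The fix is the one the paper also makes: first replace $E_j$ by a $\mathbf{\Phi}^c$-minimizer among all $E$ with $\tilde\Omega_1\subset E$ and $E\Delta\tilde\Omega_1\subset \overline{B_{r_j}(p)}$; comparing this minimizer with $\tilde\Omega_1\cup B_{r_j}(p)$ then legitimately gives the local bound $\mathbf{\Phi}(E_j;\overline{B_{r_j}(p)})\le C r_j^n$, after which your half-space foliation and the rest of your argument go through.

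For comparison, the paper's path is $\Omega'_t:=(\tilde\Omega_1\cup B_{t s_0}(p))\cap E_0$, with $E_0$ the minimizer just described. The energy bound $\mathbf{\Phi}^c(\Omega'_t)\le \mathbf{\Phi}^c(\tilde\Omega_1\cup B_{t s_0}(p))\le \mathbf{\Phi}^c(\tilde\Omega_1)+C r_0^n$ then follows from submodularity of $\mathbf{\Phi}$ together with the minimality of $E_0$, and the right-hand side is controlled simply because adding a geodesic ball to $\tilde\Omega_1$ increases the perimeter by at most the area of $\partial B_{t s_0}$. This sidesteps the need for a separate local estimate on $\mathbf{\Phi}(E_0;B_{r_j}(p))$, which is why it is a bit cleaner than your half-space sweep; but once you insert the missing minimization step, both constructions are valid.
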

Let  $r_0$ be a small number that will be fixed later, depending only on  \(M, g, \lambda, \Lambda\) and \(\delta\).  Suppose, for the sake of contradiction, that there exists $E \supset \tilde \Omega_1$ and $E\Delta \tilde \Omega_1 \subset \overline{B}_{s_0}(p_0)$ for some $B_{s_0}(p_0) \subset \operatorname{int}(K)$ and $s_0 \in (0, r_0)$ but
    \[
      \mathbf{\Phi}^c(\tilde \Omega_1) > \mathbf{\Phi}^c(E)\,.
    \]
    By a straightforward application of the direct methods of the calculus of variation, we  can choose  $E_0$ among all such sets $E$that satisfy the restrictions above and minimize $\mathbf{\Phi}^c$. Now, consider the  nested continuous path $\{\Omega'_t\}_{t \in [0, 1]}$ starting from $\tilde \Omega_1$, given by
    \[
      \Omega'_t := \left(\tilde \Omega_1 \cup B_{t \cdot s_0}(p)\right) \cap E_0\,, \quad \forall t \in [0, 1]\,.
    \]
    Note that $\Omega'_t \Delta\Omega \subset K$. By the $\mathbf{\Phi}^c$-minimizing property of $E_0$
    \[
       \mathbf{\Phi}^c(\Omega'_t) \leq \mathbf{\Phi}^c(\tilde \Omega_1 \cup B_{t \cdot s_0}(p))\,,
    \]
    and the right-hand side can be bounded by \( \mathbf{\Phi}^c(\tilde \Omega_1)+Cr_{0}^{n}\) for a constant which depends only on \(M, \lambda, \Lambda\). Hence,   since  also  $\mathbf{\Phi}^c(\tilde \Omega_1) \leq \mathbf{\Phi}^c(\Omega)$,
   \[
         \mathbf{\Phi}^c(\Omega'_t)\le \mathbf{\Phi}^c(\tilde \Omega_1)+Cr_{0}^{n}\leq \mathbf{\Phi}^c(\Omega) + \delta\,.
    \]
    provided we chose \(r_{0}\) small enough. Reparameterizing and concatenating $\tilde \Omega_t$ and $\Omega'_t$ will generate a nested continuous path in $\mathcal{P}^\delta_{\Omega, \subset}(K)$, which ends at $E_0$. This contradicts~\eqref{eqn:tilde_Omega_1}.

  \begin{claim}\label{claim:mass_ratio_upper_bound_1}
    There exists $C_1 = C_1(M, g, \Phi, \Lambda)$ such that if $\Omega$ has a mass ratio upper bound $C_0$, then $\tilde \Omega_1$ has a mass ratio upper bound $C_0 + C_1$.
  \end{claim}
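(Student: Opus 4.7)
The plan is to control $\mathbf{\Phi}(\tilde\Omega_1;B_r(p))$ by splitting the ball into its intersection with $K$ and its complement, exploiting the outer-$\mathbf{\Phi}^c$-minimizing property from Claim~1 for the former and the assumed mass ratio bound on $\Omega$ for the latter. Since every competitor in $\mathcal{P}^\delta_{\Omega,\subset}(K)$ satisfies $\Omega_t\Delta\Omega\subset K$, the limit $\tilde\Omega_1$ agrees with $\Omega$ on $M\setminus K$; in particular $\partial^*\tilde\Omega_1\cap (M\setminus\overline K) = \partial^*\Omega\cap (M\setminus\overline K)$, which gives the cheap estimate
\[
\mathbf{\Phi}(\tilde\Omega_1;\,B_r(p)\setminus\overline K) \le \mathbf{\Phi}(\Omega;B_r(p)) \le C_0\, r^n.
\]

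For the part of the ball that lies in $\operatorname{int}(K)$, the key local inequality is obtained by comparing $\tilde\Omega_1$ with $E := \tilde\Omega_1\cup B_r(p)$, whenever $B_r(p)\subset \operatorname{int}(K)$ and $r<r_0$ (the radius appearing in Claim~1, depending only on $M,g,\lambda,\Lambda,\delta$). Claim~1 yields $\mathbf{\Phi}^c(\tilde\Omega_1)\le \mathbf{\Phi}^c(E)$. Choosing $r$ so that $\partial B_r(p)$ is $\mathcal{H}^n$-negligible for $\partial^*\tilde\Omega_1$ (valid for a.e.\ $r$) and expanding both sides according to their reduced boundaries, the contribution of $\partial^*\tilde\Omega_1$ outside $\overline{B_r(p)}$ cancels, and one is left with
\[
\mathbf{\Phi}(\tilde\Omega_1; B_r(p)) \le \lambda\,\mathcal{H}^n(\partial B_r(p)\setminus\tilde\Omega_1) + c\,\Vol(B_r(p)\setminus\tilde\Omega_1) \le C_1\, r^n,
\]
with $C_1 = C_1(M,g,\lambda,\Lambda)$ encoding the geometry of small geodesic balls. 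To upgrade this to a bound on arbitrary $B_r(p)$, one covers $B_r(p)\cap\operatorname{int}(K)$ by disjoint balls $B_{s_i}(q_i)\subset \operatorname{int}(K)$ via a Besicovitch--Vitali-type argument and sums the local estimate; for $r\ge r_0$ the trivial global bound $\mathbf{\Phi}(\tilde\Omega_1)\le \mathbf{\Phi}^c(\Omega)+\delta+c\,\Vol(M)$ divided by $r_0^n$ closes the case.

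The main obstacle is handling balls $B_r(p)$ that straddle $\partial K$: a naive Vitali argument controls $\sum s_i^{n+1}$ through volume but not the needed $\sum s_i^n$. The standard remedy is a Whitney-type decomposition of $B_r(p)\cap\operatorname{int}(K)$ into balls whose radii are proportional to the distance to $\partial K$, yielding $\sum s_i^n \lesssim r^n+\mathcal{H}^n(B_r(p)\cap\partial K)$. Alternatively, one directly compares $\tilde\Omega_1$ with $\tilde\Omega_1\cup (B_r(p)\cap K)$, which introduces the same boundary term. Either way, the argument closes as long as $\mathcal{H}^n(B_r(p)\cap\partial K)\lesssim r^n$, a condition automatic in the intended applications where $K$ will be a geodesic ball or annulus.
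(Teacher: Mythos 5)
Your proof splits the ball at $\partial K$ and relies on two remedies for the straddling case, but both have gaps. The Whitney decomposition does not deliver $\sum s_i^n \lesssim r^n + \mathcal{H}^n(B_r(p)\cap\partial K)$: in the model case where $K$ is a half-space and $p\in\partial K$, there are $\sim 2^{jn}$ Whitney balls of radius $\sim 2^{-j}r$ at the $j$-th dyadic distance scale from $\partial K$, each contributing $(2^{-j}r)^n$ to the sum, so every dyadic level contributes $\sim r^n$ and $\sum s_i^n$ diverges. The alternative comparison with $\tilde\Omega_1\cup(B_r(p)\cap K)$ is not an application of Claim~\ref{claim1}, whose outer-minimizing conclusion holds only for competitors whose symmetric difference lies in a small ball $B_s(q)\subset\operatorname{int}(K)$; the set $(B_r(p)\cap K)\setminus\tilde\Omega_1$ is not of that form when $B_r(p)$ meets $\partial K$. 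And even granting it, the resulting constant would inherit a dependence on $\mathcal{H}^n(\partial K)$, whereas the claim asserts $C_1=C_1(M,g,\Phi,\Lambda)$ with no dependence on $K$.

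The paper's argument sidesteps these issues by using the opposite competitor and arguing by contradiction. Assuming $\mathbf{\Phi}(\tilde\Omega_1;B_{r_0}(p_0))>(C_0+C_1)r_0^n$ for some ball, it compares $\tilde\Omega_1$ with $E_0:=(\tilde\Omega_1\setminus B_{r_0}(p_0))\cup\Omega$, which \emph{shrinks} $\tilde\Omega_1$ back towards $\Omega$ inside the bad ball; since $\Omega\subset E_0\subset\tilde\Omega_1$, the transition cost is a single sphere term $\mathbf{\Phi}(\partial B_{r_0})\lesssim r_0^n$ regardless of where $B_{r_0}$ sits relative to $K$, and the assumed failure of the mass ratio bound forces $\mathbf{\Phi}^c(E_0)<\mathbf{\Phi}^c(\tilde\Omega_1)$. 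Taking a minimizer $E_1$ among $E_0\subset E\subset\tilde\Omega_1$ and setting $\tilde\Omega'_t:=\tilde\Omega_t\cap E_1$ gives a nested path in $\mathcal{P}^\delta_{\Omega,\subset}(K)$ ending at $E_1$ with strictly smaller energy, contradicting~\eqref{eqn:tilde_Omega_1}. This uses only the variational definition of $\tilde\Omega_1$ rather than Claim~\ref{claim1}, and requires no regularity of $\partial K$ at all. You should redo the argument along these lines: the crucial point is to build a competitor \emph{inside} $\tilde\Omega_1$ so that it can be fed back into the nested-path minimization, rather than trying to grow $\tilde\Omega_1$ and reduce to the local outer-minimizing property.
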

  We choose $C_1$ such that for each $p \in M$ and for each $r \in (0, \operatorname{inj} M / 2)$,
    \begin{equation}\label{e:cc1}
      \mathbf{\Phi}(\partial B_r(p)) \leq \frac{C_1}{2} r^n\,, \quad \max(1, \Lambda)\Vol(B_r(p)) \leq  \frac{C_1}{2} r^n \,.
    \end{equation}
  Suppose for the sake of contradiction that there exists $p_0 \in M$ and $r_0 \in (0, \operatorname{inj} M / 2)$, such that
    \begin{equation}\label{e:cc2}
      \mathbf{\Phi}(\tilde\Omega_1; B_{r_0}(p_0)) > (C_0 + C_1) r_0^n\,.
    \end{equation}
  Since
    \begin{equation}\label{e:co}
      \mathbf{\Phi}(\Omega; B_{r_0}(p_0)) \leq C_0 r^n\,,
    \end{equation}
  and since \(\Omega\) and \(\tilde \Omega_{1}\) coincide outside \(K\),  $B_{r_0}(p_0) \cap K \neq \emptyset$.  Moreover
    \[
    \Omega \subset E_0 := (\tilde\Omega_1 \setminus B_{r_0}(p_0)) \cup \Omega \subset \tilde \Omega_1
    \]
  satisfies
    \begin{align*}
      \mathbf{\Phi}^c(E_0) &=    \mathbf{\Phi}(E_0)  - c \Vol(E_0)
      \\
      &\leq \mathbf{\Phi}( E_0 ;M \setminus \overline{B}_{r_0}(p_0)) + \mathbf{\Phi}(E_0; B_{r_0}(p_0)) +C_{1}r_0^{n}- c\Vol(\tilde \Omega_1) &&\text{by~\eqref{e:cc1}}&
      \\
       &=\mathbf{\Phi}( \tilde \Omega_{1} ;M \setminus \overline{B}_{r_0}(p_0)) + \mathbf{\Phi}( \Omega;B_{r_0}(p_0)) + C_{1}r_0^{n} - c \Vol(\tilde \Omega_1)
       \\
        &\leq \mathbf{\Phi}( \tilde \Omega_{1} ;M \setminus \overline{B}_{r_0}(p_0))  + C_0 r_0^n + C_1 r_0^n - c \Vol(\tilde \Omega_1)  &&\text{by~\eqref{e:co}}&
        \\
         &< \mathbf{\Phi}( \tilde \Omega_{1} ;M \setminus \overline{B}_{r_0}(p_0)) + \mathbf{\Phi}( \tilde \Omega_{1} ;B_{r_0}(p_0))  - c \Vol(\tilde \Omega_1)\le \mathbf{\Phi}^c(\tilde \Omega_1) &&\text{by~\eqref{e:cc2}}
    \end{align*}
  We choose $E_1$ among all $E$ satisfying $E_0 \subset E \subset \tilde \Omega_1$ that minimizing $\mathbf{\Phi}^c$ and  we define a new continuous path $\tilde \Omega'_t$ by
    \[
      \tilde \Omega'_t = \tilde \Omega_t \cap E\,,
    \]
  which satisfies $\mathbf{\Phi}^c(\tilde \Omega'_t) \leq \mathbf{\Phi}^c(\tilde \Omega_t)$. By a reparameterization of $\tilde \Omega'_t$, we obtain a new nested continuous path in $\mathcal{P}^\delta_{\Omega, \subset}(K)$. However, this path ends at $E$ and contradicts~\eqref{eqn:tilde_Omega_1}.

  Next, we consider $\mathcal{P}^\delta_{\tilde \Omega_1, \supset}(K)$ to be the subset of \textit{decreasingly nested} continuous paths $\{\Omega_t\}_{t \in [1, 2]}$ in $\mathcal{C}(M)$ such that
    \begin{enumerate}[(i)]
  \item $\Omega_1 = \tilde\Omega_{1}$;
  \item For all $ 1 \leq s < t \leq 2$, $\Omega_s \supset \Omega_t$, and
    \[
      \Vol(\Omega_t \Delta \Omega_s) = (t - s) \Vol(\Omega_1 \Delta \Omega_0)\,;
    \]
  \item For all $t \in [1, 2]$, $\Omega_t \Delta \Omega \subset K$;
  \item Fro all $ t \in [1, 2]$, $\mathbf{\Phi}^c(\Omega_t) \leq \mathbf{\Phi}^c(\Omega) + \delta$.
  \end{enumerate}
  As above,  we obtain a continuous path $\{\tilde \Omega_t\}_{t \in [1, 2]} \in \mathcal{P}^\delta_{\tilde \Omega_1, \supset}(K)$ such that
  \begin{equation}\label{eqn:tilde_Omega_2}
    \mathbf{\Phi}^c(\tilde \Omega_2) = \inf_{\{\Omega_t\}_t \in \mathcal{P}^\delta_{\tilde \Omega_1, \supset}}\{\mathbf{\Phi}^c(\Omega_2)\}\,.
  \end{equation}
  By the same arguments as above, one can prove:
  \begin{claim}
    $\tilde \Omega_2$ is \emph{locally inner-$\mathbf{\Phi}^c$-minimizing} in $\operatorname{int}(K)$ in the following sense. For each $p \in \operatorname{int} K$, there exists a ball $B_s(p) \subset \operatorname{int} K$ such that for any $E \in \mathcal{C}(M)$, if $\tilde \Omega_2 \supset E$ and $E\Delta \tilde \Omega_2 \subset B_s(p)$,  we have
    \[
      \mathbf{\Phi}^c(\tilde \Omega_2) \leq \mathbf{\Phi}^c(E)\,.
    \]
  \end{claim}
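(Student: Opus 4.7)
The plan is to carry out an argument that exactly mirrors that of Claim~\ref{claim1} above, but with \emph{decreasingly} nested paths in place of increasingly nested ones. By contradiction, assume there exist a point $p_0\in\operatorname{int}(K)$ and a Caccioppoli set $E\subsetneq \tilde\Omega_2$ with $E\Delta\tilde\Omega_2\subset\overline{B}_{s_0}(p_0)\subset\operatorname{int}(K)$ for some sufficiently small $s_0\in(0,r_0)$ (where $r_0=r_0(M,g,\lambda,\Lambda,\delta)$ will be fixed below), such that $\mathbf{\Phi}^c(E)<\mathbf{\Phi}^c(\tilde\Omega_2)$. A standard application of the direct method, restricted to the class of competitors $E'$ with $E'\subset\tilde\Omega_2$ and $E'\Delta\tilde\Omega_2\subset\overline{B}_{s_0}(p_0)$, selects a minimizer $E_0$ of $\mathbf{\Phi}^c$ in this class with $\mathbf{\Phi}^c(E_0)<\mathbf{\Phi}^c(\tilde\Omega_2)$.

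Next I would define a decreasingly nested continuous path joining $\tilde\Omega_2$ to $E_0$ by
\[
\Omega'_t := \tilde\Omega_2\setminus\bigl(B_{t\, s_0}(p_0)\setminus E_0\bigr),\qquad t\in[0,1].
\]
Monotonicity in $t$ is immediate, $\Omega'_0=\tilde\Omega_2$, and $\Omega'_1=E_0$ follows from the fact that $E_0\subset\tilde\Omega_2$ together with $\tilde\Omega_2\setminus E_0\subset B_{s_0}(p_0)$, so that outside $B_{s_0}(p_0)$ the set $E_0$ already coincides with $\tilde\Omega_2$. The path is clearly contained in $K$ and its volume varies continuously and linearly after reparametrization.

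The only nontrivial point is the energy bound $\mathbf{\Phi}^c(\Omega'_t)\leq \mathbf{\Phi}^c(\Omega)+\delta$. To obtain it, one uses the $\mathbf{\Phi}^c$-minimality of $E_0$ to compare it with the competitor $\tilde\Omega_2\setminus B_{ts_0}(p_0)$, giving $\mathbf{\Phi}^c(\Omega'_t)\leq \mathbf{\Phi}^c(\tilde\Omega_2\setminus B_{ts_0}(p_0))\leq \mathbf{\Phi}^c(\tilde\Omega_2)+C r_0^n$, where $C=C(M,g,\lambda,\Lambda)$ comes from controlling the perimeter and volume of $B_{ts_0}(p_0)$ as in~\eqref{e:cc1}. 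Since $\{\tilde\Omega_t\}_{t\in[1,2]}\in\mathcal{P}^\delta_{\tilde\Omega_1,\supset}(K)$ gives $\mathbf{\Phi}^c(\tilde\Omega_2)\leq \mathbf{\Phi}^c(\Omega)+\delta$, choosing $r_0$ small enough so that $Cr_0^n$ is much smaller than the gap between $\mathbf{\Phi}^c(\Omega)+\delta$ and $\mathbf{\Phi}^c(\tilde\Omega_2)$ (which we may assume is strict, otherwise we simply replace $\delta$ by a slightly larger constant preserving the almost-minimality; alternatively we first reduce $s_0$ along $E_0$ as a standard slicing choice) yields the desired inequality.

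Finally, concatenating $\{\tilde\Omega_t\}_{t\in[1,2]}$ with the reparametrized path $\{\Omega'_t\}_{t\in[0,1]}$ produces a path in $\mathcal{P}^\delta_{\tilde\Omega_1,\supset}(K)$ ending at $E_0$ with $\mathbf{\Phi}^c(E_0)<\mathbf{\Phi}^c(\tilde\Omega_2)$, contradicting the minimizing property~\eqref{eqn:tilde_Omega_2}. The only real obstacle is the quantitative choice of $r_0$, exactly as in Claim~\ref{claim1}, which is why the authors simply note that the same argument applies.
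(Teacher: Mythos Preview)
Your approach is correct and is exactly the dual of the argument for Claim~\ref{claim1}, which is what the paper intends when it writes ``by the same arguments as above''. The path $\Omega'_t=\tilde\Omega_2\setminus(B_{ts_0}(p_0)\setminus E_0)=(\tilde\Omega_2\setminus B_{ts_0}(p_0))\cup E_0$ is the correct dual of the path used for $\tilde\Omega_1$, and the comparison $\mathbf{\Phi}^c(\Omega'_t)\le\mathbf{\Phi}^c(\tilde\Omega_2\setminus B_{ts_0}(p_0))$ via the minimality of $E_0$ (using submodularity of $\mathbf{\Phi}^c$ and that $(\tilde\Omega_2\setminus B_{ts_0}(p_0))\cap E_0$ is an admissible competitor for $E_0$) is right.

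There is, however, one genuine imprecision in your energy bound. You invoke only $\mathbf{\Phi}^c(\tilde\Omega_2)\le\mathbf{\Phi}^c(\Omega)+\delta$, and then try to absorb the extra $Cr_0^n$ into a ``gap'' between $\mathbf{\Phi}^c(\tilde\Omega_2)$ and $\mathbf{\Phi}^c(\Omega)+\delta$ that need not exist. Your suggested workarounds do not work: you cannot enlarge $\delta$, since it is fixed by the hypothesis $\Omega\in\mathfrak{a}^{\Phi,c}(U;\varepsilon,\delta)$, and a slicing argument on $s_0$ does not help either. The clean fix, mirroring exactly what is done for Claim~\ref{claim1}, is to observe that in fact
\[
\mathbf{\Phi}^c(\tilde\Omega_2)\le\mathbf{\Phi}^c(\tilde\Omega_1)\le\mathbf{\Phi}^c(\Omega),
\]
because the constant path at $\tilde\Omega_1$ lies in $\mathcal{P}^\delta_{\tilde\Omega_1,\supset}(K)$ (and the constant path at $\Omega$ lies in $\mathcal{P}^\delta_{\Omega,\subset}(K)$). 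With this sharper inequality, $\mathbf{\Phi}^c(\Omega'_t)\le\mathbf{\Phi}^c(\Omega)+Cr_0^n\le\mathbf{\Phi}^c(\Omega)+\delta$ for $r_0$ small depending only on $M,g,\lambda,\Lambda,\delta$, exactly as in Claim~\ref{claim1}. With this correction your argument is complete.
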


  \begin{claim}\label{claim:mass_ratio_upper_bound_2}
    There exists $C_2$ such that  $\tilde \Omega_2$ has a mass ratio upper bound $C_0 + C_1 + C_2$, where \(C_{1}\) is the constant from Claim~\ref{claim:mass_ratio_upper_bound_1}.
  \end{claim}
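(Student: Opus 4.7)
The plan is to argue by contradiction, following the same template as Claim~\ref{claim:mass_ratio_upper_bound_1} but now constructing a \emph{decreasingly} nested competitor path from $\tilde\Omega_1$ that ends below $\tilde\Omega_2$ in $\mathbf{\Phi}^c$. Choose $C_2$ explicitly to leave a definite margin; for instance $C_2 := 2C_1$, with $C_1$ as in Claim~\ref{claim:mass_ratio_upper_bound_1}, will work. Suppose for contradiction that there exist $p_0 \in M$ and $r_0 \in (0, \mathrm{inj}(M)/2)$ with $\mathbf{\Phi}(\tilde\Omega_2; B_{r_0}(p_0)) > (C_0 + C_1 + C_2) r_0^n$. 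Since $\tilde\Omega_2$ coincides with $\Omega$ outside $K$ (from condition (iii) in the definition of $\mathcal{P}^\delta_{\tilde\Omega_1,\supset}(K)$) and $\Omega$ has mass ratio bound $C_0$, the ball must intersect $K$.

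The key competitor is
\[
E_0 := \tilde\Omega_2 \cup (\tilde\Omega_1 \cap B_{r_0}(p_0)),
\]
which satisfies $\tilde\Omega_2 \subset E_0 \subset \tilde\Omega_1$ and $E_0 \Delta \Omega \subset K$. Decomposing $\partial^* E_0$ across $\partial B_{r_0}(p_0)$ and using the mass ratio bound $C_0 + C_1$ for $\tilde\Omega_1$ from Claim~\ref{claim:mass_ratio_upper_bound_1}, together with~\eqref{e:cc1} to control $\mathbf{\Phi}(\partial B_{r_0}(p_0))$ and $c\, \Vol(B_{r_0}(p_0))$, I would obtain
\[
\mathbf{\Phi}^c(E_0) \le \mathbf{\Phi}(\tilde\Omega_2; M\setminus \overline{B_{r_0}(p_0)}) - c\,\Vol(\tilde\Omega_2) + \bigl(C_0 + \tfrac{3C_1}{2}\bigr) r_0^n.
\]
Writing $\mathbf{\Phi}^c(\tilde\Omega_2)$ by splitting on $B_{r_0}(p_0)$ and using the contradiction hypothesis $\mathbf{\Phi}(\tilde\Omega_2; B_{r_0}(p_0)) > (C_0 + C_1 + 2C_1) r_0^n$, I conclude the strict inequality $\mathbf{\Phi}^c(E_0) < \mathbf{\Phi}^c(\tilde\Omega_2)$.

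Next, by the direct method choose $E_1$ minimizing $\mathbf{\Phi}^c$ in $\{E \in \mathcal{C}(M) : \tilde\Omega_2 \subset E \subset E_0\}$, so that $\mathbf{\Phi}^c(E_1) \le \mathbf{\Phi}^c(E_0) < \mathbf{\Phi}^c(\tilde\Omega_2)$. Define the new family
\[
\Omega'_t := \tilde\Omega_t \cup E_1, \qquad t \in [1, 2].
\]
Since $E_1 \subset \tilde\Omega_1$ and $\tilde\Omega_2 \subset E_1$, one has $\Omega'_1 = \tilde\Omega_1$ and $\Omega'_2 = E_1$; the family is decreasingly nested (as $\tilde\Omega_t$ is) and continuous in $L^1$, and $\Omega'_t \Delta \Omega \subset K$. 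The submodularity inequality
\[
\mathbf{\Phi}^c(\tilde\Omega_t \cup E_1) + \mathbf{\Phi}^c(\tilde\Omega_t \cap E_1) \le \mathbf{\Phi}^c(\tilde\Omega_t) + \mathbf{\Phi}^c(E_1),
\]
combined with the inclusions $\tilde\Omega_2 \subset \tilde\Omega_t \cap E_1 \subset E_0$ (which make $\tilde\Omega_t \cap E_1$ an admissible competitor for the minimization defining $E_1$), yields $\mathbf{\Phi}^c(\Omega'_t) \le \mathbf{\Phi}^c(\tilde\Omega_t) \le \mathbf{\Phi}^c(\Omega) + \delta$ for all $t \in [1, 2]$. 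After a standard reparameterization so that the volume change becomes linear in $t$, the family $\{\Omega'_t\}_{t \in [1,2]}$ lies in $\mathcal{P}^\delta_{\tilde\Omega_1, \supset}(K)$ and ends at $E_1$ with $\mathbf{\Phi}^c(E_1) < \mathbf{\Phi}^c(\tilde\Omega_2)$, contradicting~\eqref{eqn:tilde_Omega_2}.

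The main delicate point is verifying the chain of set inclusions that make the submodularity argument give $\mathbf{\Phi}^c(\Omega'_t) \le \mathbf{\Phi}^c(\tilde\Omega_t)$ along the whole path, and handling the degenerate case $\Vol(\tilde\Omega_1 \setminus E_1) = 0$ (in which case $\tilde\Omega_1 = E_1$ in measure and Claim~\ref{claim:mass_ratio_upper_bound_1} already furnishes the desired bound without needing a separate constant $C_2$). Everything else is a bookkeeping exercise parallel to Claim~\ref{claim:mass_ratio_upper_bound_1}, so the constant $C_2$ can be taken to depend only on $M, g, \lambda, \Lambda$.
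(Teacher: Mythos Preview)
Your proposal is correct and is exactly the dual of the paper's proof of Claim~\ref{claim:mass_ratio_upper_bound_1}, which is precisely what the paper means by ``by the same arguments as above''. Your choice of competitor $E_0 = \tilde\Omega_2 \cup (\tilde\Omega_1 \cap B_{r_0}(p_0))$ and the path $\Omega'_t = \tilde\Omega_t \cup E_1$ are the natural analogues of the paper's $E_0 = (\tilde\Omega_1 \setminus B_{r_0}(p_0)) \cup \Omega$ and $\tilde\Omega'_t = \tilde\Omega_t \cap E_1$; you have also made explicit the submodularity step that the paper leaves implicit in the line ``which satisfies $\mathbf{\Phi}^c(\tilde \Omega'_t) \leq \mathbf{\Phi}^c(\tilde \Omega_t)$''.
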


We now  show that $\tilde \Omega_2$ is also  \emph{locally outer-$\mathbf{\Phi}^c$-minimizing} and thus locally minimizing.

  \begin{claim}\label{claim:local_min}
    $\tilde \Omega_2$ is \emph{locally outer-$\mathbf{\Phi}^c$-minimizing} in $\operatorname{int}(K)$ in the following sense. For each $p \in \operatorname{int} K$, there exists $B_s(p) \subset \operatorname{int} K$ such that for any $E \in \mathcal{C}(M)$, if $\tilde \Omega_2 \subset E$ and $E\Delta\tilde \Omega_2 \subset B_s(p)$, then we have
    \[
      \mathbf{\Phi}^c(\tilde \Omega_2) \leq \mathbf{\Phi}^c(E)\,.
    \]
    In particular, $\tilde \Omega_2$ is locally $\mathbf{\Phi}^c$-minimizing in $\operatorname{int}(K)$.
  \end{claim}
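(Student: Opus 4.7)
The plan is to argue by contradiction: suppose there exist a point $p \in \operatorname{int}(K)$, a ball $B_s(p) \subset \operatorname{int}(K)$, and $E \in \mathcal{C}(M)$ with $\tilde \Omega_2 \subset E$, $E \Delta \tilde \Omega_2 \subset B_s(p)$, and $\mathbf{\Phi}^c(E) < \mathbf{\Phi}^c(\tilde \Omega_2)$. My strategy is to construct a decreasingly nested competitor path starting at $\tilde \Omega_1$ and ending at a set whose $\mathbf{\Phi}^c$-energy is strictly below $\mathbf{\Phi}^c(\tilde \Omega_2)$, contradicting the minimality \eqref{eqn:tilde_Omega_2} of $\tilde \Omega_2$.

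First I would reduce to the case $E \subset \tilde \Omega_1$. Applying the outer-minimality of $\tilde \Omega_1$ (Claim~\ref{claim1}) to the outer perturbation $E \cup \tilde \Omega_1$ (whose symmetric difference with $\tilde \Omega_1$ is contained in $B_s(p)$) and then combining with the submodularity of $\mathbf{\Phi}^c$ yields $\mathbf{\Phi}^c(E \cap \tilde \Omega_1) \leq \mathbf{\Phi}^c(E)$. Replacing $E$ by $E \cap \tilde \Omega_1$, I may therefore assume $E \subset \tilde \Omega_1$. Next, by a direct-method argument, I would pick $E^*$ minimizing $\mathbf{\Phi}^c$ over the class
\[
\{E' \in \mathcal{C}(M) : \tilde \Omega_2 \subset E' \subset \tilde \Omega_1,\ E' \Delta \tilde \Omega_2 \subset B_s(p)\},
\]
so that $\mathbf{\Phi}^c(E^*) < \mathbf{\Phi}^c(\tilde \Omega_2)$.

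The candidate competitor would be $\Omega^*_t := \tilde \Omega_t \cup E^*$ for $t \in [1, 2]$, where $\tilde \Omega_t$ is the optimizing decreasing path from \eqref{eqn:tilde_Omega_2}. Since $E^* \subset \tilde \Omega_1$ and $\tilde \Omega_2 \subset E^*$, one has $\Omega^*_1 = \tilde \Omega_1$ and $\Omega^*_2 = E^*$, and the family is decreasingly nested because $\tilde \Omega_t$ is. The main obstacle I anticipate is verifying the energy constraint $\mathbf{\Phi}^c(\Omega^*_t) \leq \mathbf{\Phi}^c(\Omega) + \delta$, since $\tilde \Omega_t$ may saturate this constraint. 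I would resolve it by combining the submodularity inequality
\[
\mathbf{\Phi}^c(\tilde \Omega_t \cup E^*) + \mathbf{\Phi}^c(\tilde \Omega_t \cap E^*) \leq \mathbf{\Phi}^c(\tilde \Omega_t) + \mathbf{\Phi}^c(E^*)
\]
with the crucial observation that $\tilde \Omega_t \cap E^*$ itself belongs to the admissible class over which $E^*$ was chosen to be minimizing: indeed $\tilde \Omega_2 \subset \tilde \Omega_t \cap E^* \subset \tilde \Omega_1$ and $(\tilde \Omega_t \cap E^*) \Delta \tilde \Omega_2 \subset B_s(p)$. Hence $\mathbf{\Phi}^c(E^*) \leq \mathbf{\Phi}^c(\tilde \Omega_t \cap E^*)$, and the inequality collapses to $\mathbf{\Phi}^c(\Omega^*_t) \leq \mathbf{\Phi}^c(\tilde \Omega_t) \leq \mathbf{\Phi}^c(\Omega) + \delta$, as desired.

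After a volume reparametrization, $\Omega^*_t$ lies in $\mathcal{P}^\delta_{\tilde \Omega_1, \supset}(K)$ with endpoint $E^*$, contradicting the defining inequality $\mathbf{\Phi}^c(\tilde \Omega_2) \leq \mathbf{\Phi}^c(E^*)$ from \eqref{eqn:tilde_Omega_2}. Combined with the inner-$\mathbf{\Phi}^c$-minimizing property established in the previous claim, this shows that $\tilde \Omega_2$ is locally $\mathbf{\Phi}^c$-minimizing in $\operatorname{int}(K)$.
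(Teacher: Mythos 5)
Your proposal is correct and follows essentially the same route as the paper: take a minimizing outer competitor, reduce to one contained in $\tilde\Omega_1$ using the outer-minimality from Claim~\ref{claim1}, push the decreasing path to $\Omega^*_t = \tilde\Omega_t \cup E^*$, and contradict \eqref{eqn:tilde_Omega_2}. The only difference is presentational: you spell out the submodularity inequality $\mathbf{\Phi}^c(A\cup B)+\mathbf{\Phi}^c(A\cap B)\le\mathbf{\Phi}^c(A)+\mathbf{\Phi}^c(B)$ and the observation that $\tilde\Omega_t\cap E^*$ is itself an admissible competitor for $E^*$, both of which the paper uses implicitly when asserting $\mathbf{\Phi}^c(\tilde\Omega_t\cup E'_0)\le\mathbf{\Phi}^c(\tilde\Omega_t)$.
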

    Note that $\tilde \Omega_1$ is locally outer-$\mathbf{\Phi}^c$-minimizing and $\tilde \Omega_2 \subset \tilde \Omega_1$. For each $p \in \operatorname{int} K$, there exists $B_s(p) \subset \operatorname{int} K$ where $\tilde \Omega_{1}$ is outer-$\mathbf{\Phi}^c$-minimizing.

Assume by  contradiction that $\tilde \Omega_2$ is not outer-$\mathbf{\Phi}^c$-minimizing in $\overline{B}_{s/2}(p)$, and  that there exists $E \in \mathcal{C}(M)$ with $\tilde \Omega_2 \subset E$ and $E\Delta \tilde \Omega_2 \subset \overline{B}_{s/2}(p)$ such that
    \[
      \mathbf{\Phi}^c(\tilde \Omega_2) > \mathbf{\Phi}^c(E)\,.
    \]
Again, we choose \(E_{0}\) which minimizes $\mathbf{\Phi}^c$ among all sets  $E$ satisfying the restrictions above. By the outer-$\mathbf{\Phi}^c$-minimizing property of $\tilde \Omega_1$, for
\[
\tilde \Omega_2 \subset E'_0 := E_0 \cap \tilde \Omega_1 \subset \tilde \Omega_1,
\]
we have
    \[
      \mathbf{\Phi}^c(E'_0) = \mathbf{\Phi}^c(E_0)\,,
    \]
  and thus $E'_0$ is also a $\mathbf{\Phi}^c$-minimizer among the sets $E$ above. Finally, we define a new continuous path $\{\tilde \Omega'_t\}_{t \in [1, 2]}$ by
    \[
      \tilde \Omega'_t = \tilde \Omega_t \cup E'_0\,,
    \]
    which satisfies $\mathbf{\Phi}^c(\tilde \Omega'_t) \leq \mathbf{\Phi}^c(\tilde \Omega_t)$ for all $t \in [1, 2]$. By a reparameterization of $\tilde \Omega'_t$, we obtain a new nested continuous path in $\mathcal{P}^\delta_{\tilde \Omega_1, \supset}(K)$. However, this path ends at $E'_0$ and contradicts~\eqref{eqn:tilde_Omega_2}.

Since \(\tilde \Omega_{2}\) is both  local outer- and local inner- minimizing, it is  indeed (locally) minimizing.
  Now, we concatenate $\{\tilde \Omega_t\}_{t \in [0, 1]}$ and $\{\tilde \Omega_t\}_{t \in [1, 2]}$  to  obtain a continuous path in $\mathcal{C}(M)$. By choosing sufficiently large $m > 0$, we see that the sequence $\tilde \Omega_0, \tilde \Omega_{2/m}, \tilde \Omega_{4/m}, \cdots, \tilde \Omega_2$ forms an admissible interpolation between \(\Omega\) and \(\tilde \Omega_{2}\). In particular, $\tilde \Omega_2 \in \mathcal{C}^\delta_\Omega(K)$ and we set  $\Omega^* := \tilde \Omega_2$. Conclusion (1) follows now immediately from Claim~\ref{claim:local_min}.

  Stability  in \(\operatorname{Int} (K)\) follows  from  the fact that   \(\Omega^{*}\) is smooth outside a small set (see Remark~\ref{rmk:smooth}) and is one-sided minimizing in \(\operatorname{Int} (K)\).

To prove conclusion (3) one argues  by contradiction as in~\cite[Lemma~5.7(iii)]{Zhou-zhu2020}. Indeed, if $\Omega^*$ is not in $\mathfrak{a}^{\Phi, c}(U; \varepsilon, \delta)$, there exists a sequence $\Omega^* = \Omega^*_0, \Omega^*_1, \ldots, \Omega^*_l$ in $\mathcal{C}(M)$ such that
  \begin{enumerate}[(i)]
  \item $\Omega^*_i \Delta \Omega^*\subset U$;
  \item $\Vol(\Omega^*_{i} \Delta\Omega^*_{i - 1}) \leq \delta$;
  \item $\mathbf{\Phi}^c(\Omega^*_i) \leq \mathbf{\Phi}^c(\Omega^*) + \delta$, for all $i = 1, 2, \ldots l$,
  \end{enumerate}
  but $\mathbf{\Phi}^c(\Omega^*_l) < \mathbf{\Phi}^c(\Omega^*) - \varepsilon$. By the construction of  $\Omega^*$, we know that
  \[
  \mathbf{\Phi}^c(\Omega^*) \leq \mathbf{\Phi}^c(\Omega).
  \]
  Therefore, the sequence $\tilde \Omega_0, \tilde \Omega_{2/m}, \tilde \Omega_{4/m}, \ldots, \tilde \Omega_2 = \Omega^*, \Omega^*_1, \ldots, \Omega^*_l$ satisfies (1)-(4) of Definition~\ref{def:am_varifolds}, but has $\mathbf{\Phi}^c(\Omega^*_l) < \mathbf{\Phi}^c(\Omega^*) - \varepsilon$, a contradiction to $\Omega \in \mathfrak{a}^{\Phi, c}(U; \varepsilon, \delta)$.

Finally, the mass ratio upper bound follows from Claim~\ref{claim:mass_ratio_upper_bound_1} and Claim~\ref{claim:mass_ratio_upper_bound_2} with $C= C_0 + C_1 + C_2$.

\end{proof}

Using the previous lemma, we can enhance~\cite[Proposition~5.8]{ZhouZhu2019} to include the desired mass ratio upper bound result.
\begin{proposition}[existence and properties of replacements]\label{prop:replacements}
  Given $\Lambda>0$, $c\in (0,\Lambda]$, $C_0 \in \mathbb{R}^+$, an open subset $U \subset M$ and a compact subset $K \subset U$, let $V \in \mathcal{V}_n(M)$ be $c$-almost minimizing for $\mathbf{\Phi}$ in the open set $U \subset M$ by means of $\Omega_i$, $\varepsilon_i$, $\delta_i$. Furthermore, suppose that every $\Omega_i$ has the mass ratio upper bound $C_0$. Then there exist $C= C(M, g, \lambda, \Lambda, C_0)$ and a \emph{$c$-replacement} $V^* \in \mathcal{V}_n(M)$ of $V$ in $K$ such that
  \begin{enumerate}[\normalfont(1)]
  \item $V^*$ is $c$-almost minimizing for $\mathbf{\Phi}$ in $U$ by means of $\Omega^*_i$, $\varepsilon_i$, $\delta_i$, for some $\Omega^*_i \in \mathcal{C}(M)$ that is $c$-stable in \(\mathrm{Int}(K)\), locally minimizes $\mathbf{\Phi}^c$ in $\operatorname{int}(K)$ and has mass ratio upper bound $C$. In particular, $V^*$ also has the same mass ratio upper bound $C$ and it satisfies the regularity property of Definition~\ref{def:replac}.
    \item If $V$ has $c$-bounded first variation in $M$, then so does $V^*$.
    \item $V, V^*\in \mathcal{IV}(M)$ and there exists $C'(M,g,\Phi,\Lambda)>0$ such that $\theta_*(V,x)\geq C'$ for any $x\in \operatorname{spt}(\|V\|)$ and $\theta_*(V^*,x)\geq C'$ for any $x\in \operatorname{spt}(\|V^*\|)$.
  \end{enumerate}
\end{proposition}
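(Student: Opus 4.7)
The strategy is to obtain $V^*$ as a subsequential varifold limit $V^* = \lim_i |\partial^* \Omega^*_i|$, where each $\Omega^*_i$ is produced from $\Omega_i$ by Lemma~\ref{lem:area_ratio_am}. Concretely, for each $i$ I apply that lemma with the given compact set $K \subset U$ to the Caccioppoli set $\Omega_i \in \mathfrak{a}^{\Phi, c}(U; \varepsilon_i, \delta_i)$, obtaining $\Omega^*_i \in \mathfrak{a}^{\Phi, c}(U; \varepsilon_i, \delta_i)$ that agrees with $\Omega_i$ outside $K$, is locally $\mathbf{\Phi}^c$-minimizing and $c$-stable in $\mathrm{int}(K)$, satisfies $\mathbf{\Phi}^c(\Omega^*_i) \leq \mathbf{\Phi}^c(\Omega_i)$, and carries the mass ratio upper bound $C = C(M, g, \lambda, \Lambda, C_0)$. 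The uniformly bounded anisotropic masses together with varifold and $L^1$-compactness allow me to extract a subsequential limit $V^*$. Property (a) of Definition~\ref{def:replac} is immediate from $\Omega^*_i \Delta \Omega_i \subset K$ and lower semicontinuity. For property (b), the inequality $\|V^*\|(M) - \|V\|(M) \leq c\Vol(K)$ follows by passing to the limit in the rewrite $\mathbf{\Phi}(\Omega^*_i) - \mathbf{\Phi}(\Omega_i) \leq c(\Vol(\Omega^*_i) - \Vol(\Omega_i)) \leq c\Vol(K)$ of $\mathbf{\Phi}^c(\Omega^*_i) \leq \mathbf{\Phi}^c(\Omega_i)$; the reverse inequality uses the almost-minimizing condition $\Omega_i \in \mathfrak{a}^{\Phi,c}(U; \varepsilon_i, \delta_i)$ applied to the valid interpolation from $\Omega_i$ to $\Omega^*_i$ constructed inside the proof of Lemma~\ref{lem:area_ratio_am}, yielding $\mathbf{\Phi}^c(\Omega^*_i) \geq \mathbf{\Phi}^c(\Omega_i) - \varepsilon_i$ and thus $\mathbf{\Phi}(\Omega_i) - \mathbf{\Phi}(\Omega^*_i) \leq c\Vol(K) + \varepsilon_i \to c\Vol(K)$.

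For property (c) and the regularity content of conclusion (1), I invoke the regularity theory of $\mathbf{\Phi}^c$-minimizers (see Remark~\ref{rmk:smooth} and~\cite{Bombieri, DPM1}): each $\partial^* \Omega^*_i$ is a smooth almost embedded $c$-CMC hypersurface in $\mathrm{int}(K)$, $c$-stable there, with $\mathcal{H}^{n-2}$-negligible singular set. Since $c > 0$ is fixed, the masses and mass ratios of $|\partial^* \Omega^*_i|$ are uniformly bounded, so Theorem~\ref{T:compact}(i) yields locally smooth convergence to an almost embedded $c$-stable hypersurface $\Sigma$ in $\mathrm{int}(K)$ with $\mathcal{H}^{n-2}(\Sing(\Sigma)) = 0$ and $V^* \res \mathrm{Int}(K) = |\Sigma|$. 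The remaining parts of conclusion (1) are then straightforward: $V^*$ is $c$-almost minimizing by means of $\Omega^*_i, \varepsilon_i, \delta_i$ by Lemma~\ref{lem:area_ratio_am}(3) combined with $\mathbf{F}(|\partial^* \Omega^*_i|, V^*) \to 0$, and the mass ratio upper bound on $V^*$ follows from the uniform bound on each $\Omega^*_i$ by lower semicontinuity of mass on balls. Conclusion (3) is established exactly as in~\cite[Lemma 4.15]{DePhilippis_DeRosa2024}: rectifiability of $V$ and $V^*$ follows from~\cite[Theorem 1.2]{DPDRGrect} given their almost-minimizing property and mass ratio bounds, and the uniform density lower bound is a standard consequence of a bounded anisotropic first variation combined with the mass ratio estimate.

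The main obstacle I anticipate is conclusion (2): showing that a $c$-bounded first variation of $V$ is inherited by $V^*$. Outside $K$ this is automatic since $V^* = V$, while for test fields $X$ compactly supported in $\mathrm{Int}(K)$ the smooth hypersurface $\Sigma$ has $\Phi$-anisotropic mean curvature of constant modulus $c$, giving $|\delta_{\mathbf{\Phi}} V^*(X)| \leq c \int |X|\, d\|V^*\|$. The delicate point is handling vector fields $X$ whose support crosses $\partial K$, where a priori an interfacial contribution could appear. My plan is to localize via a smooth cutoff $\eta_\rho$ vanishing on a $\rho$-tubular neighborhood of $\partial K$ and equal to $1$ outside a $2\rho$-neighborhood: the bound on $\eta_\rho X$ follows from the two cases above applied separately in $\mathrm{Int}(K)$ and in $M \setminus \overline{K}$, while the contribution of $(1 - \eta_\rho) X$ is controlled via $\|D\eta_\rho\|_\infty \lesssim \rho^{-1}$ combined with the uniform mass ratio upper bound on $V^*$, which gives $\|V^*\|(\{\dist(\cdot, \partial K) < 2\rho\}) \lesssim \rho$. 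This is precisely the cutoff trick used to prove conclusion (1) of Theorem~\ref{thm: existence of almost minimizing varifold}; letting $\rho \to 0$ kills the boundary term and yields the global $c$-bounded first variation of $V^*$.
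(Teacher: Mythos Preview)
Your overall construction---applying Lemma~\ref{lem:area_ratio_am} to each $\Omega_i$, passing to a subsequential varifold limit, and invoking Theorem~\ref{T:compact} for the regularity in $\mathrm{Int}(K)$---matches the paper's approach exactly, as does your citation of \cite[Lemma~4.15]{DePhilippis_DeRosa2024} for conclusion (3). The verification of (a) and (b) in Definition~\ref{def:replac} is also fine.

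There is, however, a genuine gap in your argument for conclusion (2). Your claimed estimate $\|V^*\|(\{\dist(\cdot,\partial K)<2\rho\})\lesssim \rho$ is false in general: the mass ratio bound gives $\|V^*\|(B_\rho(p))\le C\rho^n$, but $\partial K$ is an $n$-dimensional hypersurface, so covering its $2\rho$-neighborhood requires $\sim\rho^{-n}$ balls of radius $\rho$, yielding only $\|V^*\|(\{\dist(\cdot,\partial K)<2\rho\})=O(1)$. Multiplying by $\|D\eta_\rho\|_\infty\sim\rho^{-1}$ gives a term that blows up rather than vanishes. You have misidentified the cutoff trick from Theorem~\ref{thm: existence of almost minimizing varifold}: there the cutoff is around a single \emph{point}, where the mass ratio bound gives $\|V\|(B_{2/j}(p))\lesssim j^{-n}$ and the product with $\|\nabla\eta_j\|_\infty\lesssim j$ is $\lesssim j^{1-n}\to 0$ since $n\ge 2$. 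Codimension is what makes that argument work, and it is absent around $\partial K$.

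The fix, which is implicit in the paper's reference to \cite[Proposition~5.8]{ZhouZhu2019}, is to avoid $\partial K$ altogether. Since $K\subset U$ is compact, take a partition of unity subordinate to $\{U,\,M\setminus K\}$ and decompose $X=X_1+X_2$ with $\spt(X_1)\subset U$ and $\spt(X_2)\cap K=\emptyset$. For $X_2$ one uses $V^*\res(M\setminus K)=V\res(M\setminus K)$ and the hypothesis on $V$. For $X_1$ one argues by contradiction directly from the $c$-almost minimizing property of $V^*$ in $U$: if $\delta_{\mathbf\Phi}V^*(X_1)+c\int|X_1|\,d\|V^*\|<0$, the flow of $X_1$ produces, for $i$ large, an admissible deformation of $\Omega^*_i$ (supported in $U$) that decreases $\mathbf\Phi^c$ by a fixed amount, contradicting $\Omega^*_i\in\mathfrak a^{\Phi,c}(U;\varepsilon_i,\delta_i)$. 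No estimate on $\|V^*\|$ near $\partial K$ is needed.
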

\begin{proof}\
The proof follows verbatim~\cite[Proposition~5.8]{ZhouZhu2019}, with Lemma 5.7  therein replaced by  Lemma~\ref{lem:area_ratio_am}.  Note that the regularity of  the replacement in \(\mathrm{Int}(K)\)  follows from the stability  of  \(\partial \Omega^*_i\) and Theorem~\ref{T:compact}. The integrality of $V$ and $V^*$ along with their uniform density lower bounds can be proved as in~\cite[Lemma 4.15]{DePhilippis_DeRosa2024}.
\end{proof}

\section{Regularity of min-max minimal hypersurfaces}\label{sec:Regularity of min-max minimal hypersurfaces}
In this section, we prove the regularity of  the min-max varifolds constructed in  Section~\ref{sec: Alomost minimizing varifolds and unifrom mass ratio}. We are going to use the fact that, if \(c>0\),  the regular part of a replacement will have multiplicity one, except for a set of finite \(\mathcal{H}^{n-1}\) measure where the multiplicity is two.

\begin{theorem}[regularity]
  \label{thm:regularity}
  Let \(\Lambda > 0\), let \(c\in (0,\Lambda]\) and let $V$ be the $n$-varifold constructed in Theorem~\ref{thm: existence of almost minimizing varifold}. Then $V=|\Sigma|$, where $\Sigma$ is a smooth almost embedded hypersurface. Moreover $\Theta(V,x)=1$ for every $x\in \mathcal R(\Sigma)$ and $\Theta(V,x)=2$ for every $x\in \mathcal S(\Sigma)$. Furthermore, $\Sigma$ has a mass ratio upper bound $C=C(M,g,\lambda, \Lambda)$. Also, for every $\bar L$-admissible collection of annuli, $\Sigma$ is $c$-stable in at least one annulus, where \(\bar L\) is as in Theorem~\ref{thm: existence of almost minimizing varifold}.
\end{theorem}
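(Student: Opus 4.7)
The plan is to follow the classical Pitts regularity scheme via successive replacements, adapted to the anisotropic $c$-CMC setting as in~\cite{DePhilippis_DeRosa2024}. Given a point $p\in\spt\|V\|$ and any sufficiently small radius, the first step is to use the almost-minimizing property of $V$ on every $\bar L$-admissible collection of annuli centered at $p$, together with Proposition~\ref{prop:replacements}, to produce two successive $c$-replacements $V^*$ and $V^{**}$ in nested annuli. By construction, each replacement is an integral varifold whose support inside the relevant open annulus is a smooth almost embedded $c$-stable hypersurface with $\mathcal{H}^{n-2}$-negligible singular set, has a mass ratio upper bound controlled by $C(M,g,\lambda,\Lambda)$, has first variation bounded by $c$, and is itself $c$-almost minimizing on the same admissible collection with the same constants $\varepsilon_i,\delta_i$.

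The heart of the argument is then to show that $V^{**}$ extends smoothly across the sphere separating the two annuli in which it was constructed. In the isotropic case this is achieved via the monotonicity formula, which forces tangent varifolds at boundary points to be cones; combined with the global $c$-bounded first variation, this implies that tangent planes on the two sides match, and smoothness follows from Theorem~\ref{T:compact}. Since no such monotonicity formula is available for general elliptic integrands, I would instead employ the barrier construction in the spirit of~\cite{DPM1,Hardt77} already used in~\cite{DePhilippis_DeRosa2024}: one foliates a neighborhood of the boundary sphere by small $\mathbf{\Phi}^c$-minimizing leaves and applies the strict maximum principle, valid here because $c>0$ guarantees multiplicity one off an $(n-1)$-dimensional set, to conclude that the two smooth pieces of $V^{**}$ glue along the sphere into a single almost embedded $c$-stable hypersurface. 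The uniform mass ratio upper bound inherited from Proposition~\ref{prop:replacements} is essential both to carry out the local blow-up analysis and to extract tangent varifolds in the limit.

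Once $V^{**}$ is known to be a smooth almost embedded $c$-stable hypersurface in a full neighborhood of $p$, the final step is to vary the inner and outer radii of the admissible collections and to use that $V$, $V^*$, $V^{**}$ have comparable masses, with the differences controlled by $c\Vol$ of the replacement regions, so that the maximum principle yields $V=V^{**}$ near $p$. This part of the argument follows the scheme of~\cite[Section~6]{ZhouZhu2019} and of~\cite{DePhilippis_DeRosa2024} and is dimension independent. The multiplicity statement is then a consequence of the strict maximum principle for $c$-CMC almost embedded hypersurfaces; the mass ratio upper bound for $\Sigma$ is inherited from that of $V$; and the $c$-stability of $\Sigma$ in at least one annulus of each $\bar L$-admissible collection is built into the replacement construction since $V$ is almost minimizing there.

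The principal obstacle is the boundary-matching step: the absence of an anisotropic monotonicity formula forces one to rely on a careful barrier construction, and this is precisely where the assumptions $c>0$, the multiplicity-one property, and the uniform mass ratio upper bound must be exploited simultaneously.
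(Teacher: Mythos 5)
Your overall plan — successive $c$-replacements via Proposition~\ref{prop:replacements} combined with a barrier argument in the spirit of~\cite{DPM1,Hardt77}, with $c>0$ and the uniform mass ratio bound as the enabling ingredients — is the right approach, and you correctly identify the boundary-matching across the separating sphere as the crux. But that step, as you describe it, does not match what the paper does and would not work as stated.

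You propose to ``foliate a neighborhood of the boundary sphere by small $\mathbf{\Phi}^c$-minimizing leaves'' and apply the maximum principle. The mechanism in the paper is different: near a regular boundary point $y$, it constructs an explicit one-parameter family of convex, lens-shaped domains $\tilde C_\alpha$ bounded by two spherical caps meeting at a controlled angle $3\varepsilon$, chosen so that for small enough radius the anisotropic mean curvature of the smooth part of $\partial\tilde C_\alpha$ strictly exceeds $c$. These are geometric barriers, not $\Phi^c$-minimal leaves, and it is the maximum principle of~\cite{DDH} applied to them that confines the varifold. More importantly, you only use two replacements $V^*,V^{**}$, whereas the paper crucially introduces a \emph{third} replacement $V'''$ of $V''$ inside a lens-shaped annular region $C$; this is where Proposition~\ref{prop:replacements} is invoked to produce a locally $\Phi^c$-minimizing comparison whose tangent varifolds at a boundary point $z\in\partial C$ can be analyzed. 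That analysis is the technical heart and the true substitute for the absent monotonicity formula: Lemma~\ref{lemma1} confines the tangent varifold to a minimal wedge by comparison with a Lipschitz graph solving the frozen anisotropic PDE, and Lemma~\ref{lem:unosolo} then uses linearity of the first variation and strict convexity of $\Phi$ to upgrade this to the statement that the tangent varifold is a single multiplicity-one half-plane. Without this wedge-confinement step there is no way to conclude that the tangent planes of $V'$ and $V''$ match. You also omit the separate blow-up analysis at touching-set points $y\in\mathcal{S}(\Sigma')$, where the density is $2$ and the comparison argument is run against $2|T_y\Sigma'|$, and the removal of the possible singular point at the center of the initial annulus, which in dimension $n+1=3$ requires the stability estimate~\eqref{eq:stability}, a logarithmic cut-off capacity argument, and the removable-singularity theorem of~\cite{Whitesingrimovibili}.
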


\begin{proof}
  Let $V$ be the $n$-varifold constructed in Theorem~\ref{thm: existence of almost minimizing varifold}. We will prove the regularity of $V$ near an arbitrary point $x\in \mbox{spt}\|V\|$.   Fix $x\in \mbox{spt}\|V\|$, and consider a radius $2\rho\leq r_\text{am}(x)$ that allows the construction of replacements as stated in Proposition~\ref{prop:replacements}.

  Consider a replacement $V'$ for $V$ in $\an(x, \rho, 2\rho)$, and let $\Sigma'$ be the $c$-stable hypersurface given by $V'$ in $\an(x, \rho,2\rho)$. Choose $t\in (\rho, 2\rho)$ such that both $\Sigma'$ and $\mathcal{S}(\Sigma')$ intersect $\partial B_t (x)$ transversally and such that
  \[
  \mathcal H^{n-3}(\operatorname{Sing}(\Sigma')\cap \partial B_t (x))=0
  \]
These properties are true for a.e. $t\in (\rho, 2\rho)$ (the latter one follows from the Eilenberg's inequality and  $\mathcal H^{n-2}(\operatorname{Sing}(\Sigma'))=0$).

  For $s<\rho$, we consider the replacement $V''$ of $V'$ in $\an(x, s, t)$, which in this annulus coincides with a smooth $c$-stable surface $\Sigma''$. We remark that $V',V''\in \mathcal{IV}(M)$ by the properties of replacements.

\medskip

\noindent\textbf{Step 1:}  We claim that for every $y\in \operatorname{Reg}(\Sigma')\cap \partial B_t (x)$, there exists a sufficiently small radius $r$, so that
      \begin{equation}\label{claim}
        \Sigma''\cap B_{t}(x)\cap B_{r}(y)=\Sigma'\cap B_{t}(x)\cap B_{r}(y).
      \end{equation}
  Given the local nature of the claim, we will assume without loss of generality that the ambient space is $\mathbb R^{n+1}$.

\medskip
\noindent
\emph{Case 1: Assume that $y\in \mathcal{R}(\Sigma')$.}  Fix $0<\varepsilon\ll 1$ to be chosen later, there exists a sufficiently small radius $r>0$, so that for every $z\in \mathcal{R}(\Sigma')\cap B_r (y)$,
  \[
  \dist (T_z\Sigma',T_y\Sigma')<\varepsilon.
  \]

  We  fix a point $z\in \mathcal{R}(\Sigma')\cap B_r (y) \setminus \an(x, s, t)$, and we consider a convex domain $\tilde C$ bounded by the union of two spherical caps with the same boundary, which is an $(n-1)$-dimensional sphere $S$ centered at $y$, with $z \in S$, with $T_zS = T_z(\Sigma' \cap \partial \tilde C)$ and such that the two caps intersects at an angle \(3\varepsilon\). In particular 
  \begin{equation}\label{utile per foliazione}
      \Sigma'\cap B_r (y)\setminus \an(x, s, t)\subset  \tilde C, 
  \end{equation}
  and the tangent cone $T_z \tilde C$ is a wedge with opening angle $3\varepsilon$.  We denote by
  \[
    C:=\Big((\tilde C-y)\setminus \frac{1}{2}(\tilde C-y)\Big)+y
    \]
  the annulus obtained by removing from $\tilde C$ a translation of $\frac 12 \tilde C$, that is concentric with $\tilde C$. Notice that $\tilde C$ and $C$ have the same tangent cones at \(z\),  $T_z \tilde C=T_z C$; see Figure~\ref{fig:case1}.

\begin{figure}[!ht]
    \centering
    \includegraphics[width=0.75\linewidth]{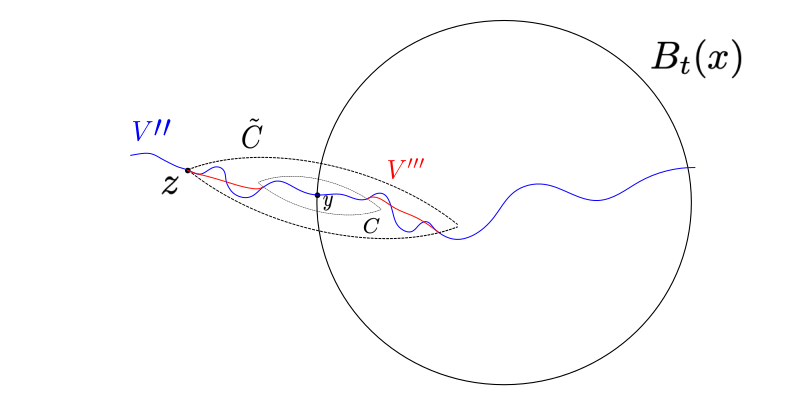}
    \caption{The construction in the proof of Case 1}
    \label{fig:case1}
\end{figure}
Let  $V'''$ be a replacement of  $V''$ inside $C$. Note that we can ensure its existence by  Proposition~\ref{prop:replacements}, since  $V''$ is $c$-almost minimizing for $\Phi$ in small annuli.  By choosing $r<r_\text{am}(y)$, there exist $0 < r_1 < r_2 \leq r$ such that $C\subset \an(y,r_1,r_2)$. Since $V''$ is $c$-almost minimizing in  $\an(y,r_1,r_2)$, $V''$ is $c$-almost minimizing in $C$ and we can apply Proposition~\ref{prop:replacements} there.

  We claim that $V'''$ is regular at $z$. Since $V'''$ coincides with $V''$ outside $C$, we know that the family $\{(\eta_{z,r'})_\# V'''\}_{r'<r}$ has uniformly bounded mass and consequently $TV(z, V''')\neq \emptyset$.

  Up to rotation,  we can assume that
  \[
  T_zC=\{|p_{n+1}|\leq \tan(3\varepsilon) p_1\}\qquad \textrm{and}\qquad T_zS=\mbox{span}(e_2,\dots,e_{n})=:\ell,
  \]
  where we have used the notation  $p=(p_1,\dots,p_{n+1})$. Since $z\in \mathcal{R}(\Sigma')$, we deduce that  every $W \in TV(z, V''')$ coincides with an half plane on one side of \(\ell\):
  \begin{equation}\label{plane}
    W\res \{p_1 \leq 0\}=|T_z\Sigma'|\res \{p_1 \leq 0\}.
  \end{equation}

  To proceed, we will need the following two lemmas, whose proofs are postponed to the end of the section.

  \begin{lemma}\label{lemma1}
    Denoting $H:=\{p_1>0\}$, there exist \(\nu_1, \nu_2\in \mathbb S^{n}\)  (possibly $\nu_1=\nu_2$), and $W\in TV(z,V''')$ such that
    \begin{equation}\label{c22}
      \operatorname{spt}(W\res \{p_1 \geq 0\}) \subset \overline{H}\cap \{\langle \nu_1 , p\rangle \geq 0 \}\cap \{\langle \nu_2 , p\rangle \leq 0 \}\subset T_zC,
    \end{equation}
    and one of the following properties holds:
    \begin{equation}\label{c1}
      |\{\langle \nu_1 , p\rangle = 0 \}|\res \overline{H} + |\{\langle \nu_2 , p\rangle = 0 \}| \res \overline{H}  \leq W\res \overline{H},\qquad \mbox{if $\nu_1\neq \nu_2$}
    \end{equation}
    \begin{equation}\label{cc1}
      |\{\langle \nu_1 , p\rangle = 0 \}|\res \overline{H}  = W\res \overline{H},\qquad \mbox{if $\nu_1= \nu_2$.}
    \end{equation}
  \end{lemma}

  \begin{lemma}\label{lem:unosolo}
    There exists  \(\varepsilon=\varepsilon(\lambda)\)  sufficiently small such that the  varifold $W\in TV(z,V''')$ constructed in Lemma~\ref{lemma1} satisfies $W=|T_z\Sigma'|$.
  \end{lemma}

By combining the two steps above, we deduce that at least one blowup of $V'''$ at the point $z$ is a hyperplane with multiplicity $1$. Now the same graphicality argument as in~\cite[Proof of Proposition 4.14, Step 1, Case 2]{DePhilippis_DeRosa2024} allows to conclude that $V'$ and $V'''$ glue smoothly at $z$. In particular, \(z\) is a multiplicity one point for \(V'''\), and this is true also in a neighborhood.  By  unique continuation, there exists a connected component of $V'''\res C$ which coincides with $\Sigma'\cap C$.

Note that the $\Phi$-anisotropic mean curvature $h_\Phi^{\partial \tilde C \setminus S}$ of the smooth part $\partial \tilde C \setminus S$ is bounded from below by
 \[
 h_\Phi^{\partial \tilde C \setminus S}\geq \frac{C(\varepsilon,n,\Phi)}r
 \]
  where $C(\varepsilon,n,\Phi)>0$ depends just on the prescribed $\varepsilon,n,\Phi$. By choosing  $r$ small enough, we can ensure  that $c<h_\Phi^{\partial \tilde C \setminus S}$. Hence, we can apply the maximum principle~\cite{DDH} to deduce that
  \begin{equation}\label{applicazioneprincipiomassimo}
    \Sigma'\cap \partial \tilde  C\subset \spt(V''') \cap \partial \tilde C \subset \mbox{spt}(V'''\res (\mathbb R^{n+1}\setminus \tilde C)),
  \end{equation}
  otherwise, we would have that the smooth $c$-stable hypersurface $\Sigma''':=V'''\res \tilde C$ touches $\partial \tilde C$ from the inside, which contradicts the maximum principle.  Since
  \[
  \spt(V'''\res (\mathbb R^{n+1}\setminus \tilde C))\subset \spt(V'''\res (\mathbb R^{n+1}\setminus C))= \spt(V''\res (\mathbb R^{n+1}\setminus C)),
  \]
  we deduce from~\eqref{applicazioneprincipiomassimo} that
  \[
    \Sigma'\cap \partial \tilde  C \subset \mbox{spt}(V'')\,.
  \]
  
  By the arbitrariness of the point $z\in \mathcal{R}(\Sigma')\cap B_r (y) \setminus \an(x, s, t)$, we can repeat the same argument above with a continuous $1$-parameter family $\{z_\alpha\}_{\alpha \in [0,1]}\subset \mathcal{R}(\Sigma')\cap B_r (y) \setminus \an(x, s, t)$, with $z_1=z$, $z_0=y$, and $d(z_\alpha,y)$ increasing in $\alpha$, so that by continuity and the fact that $\tilde C_0=\{y\}$, the associated $1$-parameter family $\tilde C_\alpha$ satisfies:\begin{equation}\label{foliation}
    \tilde C\subset\bigcup_{\alpha \in [0,1]}\partial \tilde  C_\alpha.
  \end{equation}
  Arguing as above for every $\alpha \in [0,1]$, we deduce that
  \[
    \Sigma'\cap \tilde C\overset{\eqref{foliation}}{\subset} \bigcup_{\alpha \in [0,1]}(\Sigma'\cap \partial \tilde  C_\alpha) \subset \mbox{spt}(V'').
    \]
  Since $V''\in \mathcal{IV}(M)$ by Proposition~\ref{prop:replacements}, we deduce that $|\Sigma'\cap \tilde C|\leq V''$. Hence, for every $Z\in TV(y,V'')$ we have $|T_y \Sigma'|\leq Z$. Since stationarity is preserved in the blow up:  $\delta_\mathbf{\Phi} Z=\delta_\mathbf{\Phi} |T_y\Sigma'|=0$ where we are using the same notation \(\Phi\) for the frozen functional \(\Phi(v)=\Phi(x,v)\). By the linearity of the $\Phi$-anisotropic first variation, we deduce that
  \[
  \delta_\mathbf{\Phi}(Z-|T_y\Sigma'|)=0,
  \]
   where the difference is always to be intended between measures in the varifold sense.
   Moreover, since $V' \res (M \setminus \an(x, s, t)) = V'' \res (M \setminus \an(x, s, t))$, we have that (up to a rotation)
   \[
     \spt(Z-|T_y\Sigma'|)\subset \{p_1\geq 0\}, \mbox{ and } T_y\Sigma'\cap \{p_1= 0\}= \{p_1= 0, p_{n+1}=0\}.
     \]
  Again arguing as in the proof of~\cite[Lemma 6.2]{DePhilippis_DeRosa2024}, we deduce the existence of $L=L(n,\lambda,c)$ depending only on the dimension $n$, the integrand $\Phi$ and the anisotropic constant mean curvature $c$, such that
  \[
  \sup_{\spt(Z-T_y\Sigma')}\,\frac{ |\langle x, e_{n+1}\rangle|}{\langle x , e_1\rangle}\le L.
  \]
  By the maximum principle~\cite{DDH}, this is possible only if $Z=|T_y\Sigma'|$. Again the same graphicality argument as in~\cite[Proof of Proposition 4.14, Step 1, Case 2]{DePhilippis_DeRosa2024} allows to conclude the desired~\eqref{claim}.

\medskip
\noindent
  {\emph{Case 2: Assume that $y\in \mathcal{S}(\Sigma')$.} } As observed in Section~\ref{sec:cmc}, by the proof of~\cite[Lemma 5.1]{DePhilippis_DeRosa2024}, there exists a sufficiently small radius $r$, so that $ \mathcal S(\Sigma')\cap B_r (y)\cap \partial B_t (x)$ is an $(n-2)$-dimensional $C^1$  graph in $\Sigma'\cap \partial B_t (x)$. Since $y\in \mathcal S(\Sigma')$, and we have the mass ratio upper bound of Proposition~\ref{prop:replacements}, the family $\{(\eta_{y,r'})_\# V''\}_{r'<r}$ has uniformly bounded mass (from above and below) and consequently $TV(y, V'')\neq \emptyset$.
  Up to rotation, denoting $p=(p_1,\dots,p_{n+1})$, we can assume that $T_y(\partial B_t(x))=e_1^\perp$, that $\eta_{y,r'}(B_t(x))\subset \{p_1 \geq 0\}$ and that $T_y(\mathcal{S}(\Sigma') \cap \partial B_t(x))=\text{span}( e_2, \dots, e_{n-1})=\ell$.

  Since $y\in \mathcal S(\Sigma')$, we deduce that for every $Z \in TV(y, V'')$,
  \begin{equation}\label{plane2}
    Z\res \{p_1 \leq 0\}=2|T_y\Sigma'|\res \{p_1 \leq 0\}, \qquad \mbox{where $T_y\Sigma' \neq e_1^\perp$}.
  \end{equation}
  Fix $Z \in TV(y, V'')$ and denote by $\{r_j\}$ the sequence of radii such that $W_j:=(\eta_{y,r_j})_\# V''$ converges to $Z$.

 Moreover, let $\gamma = T_y(\mathcal{S}(\Sigma'))$. In the following, we denote with $\mathbb{B}_s$ the Euclidean ball centered at $0$ of radius $s$ and with $U_s(\gamma)$ the Euclidean $s$-tubular neighborhood of $\gamma$. For every $\alpha \in (0,1)$, by Case~1 above, we know that there exists $N(\alpha)\in \mathbb N$ such that for every $j\geq N$, it holds 
 \[
    W_j\res (\mathbb{B}_{1/\alpha} \cap U_{1/\alpha}(\gamma)\setminus U_\alpha(\gamma)) \geq |\eta_{y,r_j}(\Sigma')| \res (\mathbb{B}_{1/\alpha} \cap U_{1/\alpha}(\gamma)\setminus U_\alpha(\gamma))
\] in the sense of varifolds, and $W_j$ is a sequence of $(cr_j)$-stable almost embedded smooth hypersurfaces in $(\mathbb{B}_{1/\alpha} \cap U_{1/\alpha}(\gamma)\setminus U_\alpha(\gamma))$, outside of $\mathcal H^{n-2}$-measure zero singular sets.

  Since $r_j \downarrow 0$, by Theorem~\ref{compactness}, we deduce that $W_j\res (\mathbb{B}_{1/\alpha} \cap U_{1/\alpha}(\gamma)\setminus U_\alpha(\gamma))$ converge smoothly (with integer multiplicity) to $\Sigma^\alpha$, where $\Sigma^\alpha$ is a stable embedded hypersurface with $\mathcal H^{n-2}(\operatorname{Sing}(\Sigma^\alpha))=0$. Since $Z\res (\mathbb{B}_{1/\alpha} \cap U_{1/\alpha}(\gamma)\setminus U_\alpha(\gamma))=\Sigma^\alpha$, by~\eqref{plane2} we obtain the following inequality in the sense of varifolds
  \[
    Z\res (\mathbb{B}_{1/\alpha} \cap U_{1/\alpha}(\gamma)\setminus U_\alpha(\gamma)) \geq 2|T_y\Sigma'| \res (\mathbb{B}_{1/\alpha} \cap U_{1/\alpha}(\gamma)\setminus U_\alpha(\gamma)), \qquad \forall \alpha\in (0,1),
    \]
  and consequently,
  \[
    Z\res \gamma^c \geq 2|T_y\Sigma'| \res  \gamma^c.
    \]
  By the uniform upper density estimates obtained in Proposition~\ref{prop:replacements}, we deduce that
  \begin{equation}\label{tangent plane}
    Z\geq 2|T_y\Sigma'|.
  \end{equation}
  Since both $Z$ and $2|T_y\Sigma'|$ are stationary, we deduce that $Z':=Z-2|T_y\Sigma'| \subset \{p_1 \geq 0\}$ is stationary, where again the difference is to be intended in the space of varifolds.

  With the same argument used in the proof of~\cite[Proposition 5.3]{DePhilippis_DeRosa2024} to obtain~\cite[Equations (5.5)-(5.6)]{DePhilippis_DeRosa2024}, we can prove that $Z'$ is contained in a wedge $L:=\{|p_{n+1}|\leq a p_1, \, p_1 \geq 0\}$ for some $a>0$. We claim that $Z'=0$, and consequently that $Z=2|T_{y} \Sigma'|$.
  Indeed if by contradiction $Z'\neq 0$, there exists $\bar h:=\min\{h\geq 0: \{p_1 = h\}\cap \text{spt}(Z')\neq 0\}$. By the maximum principle, we deduce that $\{p_1 = \bar h\}\subset \text{spt}(Z')$. But this cannot be true as $\{p_1 = \bar h\}$ is not entirely contained in the wedge $L$. This is the desired contradiction. In conclusion, we have proved that
  \[
    TV (y, V'')=\{2|T_{y} \Sigma'|\}.
    \]
The same graphicality argument as in~\cite[Proof of Proposition 4.14, Step 1, Case 2]{DePhilippis_DeRosa2024} allows to obtain the desired~\eqref{claim}.

\medskip
  \noindent \textbf{Step 2:}  We claim that $V\res (B_{2\rho}(x)\setminus \{x\})=|\Sigma| \res (B_{2\rho}(x)\setminus \{x\})$, where $\Sigma$ is a smooth almost embedded $c$-stable hypersurface in $B_{2\rho}(x)\setminus \{x\}$, except for a set of \(\mathcal H^{n-2}\) measure \(0\).

  This can be proven exactly as in the proof of~\cite[Proposition 4.14, Step 2]{DePhilippis_DeRosa2024}, which is independent on the dimension $n$.

\medskip

 \noindent \textbf{Step 3}: We claim that $V\res B_{2\rho}(x)=|\Sigma| \res B_{2\rho}(x)$, where $\Sigma$ is a smooth almost embedded $c$-stable hypersurface in $B_{2\rho}(x)$, except for a set of \(\mathcal H^{n-2}\) measure \(0\).

Note that by the compactness of \(M\) and the previous steps, the conclusion is true except for finitely many points (the centers of the annuli).  If $n+1 \geq 4$, there is nothing to prove since we have possibly only  added a finite set of points  to the singular set, which still has vanishing \(\mathcal H^{n-2}\) measure.

  In the case $n+1=3$, we know from Step 2 that $V$ is smooth in $B_{2\rho}(x)\setminus \{x\}$ and we need to show that $x\in \operatorname{Reg}(\Sigma)$. By the previous step, $\Sigma$ is a smooth almost embedded $c$-stable surface in $B_{2\rho}(x)\setminus \{x\}$. To  remove the singularity we aim  to apply\footnote{Although~\cite[Theorem 2, Page 250]{Whitesingrimovibili} is stated for embedded surfaces, the proof requires minor adaptations to work for the almost embedded surface $\Sigma$.}~\cite[Theorem 2, Page 250]{Whitesingrimovibili}, provided we can show that
  \begin{equation}\label{curv}
    \int_{\Sigma\cap B_{\rho}(x)}  |A|^2 <\infty.
\end{equation}
This follows from the inequality~\eqref{eq:stability}, the mass ratio bound and a classical capacity argument. To verify it, note that using the bound
\[
\mathcal H^{2}(\Sigma \cap B_{r}(x))\le C r^{2}
\]
and a standard logarithm cut-off trick, one can easily construct a sequence of functions \(\psi_{\varepsilon}\in C_{c}^1(B_{2\rho}(x)\setminus\{x\})\) such that, as $\varepsilon\downarrow 0$,
\[
\psi_{\varepsilon}(y) \uparrow 1\qquad \text{for all \(y\in B_{\rho}(x)\setminus\{x\}\)}
\]
and
\[
\sup_{\varepsilon} \int_\Sigma|\nabla \psi_{\varepsilon}|^2+|\psi_{\varepsilon}|^2 <+\infty.
\]
By plugging this sequence in~\eqref{eq:stability} and by letting \(\varepsilon\to 0\), we get~\eqref{curv}.

\medskip
 \noindent \textbf{Step 4:} {\emph{We prove that for every $\bar L$-admissible collection of annuli, $\Sigma$ is $c$-stable in at least one annulus.}}

  One can argue as in~\cite[Theorem 3.3]{Pitts1981}, except changing the target functional from the area functional to $\mathbf{\Phi}^c$, that if a varifold is $c$-almost minimizing for $\Phi$ in an open set, then the varifold is $c$-stable in the same open set. Then the conclusion follows from Theorem~\ref{thm: existence of almost minimizing varifold} (2).

\end{proof}

 \begin{proof}[Proof of Lemma~\ref{lemma1}] The proof follows from~\cite[Lemma 2.11]{DPM1}, which in turn is inspired by~\cite{Hardt77}. The idea is to show that there exists a tangent varifold \(W\) which is contained in a ``minimal'' wedge. If its support does not contain the boundaries of the wedge then one can construct a nonaffine graph which is \(\Phi_{0}\) stationary and that, by Hopf maximum principle, has  a smaller slope at the origin. By taking a further blow up, we get a contradiction with the minimality of the wedge.

 Recall that \(H=\{p_{1}\ge 0\}\). We will   denote with $G^+:=G\cap H$ for every $G\subset\mathbb R^{n+1}$. Arguing as in the proof of~\cite[Lemma 6.2]{DePhilippis_DeRosa2024}, we deduce the existence of $L=L(n,\Phi,c)$ depending only on the integrand $\Phi$ and the constant mean curvature $c$ such that
    \begin{equation}\label{c2}
      \sup_{(\spt  W)^{+}}\,\frac{|\langle p, e_{n+1}\rangle|}{\langle p , e_1\rangle}\le L\,,\qquad\forall\, W\in TV(z,V'')\,.
    \end{equation}
    Moreover
    \begin{equation}
      \label{c5}
      \spt W \cap \partial H = \ell=\mbox{span}(e_2,\dots,e_{n})\,,\qquad\forall\, W\in TV(z,V'')\,.
    \end{equation}
    We define $\xi:TV(z,V'')\to[-L,L]$ as
    \begin{equation}\label{c6}
      \xi(W)=\inf_{(\spt W)^{+}}\,\frac {\langle p, e_{n+1}\rangle}{\langle p , e_1\rangle}\,,\qquad \qquad\forall\, W\in TV(z,V'').
    \end{equation}
    As in the proof of~\cite[Lemma 5.4]{DPM1}, one can easily check that $\xi$ is upper semicontinuous on $TV(z,\Omega)$ with respect to the $L^1_{{\rm loc}}(\mathbb R^{n+1})$ convergence and hence the existence of $W_{1}\in TV(z,V'')$ such that
    \begin{equation}\label{c7}
      \xi(W_1)\ge\xi(W)\,,\qquad\forall W\in TV(z,V'')\,.
    \end{equation}

    Let us fix \(\alpha \in (-\pi/2,\pi /2)\) so that $\tan \alpha=\xi(W_1)$ and set
    \[
      \nu_1=\cos \alpha\, e_{n+1}-\sin \alpha \,e_1\in \mathbb S^{n}\,,\qquad H_1=\Big\{p\in H:\langle p, \nu_1\rangle \ge0\Big\}\,.
    \]
    We claim  that
    \begin{equation}\label{ccccc}
      (\partial H_1)^+ \subset (\spt W_1)^+\,.
    \end{equation}
    Indeed, by definition of $\xi$, it holds
    \begin{equation}
      \label{c8}
    (\spt W_1)^+\subset  H_1\,.
    \end{equation}
    Moreover, denoting with $w:\{q\in\mathbb R^{n}:q_1>0\}\to [-\infty,+\infty)$ the function satisfying
    \[
      w(q)=\inf\Big\{t\in\mathbb R:  (q,t)\in \spt W_1\Big\}\,,\qquad \forall q\in \mathbb R^{n}:q_1>0\,,
    \]
    we deduce from~\eqref{c5},~\eqref{c8}, and the lower semicontinuity of $w$ that
    \begin{align}
      \label{c9}
      (\spt W_{1})^{+}\subset\Big\{p\in H:p_{n+1}\ge w(p_1,\dots,p_n)\Big\}\,,&&
      \\
      \label{c10}
      w(q)\ge \xi(W_1)\,q_1\,,&&\quad\forall q\in\mathbb R^{n}:q_1>0\,.
    \end{align}
    If~\eqref{ccccc} fails, then there exists $\bar p:=(\bar q,\bar p_{n+1})=:(\bar p_1,\dots,\bar p_n, \bar p_{n+1})\in (\spt W_{1})^{+}$ such that
    \begin{equation}
      \label{c11}
      w(\bar q)> \xi(W_1)\,\bar p_1\,.
    \end{equation}
    By~\eqref{c10} and~\eqref{c11}, if we set \(\bar r=|\bar q|\), and $D_{\bar r}=B_{\bar r}\cap \mbox{span}(e_1,\dots,e_{n})$, then we can find \(\varphi\in C^{1,1}(\partial(D_{\bar r}^+))\) such that
    \begin{eqnarray}\label{c12}
      w(q)\ge \varphi (q)\ge \xi(W_1)\,\langle q, e_1\rangle\,,&&\qquad\forall q\in \partial (D_{\bar r}\cap H)
      \\
      \varphi(\bar q)>\xi(W_1)\, \langle \bar q, e_1\rangle=\xi(W_1)\,  \bar p_1\,.&&
    \end{eqnarray}
    In particular, $\varphi=0 $ on $D_{\bar r}\cap\partial H$. By part two of~\cite[Lemma 2.11]{DPM1}, there exists \(u\in C^{1,1}(D_{\bar r}^+)\cap \mbox{Lip}(\overline D_{\bar r}^+)\) such that, if we set $G_0^\#(q)=\Phi(0;q,-1)$ for $q\in\mathbb R^{n}$, then
    \begin{equation*}
      \begin{cases}
        \mbox{div} (\nabla_{q} G_0^\#(\nabla u))=0\,,\qquad&\textrm{in \(D_{\bar r}^+\)}\,,
        \\
        u=\varphi\,,&\textrm{on \(\partial (D_{\bar r}^+)\)}\,,
      \end{cases}
    \end{equation*}
    with
    \begin{equation}\label{c14}
      |\nabla u(0)|=|\langle \nabla u(0), e_1\rangle |, \qquad \mbox{and} \qquad \langle \nabla u(0), e_1\rangle >\xi(W_1)\,.
    \end{equation}
  Recalling that  $\delta_\mathbf{\Phi_0}W=0$, where $\Phi_0(v)= \Phi(z,v)$ we can combine~\eqref{c9} and~\eqref{c12} with the maximum principle~\cite{DDH}, to deduce that
    \begin{equation}
      \label{c15}
 (\spt W_{1})^{+}  \cap(D_{\bar r}^+\times\mathbb R)  \subset  \Big\{(q,t)\in D_{\bar r}^+\times\mathbb R:t\le u(q)\Big\} \,.
    \end{equation}
    We now pick a sequence $\{s_h\}$ such that $s_h\to 0$ as $h\to\infty$ and $\eta_{0,s_h}(W_{1})\to\widetilde{W}\in T(0,W_{1})$. By~\eqref{c15} and $u(0)=0$, we get
    \[
      (\spt \widetilde W)^{+}\subset   \Big\{(q,t):t\ge \langle \nabla u_0(0), e_1\rangle \langle q , e_1\rangle \Big\}\,,
    \]
    so that, thanks to~\eqref{c14}, $\xi(\widetilde{W})>\xi(W_1)$. Since $\widetilde{W}\in TV(0,W_1)\subset TV(z,W)$, this contradicts~\eqref{c7}, thereby completing the proof of~\eqref{ccccc} and identifying $\nu_1$ in the statement~\eqref{c1} of the lemma.

    Since $TV(0,W_1)\subset TV(z,W)$, in order to identify $\nu_2$ we can argue analogously as above performing another blow-up  $W_1$ to obtain  $W_2\in TV(0,W_{1})$  and to identify the vector $\nu_2\in \mathbb S^n$. Since \(W_{2} \subset TV(0,W_1)\subset TV(z,V'')\), this concludes the proof.
  \end{proof}

    \begin{proof}[Proof of Lemma~\ref{lem:unosolo} ]
    Recall that  $\Phi_0(v)=\Phi(z,v)$ is the blow-up integrand. We recall that, by~\cite[Equation (10)]{DDH}, the \(\Phi_{0}\)-anisotropic first variation of an half-plane with normal $\nu$ and conormal $\eta$, and bounded by an $(n-1)$-plane $ \ell$ is given by
    \begin{equation}\label{fvhp}
     (\Phi_0(\nu)\eta-\langle D \Phi_0(\nu), \eta\rangle \nu)  \mathcal H^{n-1}\res  \ell.
    \end{equation}
    Assume by contradiction that $\nu_1\neq\nu_2$. Combining~\eqref{plane} with~\eqref{c1}, we have that:
    \[
    W \geq |T_z\Sigma'|\res  \{p_1 \leq 0\}+ |\{\langle \nu_1 , p\rangle = 0 \}|\res \overline{H} +|\{\langle \nu_2 , p\rangle = 0 \}| \res \overline{H}  =:\tilde W.
    \]
    By~\eqref{fvhp}, we easily compute that, up to choose $\varepsilon$ small enough (depending only on $\lambda$):
    \[
      \delta_\mathbf{\Phi_0} \tilde W:= (- \Phi_0(e_{n+1})e_1+\langle  D\Phi_0(e_{n+1}), e_1\rangle e_{n+1}+w)\mathcal H^{n-1}\res \ell ,
      \]
    where $w\in \operatorname{span}(e_1,e_{n+1})\subset \mathbb R^{n+1}$ with $|w|\leq O(\varepsilon)\ll1$. Since $\delta_\mathbf{\Phi_0} W=0$, by the linearity of the anisotropic first variation we deduce that
    \[
      \delta_\mathbf{\Phi_0}(W-\tilde W)=( \Phi_0(e_{n+1})e_1-\langle D\Phi_0(e_{n+1}), e_1\rangle e_{n+1}-w)\mathcal H^{n-1}\res \ell.
      \]
    We now observe that there exists $\bar \nu \in \operatorname{span}(e_1,e_{n+1})$, with $\bar \nu_1>0$, such that
    \begin{equation}\label{brutta}
        \frac{\Phi_0(e_{n+1})e_1-\langle D \Phi_0(e_{n+1}), e_1\rangle e_{n+1}-w}{|\Phi_0(e_{n+1})e_1-\langle D \Phi_0(e_{n+1}), e_1\rangle e_{n+1}-w|} =\frac{\Phi_0(\bar \nu) \tilde \nu-\langle D \Phi_0(\bar \nu), \tilde \nu\rangle \bar \nu}{|\Phi_0(\bar \nu)\tilde \nu-\langle D \Phi_0(\bar \nu), \tilde \nu\rangle \bar \nu|},
    \end{equation}
    where $\tilde \nu:=(-\bar\nu_{n+1},0,\dots,0,\bar\nu_{1})\in \operatorname{span}(e_1,e_{n+1})$ is orthogonal to $\bar \nu$. This is easily obtained by continuity of the right hand side in~\eqref{brutta} and the intermediate value theorem, provided $\varepsilon$ is chosen small enough.  Hence, there exists $\theta_3\geq 0$ such that:
    \[
      \delta_\mathbf{\Phi_0}(\theta_3|\bar \nu^\perp|\res\{p_1\geq 0\})=(-\Phi_0(e_{n+1})e_1+\langle D \Phi_0(e_{n+1}), e_1\rangle e_{n+1}+w)\mathcal H^{n-1}\res \ell,
      \]
    where we have denoted by \(|\bar\nu^{\perp}|\) the varifold associated with the plane  perpendicular to \(\bar\nu\). Again by linearity of the  first variation, we conclude that
    \[
      \delta_\mathbf{\Phi_0}(W-\tilde W +\theta_3|\bar \nu^\perp|\res\{p_1\geq 0\})=0,
      \]
    which contradicts the maximum principle~\cite{DDH}, since spt$(W-\tilde W +\theta_3|\bar \nu^\perp|\res\{p_1\geq 0\})$ is contained in a wedge up to choose $\varepsilon$ small enough.

    Hence $\nu_1=\nu_2$ and~\eqref{cc1} holds. In particular
    \[
      W=|T_z\Sigma'| \res (\mathbb R^n \setminus H) +|\{p\in H: \langle \nu_1 , p\rangle = 0 \}|
      \]
    and, since \(\delta_{\Phi_{0}}W=0\), it is easy to see that the only possibility is that \(\nu_{1}\perp T_{z}\Sigma'\), concluding the proof.
  \end{proof}

\section{Proofs of the main results}\label{sec:mainproofs}
The starting point is the following theorem, which is stronger than the statement of Theorem~\ref{thm:main2}:
\begin{theorem}\label{thm:main3}
  Given \(\lambda, \Lambda>0\), any smooth closed Riemannian manifold $(M^{n+1}, g)$, any smooth elliptic integrand $\Phi$ satisfying~\eqref{limitato}, and $ c\in (0,\Lambda]$, there exists an almost embedded hypersurface $\Sigma$ with anisotropic mean curvature equal to \(c\), such that $\mathcal{H}^{n-2}(\operatorname{Sing}(\Sigma)) = 0$. Moreover, $\Sigma$ has a mass ratio upper bound given by  a constant $C= C(M, g, \lambda, \Lambda)$, with $\Theta(V,x)=1$ for every $x\in \mathcal R(\Sigma)$ and $\Theta(V,x)=2$ for every $x\in \mathcal S(\Sigma)$. Additionally,
  \[
  \mathbf{\Phi}(\Sigma)\leq 2(W^c_\Phi(M, g)+c\Vol(M)).
  \]
  Furthermore, for every $\bar L$-admissible collection of annuli, $\Sigma$ is $c$-stable in at least one annulus, where \(\bar L\) is as in Theorem~\ref{thm: existence of almost minimizing varifold}.
\end{theorem}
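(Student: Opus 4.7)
The plan is to assemble Theorem~\ref{thm:main3} directly from the three main results of Sections~\ref{sec:sweepouts}, \ref{sec: Alomost minimizing varifolds and unifrom mass ratio}, and~\ref{sec:Regularity of min-max minimal hypersurfaces}. First, I would apply Theorem~\ref{thm:exist_ONVP_bound} to produce an anisotropic ONVP sweepout $\Gamma = \partial^*\Omega$ for $(\Phi,c)$ with uniform mass ratio upper bound $C_0 = C_0(M,g,\lambda,\Lambda)$. Next, I would feed $\Gamma$ into Theorem~\ref{thm: existence of almost minimizing varifold} to obtain an integral $n$-varifold $V\in \mathbf{C}(\Gamma)\cap \mathcal{IV}(M)$ with anisotropic first variation bounded by $c$, which is $c$-almost minimizing for $\Phi$ in every $\bar L$-admissible collection of annuli by means of a min-max sequence $\Omega(t_i)$ extracted from $\Gamma$. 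Because $V$ is a subsequential varifold limit of $|\Gamma_{t_i}|$ and the $\Gamma_{t_i}$'s carry the uniform mass ratio upper bound $C_0$, the same bound is inherited by $V$ in the limit.

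The third step would be to invoke the regularity result Theorem~\ref{thm:regularity} applied to this specific $V$. Since its hypotheses are precisely those built in the previous step, the conclusion is that $V=|\Sigma|$ for an almost embedded hypersurface $\Sigma$ with $\mathcal{H}^{n-2}(\operatorname{Sing}(\Sigma))=0$, with $\Theta(V,x)=1$ on $\mathcal{R}(\Sigma)$ and $\Theta(V,x)=2$ on $\mathcal{S}(\Sigma)$. The first variation bound from Theorem~\ref{thm: existence of almost minimizing varifold}(1) combined with this regularity implies that the generalized anisotropic mean curvature of $\Sigma$ equals $c$ (with respect to the orientation provided by the replacement structure from Proposition~\ref{prop:replacements}). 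The mass-ratio upper bound $C=C(M,g,\lambda,\Lambda)$ for $\Sigma$ is inherited from $V$, and the $c$-stability on at least one annulus of every $\bar L$-admissible collection is exactly Step~4 of the proof of Theorem~\ref{thm:regularity}.

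The last step is the mass estimate. Since $V\in \mathbf{C}(\Gamma)$, there is a min-max sequence $t_i$ with $V=\lim_{i\to\infty}|\Gamma_{t_i}|$ and $\lim_{i\to\infty}\mathbf{\Phi}^c(\Omega(t_i))=W^c_\Phi(M,g)$. By the optimality of $\Gamma$,
\[
\mathbf{\Phi}(\Gamma_{t_i}) \;=\; \mathbf{\Phi}^c(\Omega(t_i)) + c\,\Vol(\Omega(t_i)) \;\le\; W^c_\Phi(M,g) + c\,\Vol(M),
\]
and lower semicontinuity of $\mathbf{\Phi}$ under varifold convergence yields $\|V\|_{\Phi}(M)\le W^c_\Phi(M,g)+c\,\Vol(M)$. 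Using $V=|\Sigma|$ together with the density information $\Theta(V,\cdot)\in\{1,2\}$ on $\Sigma$ (and $\mathcal{H}^{n-1}$-control of the touching set where density equals $2$), one concludes $\mathbf{\Phi}(\Sigma)\le 2\,\|V\|_\Phi(M)\le 2(W^c_\Phi(M,g)+c\,\Vol(M))$, which is the required inequality.

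I do not expect any real obstacle: the theorem is essentially a packaging statement. The only small checks are (i)~that the ONVP property is what allows one to bound $\mathbf{\Phi}(\Gamma_{t_i})$ in terms of $W^c_\Phi$ plus the global volume (we use $\sup_t \mathbf{\Phi}^c(\Omega(t))=W^c_\Phi$ and $\Vol(\Omega(t))\le \Vol(M)$), and (ii)~that the same sequence $\Omega(t_i)$ used to verify $c$-almost minimality and to realize the critical varifold is the one exhibited by Theorem~\ref{thm: existence of almost minimizing varifold}, so all density, regularity, and stability conclusions of Theorem~\ref{thm:regularity} apply to this particular $V$ without any further construction.
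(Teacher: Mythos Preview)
Your proposal is correct and follows exactly the same approach as the paper, which simply writes ``This immediately follows by combining Theorem~\ref{thm: existence of almost minimizing varifold} with Theorem~\ref{thm:regularity}.'' You supply additional detail that the paper leaves implicit, in particular the derivation of the energy bound $\mathbf{\Phi}(\Sigma)\le 2(W^c_\Phi+c\Vol(M))$ from optimality of the ONVP sweepout; note, however, that since $V=|\Sigma|$ with the stated densities one actually has $\mathbf{\Phi}(\Sigma)=\mathbf{\Phi}(V)$ (the functional on varifolds is continuous under weak-* convergence, not merely lower semicontinuous), so the factor~$2$ is not strictly needed in your intermediate step.
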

\begin{proof}
This immediately follows by combining Theorem~\ref{thm: existence of almost minimizing varifold} with Theorem~\ref{thm:regularity}.
\end{proof}

\begin{proof}[Proof of Theorem~\ref{thm:main2}]
  Theorem~\ref{thm:main2} is implied by Theorem~\ref{thm:main3}.
\end{proof}

\begin{proof}[Proof of Theorem~\ref{thm:main1}]
  Consider the sequence $c_k\to 0$. By Theorem~\ref{thm:main3},  there exists  a sequence of nontrivial hypersurfaces $\Sigma_k$ , which are smooth and almost embedded outside of a singular set of zero $\mathcal H^{n-2}$-measure, with constant anisotropic mean curvature $c_k$ and mass ratio upper bound $C$ independent of $k$, such that
  \[
  \mathbf{\Phi}(\Sigma_k)\leq 2(W^{c_k}_\Phi(M, g)+c_k\Vol(M)).
  \]
  In particular, by~\eqref{limitato} and Lemma~\ref{lemma:utiledopo}, we get that
  \[
  \sup_{k} \mathcal{H}^n(\Sigma_k)< \infty.
  \]
  Moreover, for every $\bar L$-admissible collection of annuli, $\Sigma_k$ is $c_k$-stable in at least one annulus.

  Therefore, $|\Sigma_k|$ converges to a \(\Phi\)-stationary varifold $V$. It suffices to show that $V$ is associated with a smooth hypersurface (with integer multiplicities) except for a codimension $2$ Hausdorff measure $0$ set.

  We claim that for every $p \in M$, there exists a radius $r_s(p) > 0$, such that for any $0 < s < r \leq r_s(p)$, there exists a subsequence $\{\Sigma_{k_l}\}$ such that every $\Sigma_{k_l}$ is $c_{k_l}$-stable in $A(p, s, r)$. Indeed, if this is true, by Theorem~\ref{T:compact} (ii), $\spt V$ is smooth and stable in $B_{r_s(p)}(p) \setminus \{p\}$ except for a codimension $2$ Hausdorff measure $0$ set. If $n \geq 3$, we are done by a finite covering argument; if $n = 2$, using the mass ratio uniform upper bound and arguing as in the proof of Theorem~\ref{thm:regularity} (Step 3), we can remove the singular point $p$ by means of the stability inequality and conclude the proof.

  Suppose for the sake of contradiction that the claim fails. Then there exists $p \in M$ such that for every $r \in (0, \operatorname{inj}(M))$, there exists an $s(r) > 0$ and $N(r) \in \mathbb{N}^+$ such that for every $k \geq N(r)$, $\Sigma_k$ is not $c_k$-stable in $A(p, s(r), r)$. Therefore, we can inductively choose 
  \[
    \begin{aligned}
      r_1 &\in (0, \operatorname{inj}(M)), & s_1 &:= s(r_1), & N_1 &:= N(r_1), \\
      r_2 &\in (0, s_1/2), & s_2 &:= s(r_2), & N_2 &:= N(r_2),\\
      & \cdots && \cdots && \cdots\\
      r_{\bar L} &\in (0, s_{\bar L - 1}/2), & s_{\bar L} &:= s(r_{\bar L}, & N_{\bar L} &:= N(r_{\bar L}).
    \end{aligned}
  \]
  Then, let $N := \max\{N_1, N_2, \cdots, N_{\bar L}\}$, and we see that $\Sigma_N$ is not $c_N$-stable in any annulus in the collection
  \[
    \mathscr{C} := \{\an(p, s_1, r_1), \an(p, s_2, r_2), \cdots, \an(p, s_{\bar L}, r_{\bar L})\}\,.
  \]
  However, by  construction, $\mathscr{C}$ is an $\bar L$-admissible collection of annuli, contradicting that $\Sigma_N$ is $c_N$-stable in at least one annulus, for every $\bar L$-admissible collection of annuli. This completes our proof.

\end{proof}

\appendix

\section{Compactness} \label{sec:compactness}
In this section, we state the main compactness criterion for stable hypersurfaces with bounded anisotropic mean curvature. Being the theory local, and since a Riemannian metric can be absorbed into the anisotropy, we can assume that the ambient space is \(\mathbb R^{n+1}\) with the euclidean metric.
\begin{theorem}\label{compactness}
  Consider $C \geq 0$, and a sequence of $c_k$-stable hypersurfaces $\Sigma_k$ in $B_{4r} \subset M$ with respect to a uniformly elliptic anisotropic integrand $\Phi$, with singular sets satisfying $\mathcal{H}^{n-2}(\operatorname{Sing}(\Sigma_k))=0$, and having a uniform mass ratio upper bound \(C\). Assume that \(c_{k}\to c\in [0,+\infty)\). Then there  exists an integral varifold $V$, such that (up to subsequences) $\Sigma_k$ converge in the sense of varifold to $V$ and
  \[
    \spt\|V\|\cap B_{r}=\Sigma\cap B_{r},
    \]
  where $\Sigma$ is a a \(c\)-stable hypersurface   uniform mass ratio upper bound \(C\) and $\mathcal H^{n-2}(\Sing(\Sigma)\cap B_{r})=0$. Moreover, for any open set \(U\subset B_{r}\setminus \Sing(\Sigma)\), the converges of \(\Sigma_{k}\) to \(\Sigma\) is locally smooth.
\end{theorem}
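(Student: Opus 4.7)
The plan is to proceed in three stages: (i) extract a varifold subsequential limit from the uniform mass ratio bound, (ii) upgrade this limit to an integral rectifiable varifold via the anisotropic rectifiability theorem \cite{DPDRGrect}, and (iii) obtain smooth convergence and the $c$-stability of the limit on the regular set by invoking the anisotropic Allard regularity and curvature estimates from \cite{Allard}, together with the stability inequality~\eqref{eq:stability}.

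First, since each $\Sigma_k$ has a uniform mass ratio upper bound $C$, \eqref{limitato} gives $\sup_k \mathcal{H}^n(\Sigma_k \cap B_{4r}) < \infty$. Because $\Sigma_k$ is $c_k$-stable, its anisotropic mean curvature has modulus $c_k$ on the regular part, and the two sheets through any point of $\mathcal{S}(\Sigma_k)$ share the same $c_k$; hence $\delta_{\mathbf{\Phi}} |\Sigma_k|$ is a Radon measure with total variation locally bounded by $c_k\,\|\Sigma_k\|$. Standard Radon compactness for measures on the Grassmannian bundle, together with $c_k \to c$, then yields (up to subsequence) $|\Sigma_k| \to V$ in the varifold sense, for some $V$ with anisotropic first variation bounded by $c$, and with the mass ratio upper bound $C$ preserved by lower semicontinuity.

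Next, each $|\Sigma_k|$ is integer rectifiable, and $V$ has locally bounded $\mathbf{\Phi}$-first variation with $\Phi$ satisfying the uniform ellipticity in~\eqref{limitato}. By the anisotropic rectifiability theorem \cite[Theorem 1.2]{DPDRGrect}, combined with the density lower bound obtained by the same mass comparison with small balls as in \cite[Lemma 4.15]{DePhilippis_DeRosa2024}, $V$ is integral rectifiable. To upgrade to the structural conclusion, we use~\eqref{eq:stability}, which provides a uniform $L^2$ bound for $|A_{\Sigma_k}|$ on compact subsets of the regular portions of $\Sigma_k$. This, the area bound, and the ellipticity of $\Phi$ supply the hypotheses of the anisotropic Allard-type theorem of \cite{Allard}, which --- after the minor modifications described in the paper to allow for a small singular set --- produces a closed set $\operatorname{Sing}(\Sigma) \subset B_r$ with $\mathcal{H}^{n-2}(\operatorname{Sing}(\Sigma)) = 0$, such that on $B_r \setminus \operatorname{Sing}(\Sigma)$ the varifold $V$ is represented by an almost-embedded hypersurface $\Sigma$, and the convergence $\Sigma_k \to \Sigma$ is locally smooth. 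Passing to the limit in the pointwise CMC equation gives that $\Sigma$ has anisotropic mean curvature $c$, while passing to the limit in the stability inequality (on compact subsets of the regular set, where convergence is smooth) yields $c$-stability.

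The principal obstacle is step (iii): the a priori absence of a monotonicity formula for anisotropic integrands generally obstructs Allard-type regularity. What makes the argument go through here is precisely the \emph{uniform mass ratio upper bound} assumed in the hypothesis, which plays the role that monotonicity plays in the isotropic setting and which is exactly what allows \cite{Allard}'s anisotropic estimates to apply. This is also why Section~\ref{sec:sweepouts} went to such length to construct sweepouts with this property; all the subsequent analysis, including this compactness statement, leverages that bound. Beyond this, the only technical subtleties are the standard adjustments needed to accommodate the touching set $\mathcal{S}(\Sigma_k)$ of an almost-embedded surface (where the multiplicity of $|\Sigma_k|$ is $2$) in both the passage to the limit and in the subsequent density and regularity arguments; these follow the lines of the parallel arguments in \cite{ZhouZhu2019,Zhou-zhu2020,DePhilippis_DeRosa2024}.
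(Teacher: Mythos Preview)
Your outline is correct at the coarse level, but step (iii) hides the entire substance of the argument. Allard's result in \cite{Allard} is an $\varepsilon$-regularity theorem (Theorem~\ref{reg} here): it asserts that if the tilt-excess $R^{-n}\int_{\Sigma\cap C_{3R}}(1-\langle\nu,e_{n+1}\rangle^2)$ and $cR$ are small, then $\Sigma$ is a union of $C^2$ graphs over a disk. It does \emph{not} by itself identify a singular set of vanishing $\mathcal{H}^{n-2}$ measure or deliver smooth convergence off such a set. Saying that the $L^2$ curvature bound and the mass ratio bound ``supply the hypotheses'' of this theorem is exactly the step that needs proof, and your sketch does not contain it.

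The paper fills this gap via a curvature-concentration argument you have not supplied. One passes the measures $\mu_k=|A_{\Sigma_k}|^2\,\mathcal{H}^n\res\Sigma_k$ (uniformly bounded by~\eqref{useful1} and the mass ratio bound) to a weak limit $\mu$, and sets $\mathcal{S}=\{x:\limsup_{t\to 0}t^{2-n}\mu(B_t(x))>0\}$. For $x\notin\mathcal{S}$ a diagonal blow-up along scales with $r_j^{2-n}\mu_j(B_{r_j}(x))\to 0$ produces a tangent varifold $W$ whose \emph{area} first variation vanishes (because $|\delta V_j|\lesssim r_j^{1-n/2}\mu_j(B_{r_j})^{1/2}\to 0$); this is the crucial point, since it recovers the isotropic monotonicity formula at the blow-up scale despite $\Phi$ being anisotropic. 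A further blow-up of $W$ is then a stationary integral cone whose regular part lies in finitely many hyperplanes, and a Schoen--Simon cross-section argument forces it to be a single plane, at which point Theorem~\ref{reg} and Lemma~\ref{lemmaintermedio} apply to the $\Sigma_k$ near $x$. Finally, $\mathcal{H}^{n-2}(\mathcal{S}_\delta)=0$ is obtained by a capacity argument: testing~\eqref{useful1} with cutoffs $g_j$ concentrating on a compact $K\subset\mathcal{S}_\delta$ gives $\mu(V_j)\to 0$ on shrinking neighborhoods, while the density bound yields $\mathcal{H}^{n-2}(K)\lesssim_\delta\mu(U)$. The idea that the blow-up becomes \emph{area}-stationary, and the identification of the singular set with the $(n-2)$-upper-density set of the curvature measure, are the genuine content of the proof and are absent from your proposal.
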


The proof is basically contained in~\cite{Allard}, where  no singular set is a priori allowed. It is however classical (see also the end of the Introduction~\cite{Allard}) to combine the mass ratio  bound with a capacity argument to show that one can deal with a singular set of vanishing \(\mathcal H^{n-2}\) measure since the singular set  is not seen by the stability inequality. In particular,  while a-priori the stability inequality is  only valid for \(\varphi \in C_c^1\) whose support does not intersect \(\Sing (\Sigma)\), a capacity argument identical to the one at the end of Section 2 in~\cite{SchoenSimon} implies that it can be extended to all \(\varphi\). In particular we record for future use the following generalization of~\eqref{eq:stability}
\begin{equation}
    \label{useful1}
        \int_\Sigma \varphi^2|A_\Sigma|^2\le C\int_\Sigma |\nabla \varphi|^2+\varphi^2. \qquad \text{for all \(\varphi\in C_c^1(\Sigma)\)}
\end{equation}
 With this caveat in mind we state the following, which is~\cite[Section 4.1, The Main Theorem]{Allard}.

\begin{theorem}\label{reg}
Let \(\Phi\) be an integrand satisfying~\eqref{limitato}  and \(C>0\). Then there exists a constant \(\bar\varepsilon=\varepsilon(n,\lambda,C)\) such that for  $c$-stable almost embedded hypersurface $\Sigma \subset M$ having a mass ratio upper bound \(C\) and  satisfying $\mathcal{H}^{n-2}(\operatorname{Sing}(\Sigma))=0$ the following holds.
If $R\in (0,\infty)$, $0\in \Sigma$,
\[
cR+R^{-n}\int_{\Sigma \cap C_{3R}} 1-\langle \nu(x), e_{n+1} \rangle^2 d\mathcal H^n \leq \bar\varepsilon.
\]
Then there exists a positive $\kappa \in \mathbb N$ and $C^2$ functions $f_i:D_{R} \to \mathbb R$, $i=1, \dots, \kappa$, such that
  \[
  \Sigma \cap C_{R}=\cup_{i=1}^\kappa \operatorname{graph}(f_i) \cap C_{R}
  \]
  and
  \[
  \|Df_i\|_\infty\leq C(n,\lambda)\bar\varepsilon \qquad \forall i=1,\dots,\kappa
  \]
  Here we have denoted by \(D_{R}\) the \(n\) dimensional disk  of radius \(R\) and center \(0\) and by \(C_{R}=D_{R}\times \mathbb R\) the \((n+1)\) dimensional cylinder.
\end{theorem}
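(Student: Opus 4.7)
My plan is to follow the strategy of Allard~\cite{Allard}, adapted to allow a singular set of vanishing $\mathcal{H}^{n-2}$ measure via a capacity argument. The goal is a tilt--excess decay at the origin, which upgrades via Morrey--Campanato to $C^{1,\alpha}$ graphicality, and then to $C^{2}$ via the anisotropic CMC equation.

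First I would set up the quantitative version of the stability inequality. Since $\mathcal{H}^{n-2}(\mathrm{Sing}(\Sigma))=0$, a standard capacity cut-off (as at the end of Section~2 of~\cite{SchoenSimon}, and already recorded in~\eqref{useful1}) allows us to test stability with any $\varphi\in C^1_c(\Sigma)$, not merely with those whose support avoids $\mathrm{Sing}(\Sigma)$. Plugging in suitable cut-offs localized in cylinders $C_{\rho}\subset C_{3R}$ and using the mass ratio upper bound $C$, I get an $L^2$ bound on the second fundamental form in terms of the tilt-excess, of the form
\[
\int_{\Sigma\cap C_{2R}} |A_\Sigma|^2\, d\mathcal{H}^n \leq C(n,\lambda,C)\Big(c^2 R^n + R^{n-2}\int_{\Sigma\cap C_{3R}}\bigl(1-\langle\nu,e_{n+1}\rangle^{2}\bigr)\,d\mathcal{H}^n\Big).
\]
Combined with the smallness hypothesis $cR + R^{-n}\int(1-\langle\nu,e_{n+1}\rangle^{2})\leq \bar\varepsilon$, this provides the integral curvature control that replaces the monotonicity-based area control in Allard's isotropic argument.

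Next I would run the Lipschitz approximation / harmonic blow-up scheme. Using that $\Sigma$ is nearly horizontal in $L^2$ and the $|A|^2$ bound, there is a closed "bad" set of small measure outside of which $\Sigma$ can be written as a finite union of Lipschitz graphs with small Lipschitz constant; the touching set $\mathcal{S}(\Sigma)$ and the singular set $\mathrm{Sing}(\Sigma)$ get absorbed into the bad set since the former is at most $(n-1)$-dimensional and the latter has vanishing $\mathcal{H}^{n-2}$ measure, hence is hit by no positive-measure portion of a horizontal disk. Blowing up a sequence of potential counterexamples at decreasing scales, passing to a harmonic limit on each sheet using the linearized anisotropic stationarity equation (with right-hand side of order $c$, which vanishes in the limit since $cR\to 0$), and exploiting smooth estimates for this linear elliptic system, I obtain the tilt-excess decay at $0$:
\[
\rho^{-n}\int_{\Sigma\cap C_{\theta\rho}}\bigl(1-\langle\nu,e_{n+1}\rangle^{2}\bigr)\,d\mathcal{H}^n \leq \theta^{2\alpha}\,\rho^{-n}\int_{\Sigma\cap C_{\rho}}\bigl(1-\langle\nu,e_{n+1}\rangle^{2}\bigr)\,d\mathcal{H}^n + C(\theta\rho\, c)^2,
\]
for some $\theta\in(0,1/4)$ and $\alpha\in(0,1)$.

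Iterating this decay inequality at each point of $\Sigma\cap C_{R/2}$ (which is justified since the smallness of the excess is preserved down the scales) yields a uniform $C^{1,\alpha}$ graphical representation of each sheet over $D_R$, with $\|Df_i\|_\infty\leq C(n,\lambda)\bar\varepsilon$. Finally, with $C^{1,\alpha}$ in hand, each graph satisfies the anisotropic constant-mean-curvature equation $\mathrm{div}(D_v\Phi(x,\nabla f_i,-1))=c$ in the classical sense; Schauder estimates for this quasilinear elliptic equation (uniformly elliptic once the gradient is bounded, thanks to the uniform ellipticity assumption in~\eqref{limitato}) upgrade $f_i$ to $C^2$. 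The main obstacle is the blow-up step: one has to ensure that the Lipschitz approximation and harmonic limit remain meaningful across the touching and singular sets, which is precisely where the mass ratio bound and the capacity-extended stability inequality are essential.
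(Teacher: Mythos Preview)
The paper does not give its own proof of this theorem: it is stated as a direct citation of Allard's main regularity result \cite[Section~4.1, The Main Theorem]{Allard}, with the sole addendum (in the paragraph immediately preceding the statement) that the hypothesis $\mathcal{H}^{n-2}(\operatorname{Sing}(\Sigma))=0$ can be accommodated via the capacity argument of \cite{SchoenSimon}, exactly as you invoke in your first paragraph to obtain~\eqref{useful1}. Your outline---Lipschitz approximation, harmonic (linearized) blow-up, tilt-excess decay, iteration to $C^{1,\alpha}$, Schauder upgrade to $C^2$---is precisely the scheme of Allard's paper, so your approach coincides with what the paper attributes to \cite{Allard}; there is nothing further to compare since the paper treats the result as a black box.

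One small caution: the displayed inequality you write for $\int_{\Sigma\cap C_{2R}}|A_\Sigma|^2$ in terms of the tilt-excess does not follow from~\eqref{useful1} alone by plugging in a cut-off (that only gives $\int|A|^2\lesssim R^{n-2}$ via the mass ratio bound). The refined dependence on the tilt-excess in Allard's argument comes from a more delicate interplay between stability and the first-variation identity, so if you were to write out the proof in full you would need to be more careful at that step.
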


We also need the following classical lemma:
\begin{lemma}\label{lemmaintermedio}
  Consider a sequence of $c_k$-stable almost embedded hypersurfaces $\Sigma_k \subset M$ with respect to uniformly elliptic anisotropic integrands $\Phi_k$, with singular set satisfying $\mathcal{H}^{n-2}(\operatorname{Sing}(\Sigma_k))=0$, and having a uniform mass ratio upper bound $C_0$, such that
  \begin{itemize}
  \item[(i)] $\sup_k \mathcal H^n(\Sigma_k \cap C_{3R})<\infty$;
  \item[(ii)] $\sup_k \|\Phi_k\|_{C^3}<\infty$ and $\lim_k\|D_x\Phi_k\|_\infty = 0$;
  \item[(iii)] $\lim_k c_k=0$;
  \item[(iv)] $\lim_k\int_{\Sigma_k\cap C_{3R}}|\langle x, e_{n+1} \rangle |d\mathcal H^n=0$.
  \end{itemize}
  Up to subsequences we have that
  \[
    \lim_k \int_{\Sigma_k \cap C_{3R}} 1-\langle \nu_k(x), e_{n+1} \rangle^2 d\mathcal H^n=0.
    \]
\end{lemma}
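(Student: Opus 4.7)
The plan is to extract a varifold limit of $|\Sigma_k|$, identify it as a multiplicity $k\in\mathbb N$ copy of the horizontal hyperplane $D_{3R}\times\{0\}$ using (iv), and then split the tilt integral over $C_{3R}$ into three pieces handled respectively by smooth local convergence, Chebyshev, and the uniform mass ratio bound.

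First, by (i), (ii), and the mass ratio bound, I would pass to a subsequence with $|\Sigma_k|\rightharpoonup V$ as Radon measures on $\mathbb R^{n+1}$ and with $\Phi_k\to\Phi_\infty$ in $C^2_{\mathrm{loc}}$, where $\Phi_\infty=\Phi_\infty(\nu)$ is translation invariant and uniformly elliptic (using $\|D_x\Phi_k\|_\infty\to 0$). Testing $\|V\|$ against any $\chi\in C_c(C_{3R})$ and invoking (iv) gives $\int\chi|\langle x,e_{n+1}\rangle|\,dV=0$, so $\mathrm{spt}\|V\|\cap C_{3R}\subset\{x_{n+1}=0\}$. Applying Theorem~\ref{compactness} on balls compactly contained in $C_{3R}$ (with limiting constant $c=0$ by (iii)), the limit $V$ is an integer-multiplicity $\Phi_\infty$-stationary stable hypersurface, and $\Sigma_k\to V$ locally smoothly outside an $\mathcal{H}^{n-2}$-null singular set. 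The planar support forces the tangent plane on the regular part to equal $e_{n+1}^\perp$, and stationarity (no interior boundary on the connected slice $D_{3R}\times\{0\}$) then yields $V\res C_{3R}=k\cdot|D_{3R}\times\{0\}|$ for some $k\in\mathbb N$.

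Next, fix $\eta\in(0,R)$ and split
\[
\int_{\Sigma_k\cap C_{3R}}(1-\langle\nu_k,e_{n+1}\rangle^2)\,d\mathcal H^n=I_1+I_2+I_3,
\]
where $I_1$, $I_2$, $I_3$ are the integrals over $\Sigma_k$ intersected with $D_{3R-\eta}\times[-\eta,\eta]$, $D_{3R}\times(\mathbb R\setminus[-\eta,\eta])$, and $(D_{3R}\setminus D_{3R-\eta})\times[-\eta,\eta]$, respectively. On the compactly contained slab, Theorem~\ref{compactness} gives local smooth graphical convergence, so $\langle\nu_k,e_{n+1}\rangle\to\pm 1$ uniformly and $I_1\to 0$; Chebyshev together with (iv) gives $\|\Sigma_k\|(\{|x_{n+1}|>\eta\})\leq\eta^{-1}\int|x_{n+1}|\,d\|\Sigma_k\|\to 0$, whence $I_2\to 0$; and the lateral annulus admits a cover by $O(R^{n-1}/\eta^{n-1})$ balls of radius $\eta$, each of mass at most $C_0\eta^n$ by the uniform mass ratio bound, so $I_3\leq CR^{n-1}\eta$ uniformly in $k$. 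Letting $k\to\infty$ and then $\eta\to 0$ completes the argument.

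The main obstacle is the non-compactness of the cylinder $C_{3R}$, which prevents a direct use of varifold convergence to pass to the limit in $\int(1-\langle\nu_k,e_{n+1}\rangle^2)\,d\mathcal H^n$. The vertical tail is handled by (iv) through Chebyshev, whereas the lateral-boundary contribution depends on the crucial observation that the uniform mass ratio bound, applied at the same scale $\eta$ as the thickness of the lateral annulus, yields a total mass estimate linear in $\eta$ and thus vanishing as $\eta\to 0$.
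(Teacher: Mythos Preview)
Your argument is circular. You invoke Theorem~\ref{compactness} both to identify $V$ as a multiple of the horizontal plane and to get the local smooth convergence used for $I_1$, but Lemma~\ref{lemmaintermedio} is precisely the ingredient the paper needs inside the proof of Theorem~\ref{compactness}: it is what lets one pass from ``a tangent varifold of $V$ at $x$ is a hyperplane'' to ``the tilt-excess of $\Sigma_k$ is small in a small cylinder about $x$, so Theorem~\ref{reg} applies''. The lemma must therefore be proved without appealing to Theorem~\ref{compactness}.

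The paper does this by a direct first-variation computation, with no regularity theory. After extracting $V_k\rightharpoonup V$ with $\delta_{\Phi_0}V=0$ (from (ii), (iii)) and $\mathrm{spt}\,V\subset\{x_{n+1}=0\}$ (from (iv)), it tests $\delta_{\Phi_0}V$ against the vector field $g(x)=\varphi(x)\,x_{n+1}\,D\Phi_0(e_{n+1})$; the first-variation formula~\eqref{e:firstvariation} together with the strict convexity of $\Phi_0$ then forces $\nu=\pm e_{n+1}$ for $V$-a.e.\ $(x,\nu)$, i.e.\ $\int(1-\langle\nu,e_{n+1}\rangle^2)\,dV=0$.

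Your $I_1/I_2/I_3$ splitting, on the other hand, addresses a point the paper leaves implicit: since $1-\langle\nu,e_{n+1}\rangle^2$ is not compactly supported in the open cylinder, varifold convergence alone does not immediately pass the tilt-excess to the limit. Your Chebyshev bound for $I_2$ and the mass-ratio covering for $I_3$ are correct and supply exactly this. For $I_1$, however, you only need varifold convergence on the compact slab together with $\int(1-\langle\nu,e_{n+1}\rangle^2)\,dV=0$ (obtained as above, or via rectifiability of $V$ combined with $\mathrm{spt}\,V\subset\{x_{n+1}=0\}$), not the smooth convergence from Theorem~\ref{compactness}.
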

\begin{proof}
  To this aim, we just need to show that the varifolds $V_k$ canonically associated to $\Sigma_k$ converge (up to subsequences) in $C_{3R}$ to a varifold $V$ satisfying
  \begin{equation}\label{obiettivo}
    \int_{\Sigma_k \cap C_{3R}} 1-\langle \nu, e_{n+1} \rangle^2 dV(x,\nu)=0.
  \end{equation}
  This is easily verified because by (i) (up to subsequences) $V_k\rightharpoonup V$ in $C_{3R}$. By (ii) we deduce that
  \[
    \delta_{\mathbf{\Phi}_k}V_k\rightharpoonup \delta_{\mathbf{\Phi}_0}V \quad \mbox{in } C_{3R}
    \]
  where $\Phi_0 = \lim_k \Phi_k$ and $D_x\Phi_0=0$. Hence, by (iii) we deduce that $\delta_{\mathbf{\Phi}_0}V=0$ in $C_{3R}$.
  We test $\delta_{\Phi_0}V$  with  the vector field $g(x):=\varphi(x)\langle x, e_{n+1}\rangle D\Phi_{0}(e_{n+1})$, where $\varphi$ is a smooth compactly supported function in $C_{3R}$  obtaining  thanks to~\eqref{e:firstvariation}
\begin{equation}\label{zero}
\begin{split}
      0&=\delta_{\mathbf{\Phi_0}}V(g)=\int \langle \Phi_0(\nu)\Id-\nu\otimes D\Phi_0(\nu), Dg \rangle dV(x,\nu)
      \\
      &=\int (\Phi_0(\nu)\langle D\varphi(x),D\Phi(e_{n+1})\rangle-\langle \nu, D\Phi_{0}(e_{n+1}\rangle\langle D\varphi(x), D\Phi_0(\nu)\rangle)\langle x, e_{n+1}\rangle dV(x,\nu)
\\
           &\quad+ \int \varphi(x)(\Phi_0(v)\langle D\Phi_{0}(e_{n+1}), e_{n+1}\rangle-\langle \nu, D\Phi_{0}(e_{n+1}\rangle\langle e_{n+1}, D\Phi_0(\nu)\rangle) dV(x,\nu)
          \\
      &=\int \varphi(x)\Bigl(\Phi_0(v)\Phi_{0}(e_{n+1})-\langle \nu, D\Phi_{0}(e_{n+1}\rangle\langle e_{n+1}, D\Phi_0(\nu)\rangle\Bigr) dV(x,\nu).
\end{split}
\end{equation}
  Here we used that, by (iv), \(\spt V\subset\{\langle x, e_{n+1}\rangle\}=0\) and that, by homogeneity,
   \[
   \Phi_{0}(e_{n+1}) =\langle D\Phi_{0}(e_{n+1}), e_{n+1}\rangle.
   \]
   By the strict convexity of \(\Phi_{0}\),
   \[
   \Phi_0(v)\Phi_{0}(e_{n+1})-\langle \nu, D\Phi_{0}(e_{n+1}\rangle\langle e_{n+1}, D\Phi_0(\nu)\rangle>0
   \]
   unless \(\nu=\pm e_{n+1}\), see for instance~\cite[Proof of Theorem 1.3]{DPDRGrect}. This implies~\eqref{obiettivo} and concludes the proof.
    \end{proof}

\begin{proof}[Proof of Theorem~\ref{compactness}]
  The convergence (up to subsequence) in the sense of varifold of $\Sigma_k$ to $V$ follows by weak compactness together with the uniform mass ratio upper bound. Moreover $V$ satisfies the following bound on the anisotropic first variation:
  \[
    \|\delta_\mathbf{\Phi} V\|\leq c \|V\|.
    \]
  We also observe that, since $\mathcal{H}^{n-2}(\operatorname{Sing}(\Sigma_k))=0$, with the same capacity argument of~\cite{SchoenSimon}, one can obtain the  lower density estimate in~\cite[Section 2.2, Theorem]{Allard} for $\Sigma_k$ and conclude that $V$ has positive density for $\|V\|$-a.e.\ point in $B_{4r}$. Hence, by the compactness result~\cite[Theorem 4.1]{DeRosa}, we conclude that $V$ is an integral varifold.

We now aim to show that, with the exception of an \((n-2)\)-null set, \(V\) is supported on a smooth hypersurface \(\Sigma\) and that \(\Sigma_{k}\) converge
locally smoothly to \(\Sigma\).  To this end we consider the measures:
\[
 \mu_k:=|A_{\Reg(\Sigma_k)}|^{2} \mathcal{H}^n \res \Reg(\Sigma_k) \cap B_{2r}
\]
where we have set \(\Reg(\Sigma_{k})=\Sigma_{k}\setminus \Sing(\Sigma_{k})\). By~\eqref{useful1} and the uniform mass ratio upper bound we deduce that
\[
\sup_{k}\mu_{k}(B_{2r})<+\infty.
\]
Hence, up to a subsequence, \(\mu_{k}\) weakly*  converge to a measure \(\mu\). We define
\begin{equation*}\label{s}
\mathcal S=\Bigl\{ x\in \spt V: \limsup_{t \to 0}\frac{\mu(B_{t}(x))}{t^{n-2}}>0\Bigr\}.
\end{equation*}
 We claim that at  every point  \(x\in \spt V \setminus \mathcal S\) the convergence of \(\Sigma_{k}\) to \(V\) is locally smooth, in particular \(\spt V\) is regular in a neighborhood of \(x\) and
 \begin{equation}\label{singset}
 \Sing(\Sigma)\subset \mathcal S.
 \end{equation}
Fix any such \(x\) and let \(W\in T(x,V)\) be a tangent varifold to \(V\) along a sequence \(r_{j}\). Note that
\[
\lim_{j}\lim_{k}(\eta_{x,r_{k}})_{\sharp}|\Sigma_{k}|=W \qquad \text{and} \qquad \lim_{j} \lim_{k} \frac{\mu_{k}(B_{r_{j}}(x))}{r_{j}^{n-2}}=0.
\]
Hence, by a standard diagonal argument, we get the existence of sequence \(r_{j}\) such that
\[
V_{j}:=(\eta_{x,r_{j}})_{\sharp}|\Sigma_{j}|\to W \qquad \text{and} \qquad \lim_{j}\frac{\mu_{j}(B_{r_{j}}(x))}{r_{j}^{n-2}}=0.
\]
Denoting by \(\delta W\)  the first variation with respect to the \emph{area functional} we get that
\[
\begin{split}
|\delta W|(B_{R})&\le
\liminf_{j} |\delta V_{j}|(B_{1})
=\liminf_{j}  r_{j}^{1-n}|\delta |\Sigma_{j}||(B_{r_{j}}(x))
\\
&\le  \limsup_{j}  r_{j}^{1-n}\int_{\Sigma_{j}\cap B_{r_{j}}(x)}|A_{\Sigma_{j}}| \le  C \limsup_{j} r_{j}^{1-n/2}\sqrt{\mu_{j}(B_{r_{j}}(x))}=0,
\end{split}
\]
where in the last inequality we have used H\"older inequality and the mass ratio upper bound.
We now consider \(W' \in T(0,W)\). By the classical monotonicity formula (recall that \(W\) is \emph{area-stationary}) we get that \(W'\) is a cone. Furthermore, by a diagonalization argument as the one above, we get that it can be obtained as limit of a suitable rescaling  of \(\Sigma_{k}\) with asymptotically vanishing second fundamental form. This implies that the regular part of \(W'\) is contained in the union of countably  many hyperplanes passing through the origin.\footnote{This follows from \(x\in T_{x}W'\) since \(W'\) is a cone}
We now claim that  \(\spt W'\) consists of a unique hyperplane. Note that this implies~\eqref{singset}, since this would imply that there is a hyperplane among the tangent varifolds to \(V\) at \(x\) and  thus the assumptions of  Theorem~\ref{reg}  are  satisfied for \(\Sigma_{k}\) a suitable small ball around \(x\).

To prove that the support consists of a single hyperplane we note that, by  the constancy lemma~\cite{Allard1972},  the regular part of \(W'\) coincides with the union of the hyperplanes  minus their intersection.  Monotonicity formula  and integrality implies  the hyperplanes are indeed finitely many. Let us take a point \(y\in \spt W\) such that all the hyperplanes passing through \(y\) intersects along a common subspace of dimension \(n-1\). Note that such a point necessarily exists (but it might not be the origin). If we take a further blow up at \(y\) we get that the tangent cone \(W''\) to \(W'\) at \(y\) is, after a rotation,  of the form
\[
W''=W_{1}\times \mathbb R^{n-1}
\]
where \(W_{1}\subset \mathbb R^{2}\) is a union of half-lines through the origin. Moreover, by a diagonal argument, \(W''\) can be obtained as limit of suitable rescaling and translation of the \(\Sigma_{k}\). The fact that \(W_{1}\) then consists of a single line can be proved by the same argument as in~\cite[Page 786-787]{SchoenSimon}, which is based on the \(L^{2}\) curvature bound, which follows from~\ref{useful1} and the mass ration upper-bound. As explained above, this concludes the proof of~\eqref{singset}.

We are now left to show that \(\mathcal S\) has  vanishing \(\mathcal H^{n-2}\) measure. This is again based on a capacity argument.  It would indeed be clearly enough to show it for
\[
\mathcal S_{\delta}=\Bigl\{ x\in \spt V: \limsup_{t \to 0}\frac{\mu(B_{t}(x))}{t^{n-2}}>\delta\Bigr\},
\]
for every \(\delta >0\). By a simple covering argument, compare with~\cite[Theorem 6.9]{Mattila}, one gets that for every \(K\subset \mathcal S_{\delta}\) and every open set containing \(K\):
\begin{equation}\label{eUK}
\mathcal H^{n-2}(K) \le C(n,\delta) \mu (U).
\end{equation}
In particular \(\mathcal H^{n-2}(K)\) is finite.
By arguing as in~\cite[Theorem 4.16]{EvansGariepy}, we can use this fact and the mass ratio bound of \(\Sigma_{k}\) to deduce that there exits a sequence of open neighborhood of \(V_{j}\) of \(K\) and of functions \(g_{j}\in C^{1}_{c}\) such that \(V_{j}\subset \{g_{j}\ge 1\) and
\[
\sup_{k}\int_{\Sigma_{k}} |\nabla g_{j}|^{2}+g^{2}_{j} =o_{j}(1),
\]
where \(o_{j}(1)\to 0\) as \(j\to \infty\). In particular, by~\eqref{useful1},
\[
\begin{split}
\mu(V_{j})&\le \liminf_{k} \mu_{k} (V_{j})\le  \liminf_{k} \int_{\Reg \Sigma_{k}}g_{j}^{2}|A_{\Reg \Sigma_{k}}|^{2}
\\
&\le C\sup_{k} \int_{\Sigma_{k}} |\nabla g_{j}|^{2}+g^2_{j}=o_{j}(1).
\end{split}
\]
By combining this with~\eqref{eUK}, we get that \(\mathcal H^{n-2}(K)=0\) for all \(K\subset \mathcal S_{\delta}\) and thus, by~\cite[Theorem 8.19]{Mattila} also \(\mathcal H^{n-2}(\mathcal S_{\delta})=0\), concluding the proof.
\end{proof}

\section{Proof of Lemma~\ref{lem:trig_number}}\label{sec:geometric}

\begin{proof}[Proof of Lemma~\ref{lem:trig_number}]
  Let $N_1$ be the number of simplexes in $T_0$. For each $\sigma \in T_0$, let $g_0$ be the Euclidean metric for which the \(g\)-length of the side of \(\sigma\) equals its \(g_{0}\)-length. We may assume that $\sigma$ is embedded in $\mathbb{R}^{n+1}$. Since $\varepsilon < 1$, $B_r(p) \cap \sigma$ has diameter at most $4r$ and thus, can be covered by a Euclidean ball $\mathbb{B}_{4r}(x)$ of radius $4r$ for some $x \in \mathbb{R}^{n+1}$. By definition, there are exactly $2^{(n+1)k}$ simplexes of $T_k$ in $\sigma$, each of which has Euclidean volume $c_{n} \left(2^{-k}\mu\right)^{n+1}$. Moreover, each simplex of $T_k$ in $\sigma$ that intersects $B_r(p)$ is contained in $\mathbb{B}_{8r}(x)$. Hence, the number of such simplexes is bounded above by
  \[
    \frac{\alpha_{n}(8r)^{n+1}}{c_{n} \left(2^{-k}\mu\right)^{n+1}} \leq \frac{\alpha_{n}\ 16^{n+1}}{c_{n}}\,,
  \]
  where $\alpha_{n}$ is the volume of a unit ball in $\mathbb{R}^{n+1}$. Therefore, we  set
  \[
  C \coloneqq N_1  \cdot \frac{\alpha_{n+1}\ 16^{n+1}}{c_{n+1}},
  \]
  and $B_r(p)$ intersects with at most $C$ simplexes in $T_k$.
\end{proof}

\printbibliography
\end{document}